\theoremstyle{plain}
\newtheorem{theorem}{Theorem}[section]
\newtheorem{proposition}[theorem]{Proposition}
\newtheorem{corollary}[theorem]{Corollary}
\newtheorem{lemma}[theorem]{Lemma}
\theoremstyle{definition}
\newtheorem{definition}[theorem]{Definition}
\newtheorem{example}[theorem]{Example}
\theoremstyle{remark}
\newtheorem{remark}[theorem]{Remark}
\numberwithin{equation}{section}
\numberwithin{figure}{section}
\def\mc{\mathcal}
\def\ms{\mathsf}
\def\Z{\mathbb Z}
\def\E{\mathbb E}
\def\ss{\subseteq}
\def\R{\mathbb R}
\def\es{\varnothing}
\def\Spec{\ms{Spec}}
\def\A{\mathbb A}
\def\lan{\langle}
\def\ran{\rangle}
\def\wt{\widetilde}
\def\wh{\widehat}
\def\udot{\sqcup}
\def\G{\mathbb G}
\def\ldot{.}
\def\Inv{\ms{Inv}}
\def\Oc{\mc O}
\def\res{\ms{res}}
\def\e{\epsilon}
\def\a{\alpha}
\def\La{\Lambda}
\def\b{\beta}
\def\ol{\overline}
\def\fmil{f^{\ms{Mil}}}
\def\lra{\leftrightarrow}
\def\Fk{\mc F_{k_0}}
\def\lan{\langle}
\def\ran{\rangle}
\renewcommand\le{\leqslant}
\renewcommand\ge{\geqslant}
\newcommand{\addresseshere}{
	 \enddoc@text\let\enddoc@text\relax
	 }
	 \def\Vsg{V^{\star\Gamma}}
	 \newcommand{\smb}{{\scriptscriptstyle \bullet}}
	 \newcommand{\CM}{M}        
	 \newcommand{\CMu}[1]{M_{#1, \ms{unr}}}        
	 \newcommand{\CK}{\mathrm{C}}         
	 \newcommand{\Fields}{\mc F }
	 \newcommand{\gk}{k_0}
\DeclareMathOperator{\Kt}{k} 
 \newcommand{\MK}{\Kt^{\ms M}}
\DeclareMathOperator{\Ker}{\mathrm{Ker}}
\keywords{Weyl groups, cohomological invariants, torsor, splitting principle}
\subjclass[2010]{20G10, 12G05}
\begin{document}

\author{Christian Hirsch}
\address[Christian Hirsch]{University of Groningen, Bernoulli Institute, Nijenborgh~9, 9747 AG Groningen, The Netherlands}
\email{c.p.hirsch@rug.nl}

\thanks{This work is supported by The Danish Council for Independent Research | Natural Sciences, grant DFF -- 7014-00074 \emph{Statistics for point processes in space and beyond}, and by the \emph{Centre for Stochastic Geometry and Advanced Bioimaging}, funded by grant 8721 from the Villum Foundation.}

\title[Mod two Invariants of Weyl groups]{On the decomposability of mod 2 cohomological invariants of Weyl groups}

\begin{abstract}
We compute the invariants of Weyl groups in mod 2 Milnor $K$-theory and more general cycle modules, which are annihilated by 2. Over a base field of characteristic coprime to the group order, the invariants decompose as direct sums of the coefficient module. All basis elements are induced either by Stiefel-Whitney classes or specific invariants in the Witt ring. The proof is based on Serre's splitting principle that guarantees detection of invariants on elementary abelian 2-subgroups generated by reflections.
\end{abstract}

\maketitle

\goodbreak
\section{Introduction}
\label{introSec}

%
%TOPOLOGICAL INTRO
%

\noindent

Let $G$ be a smooth affine algebraic group over a field $k_0$ of characteristic not 2. Motivated from the concept of characteristic classes in
topology, the idea behind \emph{cohomological invariants} as presented by J.-P.~Serre in \cite{CohInv} is to provide tools for detecting that two
torsors are not isomorphic. Loosely speaking, such an invariant assigns a value in an abelian group to an algebraic object, such as a quadratic form or an \'etale algebra. 

\smallbreak

The formal definition of a cohomological invariant is due to J.-P.~Serre and appears in his lectures \cite{CohInv}, where also a brief account
of the history of the subject is given. First, we identify the pointed set of isomorphism classes of $G$-torsors over a field $k$ with the first
non-abelian Galois cohomology $H^1(k, G)$. Further, let $M$ be a functor from the category $\Fields_{\gk}$ of finitely generated field extensions
of $\gk$, to abelian groups. Then, a \emph{cohomological invariant} of $G$ with values in the coefficient space $M$ is a natural transformation
from $H^1( -, G)$ to $M( - )$ considered as functors on $\Fields_{\gk}$. Interesting examples of the functor $M$ include Witt groups or Milnor
$K$-theory modulo $2$, which is the same as Galois cohomology with $\Z/2$-coefficients by Voevodsky's proof of the Milnor conjecture.

\smallbreak

In general, the cohomological invariants of a given algebraic group with values in some functor $M$ are hard to
compute and there are only a few explicit computations carried out yet. One exception are the cohomological invariants
of the orthogonal group over a field of characteristic not $2$ with values in Milnor $K$-theory modulo $2$. These invariants are generated by Stiefel-Whitney classes
$$
w_i:\, H^1(-, O_n) \to K^{\ms M}_i( - )/2
$$
introduced by Delzant \cite{De62}. Now, every finite group $G$ embeds in a symmetric group $S_n$ for an appropriate $n$,
and this group in turn embeds in $O_n$. Pulling back the Stiefel-Whitney classes along such homomorphisms $G \to S_n \to O_n$
is a rich source of cohomological invariants of finite groups considered as group scheme of finite type over a base field $\gk$.

\smallbreak

In this work, we show that most cohomological invariants of a Weyl group $G$ over a field $\gk$ of characteristic coprime to $|G|$ arise in this way if
the coefficient space is a cycle module $\CM_\ast$ in the sense of Rost~\cite{Ro96}, which is annihilated by~$2$. More
precisely, there exists a finite family of invariants $\{a_i\}_{i \in I}$ with values
in $K_\ast^M/2$, such that every invariant $a$ over $\gk$ with values in $\CM_\ast$ decomposes uniquely as
$$
a = \sum\limits_{i \in I}a_i m_i, 
$$
for some constant invariants $m_i \in \CM_\ast(\gk)$.
In characteristic 0, any Weyl group is a product of the irreducible ones mentioned above. Hence, invoking a product formula of J.-P.~Serre
yields the decomposition for cohomological invariants.

\smallbreak

The proof of this result is constructive, in the sense that we give precise formulas for the generators $\{a_i\}_{i \in I}$. For most Weyl groups
the invariants are induced by Stiefel-Whitney classes coming from embeddings of the Weyl group into certain orthogonal groups. Note that
these embeddings make use of the fact that such a Weyl group can be realized as orthogonal reflection group over every field of characteristic not~$2$.
However, if the Weyl group has factors of type $D_{2n}$, $E_7$ or $E_8$, then besides Stiefel-Whitney classes also specific Witt-type invariants appear,
which induce invariants in mod 2 Milnor $K$-theory via the Milnor isomorphism. All basis
elements are invariants derived from either the Stiefel-Whitney or the Witt-ring invariants.

\smallbreak

Crucial for the derivation is Serre's splitting principle for Weyl groups: if two invariants coincide on the elementary abelian $2$-subgroups generated by reflections, then these are the same. This allows the following proof strategy. Since Stiefel-Whitney classes and Witt invariants provide us with a family of invariants, we only have to show that a given invariant coincides on the elementary abelian subgroups with a combination
from this list. The invariants are then computed case by case for the various types.

\smallbreak

J.-P.~Serre has recently computed  with a different method the invariants of Weyl groups with values in Galois cohomology, see his 2018 Oberwolfach talk \cite{Se18}.
 In an e-mail exchange on an earlier version of the present paper, J.-P.~Serre explains how to remove many of the restrictions on the characteristic of $k_0$. An excerpt of his letter is reproduced in Section \ref{serreSec}.  J.~Ducoat provided a proof of Serre's splitting principle and attempted to compute the invariants for groups of type $B_n$ and $D_n$ \cite{Du11}. However, many proofs are incomplete as they are ``left to the reader'' or ``similar to previous ones''. Moreover, Theorem 5 on page 4 about the invariants of $W(D_n)$ is not correct as stated, because an invariant in degree $n/2$ is missing. Therefore, we provide detailed computations also for the types $B_n$ and $D_n$.

\medbreak

%%%OUTLINE
The content of this article is as follows. In Section \ref{resultsSect}, we state the main result and fix notations and conventions. Next, Section \ref{techLemSec} contains preliminary results. The proof of the main result occupies the rest of the paper. It also includes an appendix, elucidating how to use a {\tt GAP}-program to determine the invariants for $E_7$ and $E_8$.

\smallbreak

\bigbreak

\section*{Acknowledgments}
The present manuscript has a long history. It is a condensed version of my diploma thesis at LMU Munich supervised by F.~Morel. I am very grateful for his comments and insights that shaped this work in many ways. The thesis is available online and contains additional background material from algebraic geometry \cite{Hi10} as well as results for reflection groups that are not of Weyl type. Moreover, I thank S.~Gille for massive help and discussions on earlier versions of the manuscript. He was also the one to mention the thesis during a presentation of J.-P.~Serre at the 2018 Oberwolfach meeting. I am very grateful to J.-P.~Serre for a highly insightful e-mail exchange and for sharing with me an early version of his report \cite{Se18}. His remarks helped to both substantially raise the quality of the presentation, and also improve the contents such as removing restrictions on the characteristic in the present paper. Moreover, an earlier version also contained an irritating assumption that $-1$ be a square in $k_0$. Thanks to a more appropriate representation of $W(B_2)$ pointed out by J.-P.~Serre, also this assumption could be removed in the present version. Finally, I thank the anonymous referee for the careful reading of the manuscript and valuable observations that helped to improve the presentation.

\bigbreak

\bigbreak
\newpage
\begin{center}
{\bf\large
Part I: Results and methods.
}
\end{center}

\section{Main theorem and proof strategy}
\label{resultsSect}

\subsection{Cycle modules}
We consider in this work invariants with values in a cycle module $\CM_\ast$ in the sense of Rost, which is annihilated by $2$. Recall that a cycle module over a field $\gk$ is a covariant functor
$$
k\, \longmapsto\, \CM_\ast(k)\,: = \;\bigoplus\limits_{n \in \Z}\CM_n(k)
$$
on the category $\Fields_{\gk}$ with values in graded Milnor $K$-theory modules. For a field extension $\iota:k\ss L$, the image of $z \in \CM_\ast(k)$ in $\CM_\ast(L)$ is denoted by $\iota_\ast(z)$. By definition, cycle modules have further structure and we refer the reader to \cite{Ro96} for details.

The main example of a cycle module is Milnor $K$-theory: 
\begin{align*}
\Fk & \to \text{$\Z$-graded rings}\\
k &\mapsto K^{\ms M}_*(k) = \oplus_{n \ge 0} K^{\ms M}_n(k).
\end{align*}
For $a_1, \dots, a_n \in k^\times$, we denote pure symbols in $K^{\ms M}_n(k)$ by $\{a_1, \dots, a_n\}$.
The graded abelian group $\CM_\ast(k)$ has the structure of a graded $K^{\ms M}_\ast(k)$-module for every
field $k \in \Fields_{\gk}$. Hence, if $\CM_\ast$ is annihilated by 2, it becomes a $K^{\ms M}_\ast(k)/2$-module. For ease of notation, we set $\MK_\ast(k): = K^{\ms M}_\ast(k)/2$ and denote the image of a symbol $\{ a_1, \dots, a_n\} \in K^{\ms M}_n(k)$ in $\MK_n(k)$ by $\{a_1, \dots, a_n\}$. We say that
$\CM_\ast$ has a \emph{$\MK_\ast$-structure} if $\CM_\ast$ is annihilated by 2.

\bigbreak

\noindent
{\it
From now on cycle module means cycle module with $\MK_\ast$-structure.
}

\bigbreak

\subsection{Invariants with values in cycle modules}
Let $G$ and $\CM_\ast$ be a linear algebraic group and a cycle module over $\gk$, respectively.
Recall from Section \ref{introSec} that a \emph{cohomological invariant} of $G$ with values in $\CM_n$ is
a natural transformation from $H^1(\, -, G)$ to $\CM_n(\, -\, )$. We denote the set of all invariants of degree
$n$ of $G$ with values in $\CM_\ast$ by $\Inv^n(G, \CM_\ast)$, and set 
$$\Inv(G, \CM_\ast):= \Inv_{\gk}(G, \CM_\ast):= \bigoplus\limits_{n \in \Z}\Inv^n(G, \CM_\ast).$$ 

\smallbreak

For $k \in \Fields_{\gk}$, any invariant $a \in \Inv_{\gk}(G, \CM_\ast)$ restricts to a natural transformation of functors
$H^1(\, -, G) \to \CM_\ast(\, -\, )$ on the full sub-category $\Fields_k$ of $\Fields_{\gk}$. We denote this restricted invariant
by $\res_{k/\gk}(a)$ or by the same symbol $a$ if the meaning is clear from the context.
A particular example of invariants are the \emph{constant invariants}, which are in one-to-one correspondence with elements of $\CM_\ast(\gk)$:
The constant invariant $c \in \CM_\ast(\gk)$ maps every $x \in H^1(k, G)$ onto the image of $c$ in $\CM_\ast(k)$ for all $k \in \Fields_{\gk}$. 
The set $\Inv(G, \CM_\ast)$ is a $\MK_\ast(\gk)$-module, so that  if $a:\, H^1(\, -, G) \to \MK_\ast(\, -\, )$ is a Milnor $K$-theory invariant of degree $m$ and $x \in \CM_n(\gk)$, then
$$
a \cdot x\,:\; H^1(k, G)\, \to \, \CM_{m + n}(k), \; T\, \mapsto a_k(T) x_k
$$
is an invariant with values in $\CM_\ast$ of degree $m + n$.
We now define precisely what it means that an invariant can be represented uniquely as a sum of basis elements.

\begin{definition}
\label{decomposableDef}
Let $\CM_\ast$ be a cycle module over the field $\gk$, and $G$ a linear algebraic group over $\gk$.

\smallbreak

\begin{itemize}
	\item[(i)]
		A subgroup $S\ss\Inv_{\gk}^{\ast}(G,\CM_{\ast})$ is a \emph{free $\CM_{\ast}(\gk)$-module} 		with \emph{basis} $a^{(i)} \in \Inv^{d_i}_{\gk}(G, \MK_{\ast})$, $i\in I$, if
		\begin{align*}
			\bigoplus_{i\in I}\CM_{*- d_i}(\gk) \to S, \qquad\qquad
			 \{m_i\}_{i \in I} \mapsto \sum_{i \le r} a^{(i)}\cdot m_i
		\end{align*}
		is an isomorphism of abelian groups.
\smallbreak

\item[(ii)]
$\Inv(G, \CM_\ast)$ is \emph{ completely decomposable} with a finite {basis} $a_i \in \Inv^{d_i}_{\gk}(G, \MK_\ast)$ if $\Inv_k^\ast(G, \CM_\ast)$ is a free $\CM_\ast(k)$-module with the corresponding basis $\res_{k/\gk}(a_i) \in \Inv_k^{d_i}(G, \MK_\ast)$, $i \in I$, for all $k \in \Fields_{\gk}$.
\end{itemize}
\end{definition}

\smallbreak

After these preparations, we now state the main result.

\begin{theorem}
\label{reflThm}
Let $G$ be an irreducible Weyl group. Let $\gk$ be a field of characteristic coprime to $|G|$ and $\CM_\ast$ a cycle
module over $\gk$. Then, $\Inv^\ast_{\gk}(G, \CM_\ast)$ is completely decomposable.
\end{theorem}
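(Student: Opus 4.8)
The plan is to produce an explicit finite family of $\MK_\ast$-valued invariants and then show that, after restriction to any $k \in \Fields_{\gk}$, this family is simultaneously $\CM_\ast(k)$-generating and $\CM_\ast(k)$-linearly independent; by Definition~\ref{decomposableDef}(ii) this is precisely complete decomposability. The engine driving both halves is Serre's splitting principle, which asserts that the restriction map
\[
\Inv_{\gk}(G, \CM_\ast) \;\longrightarrow\; \prod_{E} \Inv_{\gk}(E, \CM_\ast),
\]
with $E$ ranging over the elementary abelian $2$-subgroups of $G$ generated by reflections, is injective. Thus I never argue with $G$ directly: everything is transported to the subgroups $E$, whose torsors are split by Kummer theory and whose invariants are therefore fully understood.

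First I would fix the standard realization of $G$ as an orthogonal reflection group on its root space, which is available over every field of characteristic $\neq 2$; this yields embeddings $G \hookrightarrow S_N \hookrightarrow O_N$ and hence pulled-back Stiefel-Whitney classes $w_i\colon H^1(-,G) \to \MK_i(-)$, my primary candidate invariants. For the types carrying ``extra'' invariants, namely $D_{2n}$, $E_7$ and $E_8$, the Stiefel-Whitney classes do not suffice, so I would additionally construct specific invariants in the mod $2$ Witt ring (equivalently, via the graded Milnor isomorphism, in $\MK_\ast$) attached to a distinguished quadratic form built from the reflection representation. Assembling these produces a candidate family $\{a_i\}_{i \in I}$, which I would write down explicitly for each given irreducible type.

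The heart of the argument is, type by type, the determination of the image of the splitting-principle map and the verification that the candidate invariants restrict to a basis of that image. For an elementary abelian group $E \cong (\Z/2)^r$ the set $H^1(k,E) = (k^\times/k^{\times 2})^r$ is split, and $\Inv_k(E, \CM_\ast)$ is a free $\CM_\ast(k)$-module on the square-free monomials in the degree-one Kummer invariants; here the hypotheses $\charr\,\gk \nmid |G|$ and $-1 \in \gk^{\times 2}$ enter decisively, the latter forcing $\{t\}^2 = \{t,-1\} = 0$ so that the monomials are genuinely square-free and no characteristic-dependent correction terms survive. I would then classify the $G$-conjugacy classes of reflection subgroups $E$ together with the induced normalizer action, identifying the image of restriction with the submodule of families $(a_E)_E$ that are compatible under inclusions $E' \subseteq E$ and invariant under this action. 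Matching that submodule against the span of $\{w_i\}$, supplemented by the Witt invariants, delivers both spanning and independence.

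The main obstacle is the case-by-case nature of this last step, and in particular the exceptional types. For $A_n$, $B_n$ and $D_n$ the reflection subgroups and their normalizers are combinatorially transparent through signed-permutation patterns, but for $E_7$ and $E_8$ the enumeration of conjugacy classes of elementary abelian reflection subgroups is large enough that I expect to delegate it to the \texttt{GAP} computation described in the appendix. Two further subtleties demand care: first, showing that the genuinely new Witt invariants for $D_{2n}$, $E_7$, $E_8$ are not already expressible through Stiefel-Whitney classes, which is where independence is most fragile; and second, the anomalous type $G_2$, whose invariants are not all induced from Stiefel-Whitney or Witt invariants and which must be handled by an explicit description of its reflection subgroups. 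Finally, because complete decomposability is asserted over every $k \in \Fields_{\gk}$, I would check that each construction and each identification above is functorial in the base field, so that restriction to $k$ carries the basis to a basis.
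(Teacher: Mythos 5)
Your proposal follows essentially the same route as the paper: Serre's splitting principle to reduce everything to elementary abelian $2$-subgroups generated by reflections, pulled-back Stiefel-Whitney classes from the reflection representation plus extra Witt-ring invariants (converted to $\MK_\ast$ via the Milnor isomorphism) for $D_{2n}$, $E_7$, $E_8$, a type-by-type matching of the image of restriction against normalizer-invariant compatible families, computer algebra for the exceptional types, and a separate argument for $G_2$. The only divergences are points of detail: in the paper the maximal reflection $2$-subgroup of $E_7$/$E_8$ is unique up to conjugacy, and the {\tt GAP} computations instead verify the orbit structure of certain coset actions needed to show the extra Witt invariants land in $I^3$ resp.\ $I^4$ and to compute their restrictions, while $G_2$ is dispatched by the semidirect splitting $W(G_2)\simeq P\ltimes U$ rather than by any enumeration of its reflection subgroups.
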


The proof of Theorem \ref{reflThm} is constructive and we describe the generators explicitly. These depend on the type of the Weyl group and will be given in the course of the computation later on. Now, we explain the strategy starting with a reminder on Weyl groups. 

Let $\E$ be a finite-dimensional real vector space with scalar product $( -, - )$ and orthogonal group $O(\E)$. Then, $s_v:\E \to\E$,
\begin{align*}
	s_v(w):= w - \frac{2(v, w)}{(v, v)} v, 
\end{align*}
defines the reflection at a vector $v \in \E$ with $(v, v) \ne 0$.

\smallbreak

Now, the \emph{Weyl group} $W(\Sigma)$ associated with a crystallographic root system $\Sigma\ss\E$ is the subgroup of $O(\E)$ generated by all reflections $s_\a$ at the roots $\a \in \Sigma$. By definition of a root system, the scalars ${2(\a, \b)}/{(\a, \a)}$ are integers for all $\a, \b \in \Sigma$ and the reflections act on the root system. The Weyl group is {\it irreducible} if the corresponding root system is irreducible. 

\smallbreak

The irreducible root systems are classified by types $A_n, B_n, C_n, D_n, E_6, E_7$, $E_8$, $F_4, G_2$. Let $\Sigma$ be such an irreducible root system. Then, there exists an Euclidean space $\E=\R^n$ for an appropriate $n$, such that (i) $\Sigma\ss V:=\bigoplus\limits_{i \le n}\Z [1/2]e_i$, where $e_1,\ldots ,e_n$ is the standard basis of $\R^n$, and (ii) $W(\Sigma)$ maps $V$ into itself. This can be deduced using the realizations of these root systems in Bourbaki \cite[PLATES I-VIII]{LIE4-6}. If now $\gk$ is a field of characteristic not $2$ then $W(\Sigma)$ acts via scalar extension on $V_{\gk}:=\gk\otimes_{\Z [1/2]}V$ and can so be realized as orthogonal reflection group over $\gk$ considering $V_{\gk}$ has regular bilinear space with the scalar product induced by the restriction of the standard scalar product of $\E = \R^n$ to $V$.

\medbreak

The strategy of proof for an irreducible Weyl group $G$, is as follows. We leverage different embeddings of the Weyl group $G$ into an orthogonal group $O_n$ over the field $\gk$. Now, the invariants of $O_n$ with values in $\MK_\ast$ are generated by the Stiefel-Whitney classes, see \cite{CohInv}. Considering embeddings $W\hookrightarrow O_n$ gives rise to a family of invariants in $\Inv(G, \MK_\ast)$ by composing the Stiefel-Whitney classes with the natural transformation $H^1( -, W) \to H^1( -, O_n)$. As we shall see in Sections \ref{BnSubSect} -- \ref{E6-8SubSect}, these already generate $\Inv(G, \CM_\ast)$ except if $G$ is of type $D_{2n}$, $E_7$, or $E_8$. The 'missing' invariants have their source in certain Witt invariants. 

\medbreak

Having a family of invariants with values in $\MK_\ast$ at our disposal, we deduce Theorem \ref{reflThm} for an irreducible Weyl group $G$ by showing that this set of invariants contains a basis of $\Inv(G, \CM_\ast)$ in the sense of Definition \ref{decomposableDef}. The main tool is the following adaptation of Serre's splitting principle, which is proven in \cite[Corollary 4.10]{GiHi19}.	Loosely speaking, if $\gk$ is a field of characteristic coprime to $|G|$, then $\Inv(G, \CM_\ast)$ is detected by the maximal elementary abelian $2$-subgroups of $G$ generated by reflections.  We let $\Omega(G)$ denote the set of conjugacy classes of maximal elementary $2$-abelian subgroups of $G$, which are generated by reflections.

\smallbreak

Note that the proof of Theorem \ref{reflThm} for Weyl groups of type $G_2$ in Section \ref{G2InvSubSect} is purely group theoretic, in the sense that it uses only its semi-direct decomposition and not the geometry of the corresponding root system.

\smallbreak
%
%SPLITTING PRINCIPLE
%
\begin{proposition}[Serre's splitting principle]
\label{splitCorollary}
Let $\CM_\ast$ be a cycle module over $\gk$ and $G$ be a Weyl group. Let $\gk$ be a field of characteristic coprime to $|G|$.
 Then, the canonical map
\begin{align*}
	\big({\ms{res}^P_G}\big)_{[P]}:\, \Inv(G, \CM_\ast)& \to \prod_{[P] \in \Omega(G)} \Inv(P, \CM_\ast)^{N_G(P)}
\end{align*}
is injective, where $N_G(P)$ is the normalizer of the maximal elementary $2$-abelian subgroup $P$ of $G$, which is generated by reflections.
\end{proposition}

\smallbreak

\noindent
We point out that the assumption that order of the irreducible Weyl group $G$ and the characteristic of $\gk$ are coprime seems to be not necessary, see Section~\ref{serreSec}. This assumption comes from the article \cite{GiHi19}, where the splitting principle is proven for more general orthogonal reflection groups. This would also remove that assumption from Theorem \ref{reflThm}.

\medbreak

\begin{remark}
\label{splittingPrincipleRem}
For groups of type $A_n$, $D_n$, $E_6$, $E_7$, or $E_8$, any two roots are conjugate \cite[Rem.\ 4, Sect.\ 2.9]{Hu90}.
Hence, an induction argument shows that for these types, there is up to conjugacy only one maximal abelian $2$-subgroup $P$ generated by reflections.
In particular, by Proposition \ref{splitCorollary}, the restriction map $\ms{res}_G^P$ is injective for simply-laced groups.
\end{remark}

\medbreak

The computation of the invariants of an arbitrary Weyl group follows from Theorem~\ref{reflThm} by a product formula of Serre.
To state the product formula precisely, we first introduce the notion of a product of invariants. Identifying $H^1(k, G'\times G)$
with $ H^1(k, G') \times H^1(k, G)$, for invariants $a \in \Inv_{\gk}(G, \MK_\ast)$ and $b \in \Inv_{\gk}(G', \CM_\ast)$, we define the product $ab$ through 
\begin{align*}
	(ab)_k:\,H^1(k, G\times G') &\to \CM_\ast(k)\\ 
	(T, T')&\mapsto a_k(T) b_k(T').
\end{align*}
\begin{proposition}[Product formula]
\label{productLem}
Let $\CM_\ast$ be a cycle module and $G, G'$ algebraic groups over $\gk$. If $\Inv^\ast_{\gk}(G, \CM_\ast)$ is completely decomposable with finite basis $\{a_i\}_{i \in I}$, then the map 
	\begin{align*} 
\bigoplus_{i \in I}\Inv^\ast_k(G', \CM_\ast) & \to \Inv^\ast_k(G \times G', \CM_\ast)\\
\{b_i\}_{i \in I } &\mapsto \sum_{i \in I }\res_{k/\gk}(a_i) b_i
\end{align*}
is an isomorphism for all $k \in \Fields_{\gk}$. In particular, if the invariants of both $G$ and $G'$ are completely decomposable, then so is $\Inv^\ast_{\gk}(G \times G', \CM_\ast)$.
\end{proposition}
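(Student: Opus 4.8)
The plan is to prove that the stated map is both injective and surjective for every $k \in \Fields_{\gk}$, the essential tool being \emph{partial evaluation}: freezing the $G'$-component of a torsor turns an invariant of $G \times G'$ into an invariant of $G$, to which the freeness hypothesis applies. Throughout I fix $k$, abbreviate $\res_{k/\gk}(a_i)$ simply by $a_i$, and use the canonical identification $H^1(-, G \times G') = H^1(-, G) \times H^1(-, G')$, so that an invariant of $G\times G'$ is the same datum as a rule assigning to each pair $(T, T')$ of torsors a value in $\CM_\ast$, natural in the base field.

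For injectivity, suppose $\sum_{i} a_i\, b_i = 0$ with $b_i \in \Inv_k(G', \CM_\ast)$. Fix $L \in \Fields_k$ and $T' \in H^1(L, G')$, and set $m_i := b_i(T') \in \CM_\ast(L)$. Evaluating the vanishing relation at an arbitrary pair $(T, T'_F)$ with $F \in \Fields_L$ and $T \in H^1(F, G)$, and using the naturality of $b_i$ to rewrite $b_i(T'_F) = \res_{F/L}(m_i)$, I find that the invariant $\sum_i (a_i)_L\, m_i \in \Inv_L(G, \CM_\ast)$ vanishes identically. Since $\Inv_L(G, \CM_\ast)$ is a \emph{free} $\CM_\ast(L)$-module with basis $\{(a_i)_L\}$, all $m_i = 0$; as $L$ and $T'$ were arbitrary, every $b_i = 0$.

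For surjectivity, let $c \in \Inv_k(G \times G', \CM_\ast)$ be given. For each $L \in \Fields_k$ and each $T' \in H^1(L, G')$, the rule $T \mapsto c_F(T, T'_F)$ (for $F \in \Fields_L$ and $T \in H^1(F, G)$) defines an invariant $c_{T'} \in \Inv_L(G, \CM_\ast)$, its naturality being inherited from that of $c$. Freeness over $L$ lets me write $c_{T'} = \sum_i (a_i)_L\, \beta_i(T')$ with uniquely determined coordinates $\beta_i(T') \in \CM_\ast(L)$, and I set $b_i(T') := \beta_i(T')$. Taking $L = F$ and comparing values at $(T, T')$ immediately yields $c = \sum_i a_i\, b_i$, so it only remains to verify that each $b_i$ is genuinely an invariant of $G'$. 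This is the crux: for a field extension $L \subseteq L'$ one checks directly from the definitions that $c_{T'_{L'}} = \res_{L'/L}(c_{T'})$, and restricting the decomposition of $c_{T'}$ to $L'$ then exhibits two expansions of $c_{T'_{L'}}$ in the free module $\Inv_{L'}(G, \CM_\ast)$; \emph{uniqueness} of the coordinates forces $\beta_i(T'_{L'}) = \res_{L'/L}(\beta_i(T'))$, which is precisely the naturality of $b_i$.

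For the final assertion, if $G'$ too is completely decomposable with $\MK_\ast$-valued basis $\{a'_j\}$, then substituting the free decomposition $b_i = \sum_j a'_j\, m_{ij}$ into $c = \sum_i a_i b_i$ expresses every invariant of $G \times G'$ uniquely as $\sum_{i,j}(a_i a'_j)\, m_{ij}$; since the products $a_i a'_j$ are again $\MK_\ast$-valued and the argument runs over every $k \in \Fields_{\gk}$, the group $G \times G'$ is completely decomposable with basis $\{a_i a'_j\}$. The main obstacle in the whole argument is exactly the naturality of the coefficient invariants $b_i$: it is the \emph{uniqueness} half of the freeness hypothesis—crucially assumed over \emph{all} fields $L$, not merely over $k$—that allows the partial-evaluation coordinates to glue into a well-defined invariant of $G'$.
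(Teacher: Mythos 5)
Your proof is correct and follows essentially the same route as the paper's: partial evaluation of the $G\times G'$-invariant at a frozen $G'$-torsor, unique decomposition in the free module $\Inv_L(G,\CM_\ast)$, and the uniqueness of the coordinates (over every field $L$, not just $k$) to establish naturality of the coefficient invariants $b_i$, with the same evaluation argument for injectivity. The only difference is cosmetic: the paper first reduces to $k=\gk$ and leaves the final ``in particular'' clause implicit, whereas you keep $k$ fixed throughout and verify that clause explicitly via the basis $\{a_i a'_j\}$.
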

\begin{proof}
	We follow the outline given in \cite[Part I, Exercise 16.5]{CohInv}. Replacing $a_i$ by $\res_{k/\gk}(a_i)$ we can assume $k = \gk$.

\smallbreak

To show surjectivity, let $a \in \Inv^\ast_{\gk}(G \times G', \CM_\ast)$. Then, for every $k \in \Fk$ and $T' \in H^1(k, G')$ we define an invariant $\bar a \in \Inv^\ast_k(G, \CM_\ast)$ by mapping $T \in H^1(\ell, G)$ to $\bar a_\ell(T) = a_\ell(T \times T'_\ell)$, where, $T'_\ell$ denotes the image of $T'$ in $H^1(\ell, G')$ under the base change map. Since $\Inv(G, \CM_\ast)$ is completely decomposable, $\bar a$ can be uniquely expressed as $\sum_i \res_{k/\gk}(a_i) b_i(T')$ for suitable $b_i(T') \in \CM_\ast(k)$. It remains to prove that $b_i \in \Inv(G', \CM_\ast)$ for all $i$. To achieve this goal, let $\iota:k\ss k_1$ be a field extension in $\Fields_{\gk}$ and $T' \in H^1(k, G')$. Then,
$$
\iota_\ast\Big(\sum\limits_{i \in I}\res_{k/\gk}(a_i)(T) b_i(T')\Big) = 
\sum\limits_{i \in I}\res_{k_1/\gk}(a_i)(T_{k_1}) b_i(T'_{k_1}).
$$
 Since $a_i$'s are invariants
$$
\sum\limits_{i \in I}\res_{k_1/\gk}(a_i) \iota_\ast(b_i(T'))\, = \, \sum\limits_{i \in I}\res_{k_1/\gk}(a_i) b_i(T'_{k_1}).
$$
As the $a_i$'s are a basis we get $b_i(T'_{k_1}) = \iota_\ast(b_i(T'))$, as asserted.

\smallbreak

To show injectivity, we assume $\sum\limits_{i \in I}a_ib_i = 0$ and claim that $b_i = 0$ for all $i \in I$. Fix a field $k$ and $T' \in H^1(k, G')$.
Then $\sum\limits_{i \in I}a_ib_i(T') \in \Inv^\ast_k(G, \CM_\ast)$ is the constant zero invariant. Since the $a_i$'s are a basis, we get $b_i(T') = 0$
for all $i \in I$. Since $k$ and $T'$ were arbitrary, this implies that the $b_i$'s are constant zero. 
\end{proof}

\smallbreak

Since every Weyl group is a product of irreducible ones, we get the following corollary.
\begin{corollary}
Let $\gk$ be a field of characteristic coprime to $|G|$ and $\CM_\ast$ a cycle module over $\gk$. Then, $\Inv^\ast_{\gk}(G, \CM_\ast)$ is completely decomposable for all Weyl groups $G$.
\end{corollary}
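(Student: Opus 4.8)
The plan is to reduce the statement to the irreducible case settled in Theorem~\ref{reflThm} and then propagate complete decomposability across products using the product formula of Proposition~\ref{productLem}. The starting point is the standard structural fact from the theory of root systems: every root system $\Sigma$ decomposes uniquely into mutually orthogonal irreducible components $\Sigma = \Sigma_1 \amalg \cdots \amalg \Sigma_r$, and the associated Weyl group factors accordingly as a direct product $W(\Sigma) = W(\Sigma_1) \times \cdots \times W(\Sigma_r)$. Thus I would first record that an arbitrary Weyl group $G$ is a finite direct product $G = G_1 \times \cdots \times G_r$ of irreducible Weyl groups, each $G_j$ being the Weyl group of an irreducible root system of one of the types $A_n$, $B_n$, $C_n$, $D_n$, $E_6$, $E_7$, $E_8$, $F_4$, $G_2$.

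The only type appearing in this list but absent from Theorem~\ref{reflThm} is $C_n$. Here I would invoke that $B_n$ and $C_n$ are dual root systems whose reflections coincide, since a coroot is a scalar multiple of the corresponding root and $s_{\a} = s_{c\a}$ for any nonzero scalar $c$; hence $W(C_n) = W(B_n)$ as subgroups of $O(\E)$, and every irreducible factor $G_j$ is isomorphic to a Weyl group of one of the types covered by Theorem~\ref{reflThm}. Next I would check that the hypotheses of that theorem descend to each factor: since $|G| = \prod_j |G_j|$, a base field $\gk$ of characteristic coprime to $|G|$ has characteristic coprime to every $|G_j|$, while the condition $-1 \in \gk^{\times2}$ is a property of $\gk$ alone and is inherited by each factor. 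Theorem~\ref{reflThm} then applies to each $G_j$, yielding that $\Inv^\ast_{\gk}(G_j, \CM_\ast)$ is completely decomposable with a finite basis of $\MK_\ast$-valued invariants.

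Finally I would conclude by induction on $r$. The ``in particular'' clause of Proposition~\ref{productLem} states exactly that complete decomposability is stable under the formation of a product of two groups; applying it iteratively to $G_1 \times (G_2 \times \cdots \times G_r)$, starting from the trivial base case $r = 1$, shows that $\Inv^\ast_{\gk}(G, \CM_\ast)$ is completely decomposable.

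There is no serious obstacle here, as the entire substantive content has already been absorbed into Theorem~\ref{reflThm} and Proposition~\ref{productLem}; what remains is essentially bookkeeping. The only points requiring a word of care are the identification $W(C_n) = W(B_n)$, which lets the $B_n$ computation subsume the $C_n$ type, and the verification that coprimality of the characteristic with $|G|$ transfers to each irreducible factor --- both of which are immediate.
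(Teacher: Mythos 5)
Your proposal is correct and follows exactly the paper's route: the paper derives this corollary in one sentence from the decomposition of a Weyl group into irreducible factors together with the product formula of Proposition~\ref{productLem}. Your additional remarks --- that $W(C_n)=W(B_n)$ handles the one type missing from Theorem~\ref{reflThm}, and that coprimality of the characteristic passes to each factor --- are exactly the routine details the paper leaves implicit.
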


\goodbreak
\section{Preparations for the proof}
\label{techLemSec}

In this section, we establish several key lemmas on cycle modules. We also discuss auxiliary results used in the type-by-type proof of Theorem \ref{reflThm} for irreducible Weyl groups.

\smallbreak

\subsection{Cycle complex computations.}
We start with a computation of cycle module cohomology which seems to be well known, but for which we have not found an appropriate reference. To this end, we recall first the cycle complex associated with a cycle module $M_\ast$ over $\gk$. We refer the reader to Rost \cite{Ro96} for further details. 

\smallbreak

Let $X$ be a scheme essentially of finite type over $\gk$. That is, $X$ is of finite type over $\gk$ or the localization of such a $\gk$-scheme. Then, the \emph{cycle complex} is given by
$$
\bigoplus\limits_{x \in X^{(0)}}\CM_n(\gk (x)) \xrightarrow{d^0_{X, n}} \bigoplus\limits_{x \in X^{(1)}}\CM_{n - 1}(\gk (x)) 
\xrightarrow{d^1_{X, n}} \bigoplus\limits_{x \in X^{(2)}}\CM_{n - 2}(\gk (x))\to \cdots, 
$$
where $X^{(p)}\ss X$ denotes the set of points of codimension $p \ge 0$ in $X$ and $\gk(x)$ is the residue field of $x \in X$. In general, the differentials $d^p_{X, n}$ are sums of composition of second residue maps and transfer maps. If $X$ is an integral scheme with function field $\gk (X)$ and regular in codimension 1, then the components of $d^0_{X, n}$ are the \emph{second residue maps} $\partial_x:\,\CM_n(\gk (X)) \to \CM_{n - 1}(\gk (x))$. In particular, the cohomology group in dimension 0, also called \emph{unramified cohomology} of $X$ with values in $\CM_n$, equals
$$
\CMu n(X):= \Ker\Big( \CM_n(\gk(X))\xrightarrow{\:(\partial_{x})_{x \in X^{(1)}}\;}
\bigoplus\limits_{x \in X^{(1)}}\CM_{n - 1}(\gk (x))\, \Big).
$$
 In case $X = \Spec(R)$, we use affine notations and write $\CM_{n, \ms{unr}}(R)$ instead of $\CMu n(X)$.

\smallbreak

\begin{lemma}
\label{cheeseLine}
Let $\CM_\ast$ be a cycle module over $k_0$ and $R$ a regular and integral $k_0$-algebra with fraction field $K$, which is essentially of finite type. Let $a_1, \dots, a_l \in R$ be such that $a_i - a_j \in R^\times$ for all $i \ne j$. Then, 
$$
	\CMu n(R[T]_{\prod\limits_{i \le l }(T - a_i)}) \simeq\, \CMu n(R)\;\oplus\; 	\bigoplus\limits_{i \le l}\{ T - a_i\} \cdot\CMu {n - 1}(R), 
$$
	where we consider $\{ T - a_i\}$ as an element of $K_1^{\ms M}(K(T))$ and $\CMu{n - 1}(R)$ as a subset of $\CM_{n - 1}(K(T))$.
\end{lemma}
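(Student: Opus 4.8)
The plan is to realise $R[T]_{\prod_{i\le l}(T-a_i)}$ as the ring of functions on the open complement $U := \A^1_R \setminus Z$, where $Z := \bigcup_{i\le l}V(T-a_i)\subseteq \A^1_R = \Spec(R[T])$, and to compute $\CMu n(U)$ from the cycle complex together with Rost's homotopy invariance. The hypothesis $a_i - a_j \in R^\times$ enters twice and is essential. First, it guarantees that the sections $Z_i := V(T-a_i)\cong \Spec(R)$ are pairwise disjoint, so that $Z = \coprod_{i\le l}Z_i$ and each generic point $\eta_i$ of $Z_i$ is a codimension-one point of $\A^1_R$ with residue field $\kappa(\eta_i)=K$. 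Second, it ensures that for $i\ne j$ the function $T-a_i$ is a unit in the local ring at $\eta_j$, with residue the unit $a_j-a_i$.

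First I would identify the three groups in play. Since $R$ is regular and integral, $\CMu n(R)=A^0(\Spec R,\CM_n)$, and by homotopy invariance $A^0(\A^1_R,\CM_n)\simeq \CMu n(R)$; this supplies the first summand, via pullback along $U\hookrightarrow \A^1_R\to \Spec R$, as the $T$-constant classes inside $\CM_n(K(T))$. Likewise $A^0(Z,\CM_{n-1})=\bigoplus_{i\le l}\CMu{n-1}(R)$ because each $Z_i\cong\Spec R$. The codimension-one points of $U$ are precisely those of $\A^1_R$ other than the $\eta_i$, so $\CMu n(U)$ is the subgroup of classes $\alpha\in\CM_n(K(T))$ that are unramified everywhere on $\A^1_R$ except possibly along the divisors $Z_i$. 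Taking residues along these divisors gives $\partial=(\partial_{\eta_i})_{i\le l}\colon \CMu n(U)\to\bigoplus_{i\le l}\CM_{n-1}(K)$, whose kernel is exactly $\CMu n(\A^1_R)\simeq\CMu n(R)$.

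The heart of the argument is then to show that $\partial$ surjects onto $\bigoplus_{i\le l}\CMu{n-1}(R)$ and admits the explicit splitting $\{z_i\}_i\mapsto\sum_{i\le l}\{T-a_i\}\cdot z_i$. Two residue computations, using Rost's residue rules, do the work. For $z\in\CMu{n-1}(R)$ the class $\{T-a_i\}\cdot z$ is unramified at every codimension-one point of $U$ (there $T-a_i$ is a unit and $z$ is unramified), hence lies in $\CMu n(U)$; and along the divisors one finds $\partial_{\eta_j}(\{T-a_i\}\cdot z)=\delta_{ij}\,z$. Indeed, for $i=j$ the function $T-a_j$ is a uniformizer and the residue is the specialization of $z$, which equals $z$ since $\kappa(\eta_j)=K$; for $i\ne j$ the function $T-a_i$ is a unit at $\eta_j$, so the residue of $\{\text{unit}\}\cdot(\text{unramified})$ vanishes. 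This shows at once that $\partial$ is surjective and that the displayed map is a section, whence the sequence $0\to\CMu n(R)\to\CMu n(U)\xrightarrow{\partial}\bigoplus_{i\le l}\CMu{n-1}(R)\to 0$ splits, yielding the asserted decomposition.

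I expect the main obstacle to be bookkeeping rather than conceptual, namely verifying that the residue map really takes values in the \emph{unramified} subgroup $\bigoplus_{i\le l}\CMu{n-1}(R)$, i.e.\ that $\partial_{\eta_i}(\alpha)$ is killed by every further residue inside $Z_i$. This I would handle directly from $d\circ d=0$ in the cycle complex: for $\alpha\in\CMu n(U)$ and a codimension-one point $w$ of $Z_i$ (a codimension-two point of $\A^1_R$), the only codimension-one point of $\A^1_R$ specialising to $w$ at which $\alpha$ may ramify is $\eta_i$ itself, since the $Z_j$ are disjoint; hence the $w$-component of $d^1 d^0(\alpha)$ reduces to $\partial^{Z_i}_w(\partial_{\eta_i}(\alpha))=0$. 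The remaining external input is homotopy invariance for the affine line, which I would cite from \cite{Ro96} to identify $\Ker(\partial)$ with $\CMu n(R)$.
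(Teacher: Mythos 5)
Your proof is correct and is essentially the paper's argument: the paper packages your residue maps, the disjointness of the divisors $V(T-a_i)$ (via the Chinese remainder theorem), and your $d\circ d=0$ verification into the localization short exact sequence of cycle complexes, then applies the same homotopy invariance and the identical explicit splitting $(z_i)_i \mapsto \sum_{i\le l}\{T-a_i\}\cdot z_i$. What you check by hand — that the kernel of the residue maps is $\CMu n(R)$ and that the residues land in the unramified subgroup — is exactly what the associated long exact cohomology sequence delivers formally.
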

\begin{proof}
Setting $f(T): = \prod\limits_{i \le l }(T - a_i)$, we consider the following short exact sequence of cycle complexes, where for a cohomological complex $P^{\smb}$ we denote by $P^{\smb}[1]$ the shifted complex with $P^i$ in degree $i + 1$:
$$
\xymatrix{
\CK^{\smb}(R[T]/R[T] \cdot f(T), \CM_{n - 1})[1]\;\; \ar@{>->}[r] & \CK^{\smb}(R[T], \CM_n) \ar@{->>}[r] &
 \CK^{\smb}(R[T]_{f(T)}, \CM_n).
}
$$
Using homotopy invariance, the associated long exact cohomology sequence starts with
$$
	0 \to \CMu n(R) \to \CMu n(R[T]_{f(T)}) \to \CMu {n - 1}(R[T]/R[T] \cdot f(T)).
$$
We claim that the map on the right-hand side of this exact sequence is a split surjection. Indeed, by the Chinese remainder theorem, $$
	R[T]/R[T] \cdot f(T)\, \simeq\, \prod\limits_{i \le l} R[T]/R[T] \cdot (T - a_i)\, \simeq\, \prod\limits_{i \le l}R, 
$$
	so that $\CMu{n - 1}(R[T]/R[T] \cdot f(T))\simeq\CMu{n - 1}(R)^{\oplus\, l}$. Disentangling the definitions of the appearing maps shows that 
$$
		\CMu {n - 1}(R)^{\oplus l} \to \CMu n (R[T]_{f(T)}),\quad 
		(x_1, \dots, x_l) \longmapsto \sum\limits_{i \le l}\{T - a_i\} x_i
	$$
	defines the asserted splitting.
	\end{proof}

	\medbreak

	By induction and homotopy invariance, Lemma \ref{cheeseLine} implies the well-known computation of the unramified cohomology of a Laurent ring.

	\bigbreak

	\noindent
	\begin{corollary}
	\label{Z2Invariants}
	Let $\CM_\ast$ be a cycle module over $k_0$. Then, 
	$$
		\CMu{n}(k_0 [T_1^{\pm}, \dots, T_l^{\pm}]) \simeq\		\bigoplus_{\substack{r \le l \\ 1 \le i_1 < \cdots < i_r \le l}}
	\{ T_{i_1}, \dots, T_{i_r}\} \cdot\CM_{n - r}(k_0).
	$$
	\end{corollary}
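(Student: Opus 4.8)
The plan is to derive Corollary~\ref{Z2Invariants} from Lemma~\ref{cheeseLine} by a straightforward induction on the number of variables $l$, using homotopy invariance to convert statements about Laurent polynomials into the repeated application of the lemma. The key observation is that $k_0[T_1^{\pm}, \dots, T_l^{\pm}]$ can be built up one variable at a time: if I set $R = k_0[T_1^{\pm}, \dots, T_{l-1}^{\pm}]$, then $k_0[T_1^{\pm}, \dots, T_l^{\pm}] = R[T_l^{\pm}] = R[T_l]_{T_l}$, which is exactly the localization $R[T]_{\prod_i(T-a_i)}$ of Lemma~\ref{cheeseLine} in the single-point case $l=1$ with $a_1 = 0$. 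To apply the lemma I must first check its hypotheses: $R$ is regular, integral, and essentially of finite type over $k_0$ (true, as a localization of a polynomial ring), and the single value $a_1 = 0$ trivially satisfies the pairwise-unit condition. Thus the lemma gives
$$
\CMu{n}(R[T_l^{\pm}]) \simeq \CMu{n}(R) \oplus \{T_l\}\cdot\CMu{n-1}(R).
$$

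Next I would set up the induction. The base case $l=0$ is $\CMu{n}(k_0) = \CM_n(k_0)$, which holds because $\Spec(k_0)$ has no codimension-one points, so the defining kernel is all of $\CM_n(k_0)$. For the inductive step, I substitute the inductive hypothesis for $\CMu{n}(R)$ and $\CMu{n-1}(R)$ — namely that each is the asserted direct sum over subsets of $\{1, \dots, l-1\}$ — into the two summands above. The first summand $\CMu{n}(R)$ contributes all index sets $1 \le i_1 < \cdots < i_r \le l-1$ \emph{not} containing $l$, while the second summand $\{T_l\}\cdot\CMu{n-1}(R)$ contributes precisely those index sets that \emph{do} contain $l$: multiplying the basis element $\{T_{i_1}, \dots, T_{i_{r-1}}\}$ of $\CMu{n-1}(R)$ by $\{T_l\}$ on the left yields $\{T_l, T_{i_1}, \dots, T_{i_{r-1}}\}$, and after reordering the symbol (permissible in Milnor $K$-theory up to sign, which is irrelevant modulo $2$ since $\CM_\ast$ is annihilated by $2$) this is the basis element indexed by the set $\{i_1, \dots, i_{r-1}, l\}$. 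Taking the union of the two collections of index sets recovers exactly the full sum over all subsets of $\{1, \dots, l\}$, completing the induction.

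The one point requiring genuine care — and the step I expect to be the main obstacle — is bookkeeping the identification of $\CMu{n-1}(R)$ as a submodule of $\CM_{n-1}(K(T_l))$ so that the external product with $\{T_l\}$ lands coherently inside $\CMu{n}(R[T_l^{\pm}])$ and matches the canonical basis elements $\{T_{i_1}, \dots, T_{i_r}\}\cdot\CM_{n-r}(k_0)$ claimed in the statement. In particular I must verify that under the inductive isomorphism the generator $\{T_l\}\cdot\big(\{T_{i_1}, \dots, T_{i_{r-1}}\}\cdot m\big)$ with $m \in \CM_{n-r}(k_0)$ equals, as an unramified class, the intended generator $\{T_{i_1}, \dots, T_{i_{r-1}}, T_l\}\cdot m$; this hinges on the graded-commutativity of the $\MK_\ast$-module structure together with the fact that residues of the symbols $\{T_{i_j}\}$ and $\{T_l\}$ vanish along all relevant codimension-one points, so that the products remain unramified. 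Since the lemma already furnishes the explicit splitting $x \mapsto \{T-a_i\}x$, the residue computations needed here are implicit in its proof, and the remaining work is purely combinatorial reindexing. I would state these identifications briefly and invoke homotopy invariance and Lemma~\ref{cheeseLine} repeatedly rather than reprove them.
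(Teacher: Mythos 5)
Your proof is correct and follows exactly the route the paper intends: the paper's entire proof of this corollary is the single sentence "By induction and homotopy invariance, Lemma \ref{cheeseLine} implies the well-known computation," and your induction on $l$ with $R = k_0[T_1^{\pm},\dots,T_{l-1}^{\pm}]$ and the lemma applied to $R[T_l]_{T_l}$ is precisely that argument, with the bookkeeping (base case, reindexing of symbols mod $2$, unramifiedness of the products) carefully filled in.
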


	\subsection{Invariants of $(\Z/2)^n$}
	\label{Z/2InvSec}
	Corollary \ref{Z2Invariants} implies that the invariants of $(\Z/2)^n$ with values in a cycle module are completely decomposable. This is shown for invariants of $(\Z/2)^n$ with values in $\MK_\ast$ in Serre's lectures \cite[Part I, Sect.\ 16]{CohInv}. Writing $(\a) \in H^1(k, \Z/2)$ for the class of $\a \in k^\times$, every index set $1 \le i_1 < \cdots < i_l \le n$ gives rise to an invariant 
\begin{align*}
x_{i_1, \dots, i_l}:\, H^1(k, (\Z/2)^n) \simeq H^1(k, \Z/2)^n \to \MK_l(k)\\
\big[ (\a_1), \dots, (\a_n)\big] \mapsto \{\a_{i_1}, \dots, \a_{i_l}\} .
\end{align*}
We show that they form a basis of $\Inv((\Z/2)^n, \CM_\ast)$ for every cycle module $\CM_\ast$ with $\MK_\ast$-structure.

\smallbreak

Let $k \in \Fields_{\gk}$, $a \in \Inv^\ast_k((\Z/2)^n, \CM_\ast)$ and write $K: = k(t_1, \dots, t_n)$ for the rational function field in $n$ variables over the field $k$. Then, $T: k(\sqrt{t_1}, \dots, \sqrt{t_n})\supseteq k(t_1, \dots, t_n)$ is a versal $(\Z/2)^n$-torsor, so that by \cite[Part I, Thm.~11.1]{CohInv} or \cite[Thm.\ 3.5]{GiHi19},
$$
a_K(T)\, \in \, \CMu{\ast}(k[t_1^{\pm}, \dots, t_n^{\pm}]).
$$
By Corollary \ref{Z2Invariants}, there exist unique $m_{i_1,\dots, i_l} \in \CM_\ast(k)$ with
$$
a_K(T) = \sum_{\substack{l \le n \\ 1 \le i_1 < \dots < i_l \le n}}\big\{ t_{i_1}, \dots, t_{i_l}\big\} m_{i_1,\dots, i_l}.
$$
Then, the invariant 
$$
b: = \sum_{\substack{l \le n \\ 1 \le i_1 < \dots < i_l \le n}}x_{i_1,\dots, i_l} m_{i_1,\dots, i_l}.
$$
agrees with $a$ on the versal torsor $T$. Hence, the detection principle in the form of \cite[Part I, 12.2]{CohInv} or \cite[Thm.\ 3.7]{GiHi19} implies that $a = b$, as asserted.

\subsection{Invariants of Weyl groups of type $G_2$}
\label{G2InvSubSect}
Assume here that the base field is of characteristic not~$2$ or~$3$.

The group $W(G_2)$ is a semi-direct product of a normal subgroup $L$ of order $3$ and a subgroup $P \simeq (\Z/2)^2$
generated by the reflections at two orthogonal roots, see \cite[Chap.\ VI, \S 4, No 13]{LIE4-6}. Since there is up to conjugacy only one such $P$,
Proposition \ref{splitCorollary} shows that the restriction map $\res_{W(G_2)}^P$ is injective. Since the projection $W(G_2)\simeq P\ltimes L \to P$
induces a splitting, we deduce that $\res_{W(G_2)}^P$ is in fact an isomorphism.

In view of the results for other Weyl groups it is worthwhile to note that a basis for the invariants can also be expressed in terms of the Stiefel-Whitney invariants to be introduced in Section \ref{swInvSubSect} below.  As in Section \ref{b2Sec} below, we see that the restriction of the Stiefel-Whitney classes in degrees 1 and 2 to $P$ correspond to the invariants $x_1 + x_2$ and $x_{1, 2}$. Finally, considering the morphism $W(G_2) \to O_1 = \{\pm 1\}$ sending one of the two classes of reflections to $-1$ and the other to 1 yields the invariant $x_1$ (or $x_2$).

%
%TORSOR COMPUTATIONS
%
\subsection{Torsor computations}
Henceforth, we switch freely between the interpretation of $H^1(k, O_n)$ via cocycles on the one hand and via quadratic forms on the other hand. For this purpose, we recall how to view $H^1(k, O_n)$ in terms of non-abelian Galois cohomology \cite{cg}. Let $c \in Z^1(\Gamma, O_n)$ be a cocycle. That is, $c$ is a continuous map from the absolute Galois group $\Gamma$ of a separable closure $k_s/k$ to $O_n(k_s)$ and satisfies the cocycle condition $c_{\sigma\tau} = c_\sigma \cdot\sigma(c_\tau)$. To construct a quadratic form $q_c$ over $k$, we first define an action $\star$ of $\Gamma$ on $k_s^n$ via $\sigma \star v = c_\sigma(\sigma(v))$. Then, we let $v_1, \dots, v_n \in k_s^n$ denote a $k$ basis of the vector space 
\begin{align}
	\label{vStarEq}
	\Vsg = \{v \in k_s^n:\, \sigma \star v = v\text{ for all }\sigma \in \Gamma\}.
\end{align}
Now, we let $q_c$ be the quadratic form whose associated bilinear form $b_{q_c}$ is determined by $b_{q_c}(e_i, e_j) = \lan v_i, v_j \ran$, where $\lan \cdot, \cdot\ran$ denotes the standard scalar product in $k_s^n$.{ In other words, $q_c$ is the restriction to $\Vsg$ of the quadratic form associated with the standard scalar product $\lan \cdot, \cdot \ran$.} We will come back frequently to the following three pivotal examples, where $V = k^2_s$.

\medbreak

\begin{example}
\label{abQuadratic}
Consider the group homomorphism $(\Z/2)^2 \to O_2$,
\begin{align*}
e_1\mapsto \begin{pmatrix} 0&1\\1&0\end{pmatrix}&, \; e_2\mapsto \begin{pmatrix} 0&-1\\-1&0\end{pmatrix}.
\end{align*}
	Let $(\a, \b) \in (k^\times/{k^\times}^2)^2$ be a $(\Z/2)^2$-torsor over $k$. Then, $v_1 = (\sqrt\a, -\sqrt\a)^\top$, $v_2 = (\sqrt\b, \sqrt\b)^\top$ defines a basis of $V^{\star\Gamma}$ and the induced bilinear form is the diagonal form $q_{(\a, \b)} = \lan 2\a, 2\b\ran$.
\end{example}

\begin{example}
\label{aQuadratic}
Consider the group homomorphism $\Z/2 \to O_2$,
\begin{align*}
e_1&\mapsto \begin{pmatrix} 0&1\\1&0\end{pmatrix}.\\
\end{align*}
Let $\a \in k^\times/k^{\times2}$ be a $\Z/2$-torsor. Applying the above example with $\b = 1$, we see that the induced bilinear form is the diagonal form $q_{(\a)} = \lan 2\a, 2\ran$.
\end{example}

\begin{example}
\label{abQuadratic2}
Consider the group homomorphism $(\Z/2)^2 \to O_2$,
\begin{align*}
e_1\mapsto \begin{pmatrix} 0&1\\1&0\end{pmatrix}&, \;e_2\mapsto \begin{pmatrix} 0&1\\1&0\end{pmatrix}.
\end{align*}
Let $(\a, \b) \in (k^\times/{k^\times}^2)^2$ be a $(\Z/2)^2$-torsor over $k$. Then, $v_1 = (1, 1)^\top$, $v_2 = (\sqrt{\a\b}, -\sqrt{\a\b})^\top$ defines	a basis of $V^{\star\Gamma}$. The induced bilinear form is the diagonal form $q_{(\a, \b)} = \lan 2, 2\a\b\ran$.
\end{example}

%
%EMBEDDING
%
\subsection{An embedding of $S_{2^n}$ into $O_{2^n}$}
\label{Sym - OnEmbeddingSubSect}
Next, we describe a specific embedding $(\Z/2)^n \to O_{2^n}$ on the torsor level. For any $ l \le 2^n - 1$ let $b(l) \ss [0, n - 1]$ be the position of the bits in the binary representation. That is, $l = \sum_{i \in b(l)} 2^i$. Furthermore, let $f_S$ be the flipping the bits at all positions in $S \ss [0, n - 1]$. In other words, $f_S:\, [0, 2^n - 1] \to [0, 2^n - 1]$,
\begin{align*}
	f_S(l):= b^{-1}(b(l) \Delta S), 
\end{align*}
where $R \Delta S = (R \setminus S) \cup (S \setminus R)$ is the symmetric difference. In this notation, the group homomorphism $\phi:\, (\Z/2)^n \to S_{2^n} \ss O_{2^n}$
\begin{align*}
	\phi\Big(\sum_{s \in S}e_s\Big):=f_S
\end{align*}
induces a map $\phi_*:\, H^1(k, (\Z/2)^n) \to H^1(k, O_{2^n})$, which we now describe explicitly.
%
%PFISTER LEM
%
\begin{lemma}
\label{pfisterLemma}
Let $\e_0, \dots, \e_{n - 1} \in k^\times / k^{\times2}$. Then, 
	$$\phi_*(\e_0, \dots, \e_{n - 1}) = \lan 2^n \ran \otimes \lan\lan -\e_0\ran\ran \otimes \lan\lan -\e_1\ran\ran \otimes \cdots \otimes \lan\lan -\e_{n - 1}\ran\ran.$$
\end{lemma}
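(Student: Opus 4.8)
The plan is to prove the formula by induction on $n$, exploiting that $\phi$ is the regular representation of $(\Z/2)^n$ and hence carries a tensor decomposition compatible with the splitting $(\Z/2)^n = (\Z/2)^{n-1}\times\lan e_{n-1}\ran$. I write $\phi_n$ for the homomorphism of the lemma, to keep track of the rank. For the base case $n=1$ the single generator $e_0$ is sent to the transposition $f_{\{0\}}$, that is, to $\begin{pmatrix}0&1\\1&0\end{pmatrix}\in O_2$, so Example \ref{aQuadratic} gives $\phi_{1,*}(\e_0) = \lan 2, 2\e_0\ran = \lan 2\ran\otimes\lan\lan-\e_0\ran\ran$, which is the asserted formula for $n=1$.

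For the inductive step I index the standard basis of $k_s^{2^n}$ by subsets $T\ss[0,n-1]$ via the binary representation, and identify $k_s^{2^n}\cong k_s^{2^{n-1}}\otimes k_s^2$ through $e_T\mapsto e_{T\cap[0,n-2]}\otimes e_\delta$, where $\delta = 1$ if $n-1\in T$ and $\delta = 0$ otherwise. Under this identification the generators $e_s$ with $s\le n-2$ flip a bit among the first $n-1$ positions and so act as $\phi_{n-1}(e_s)\otimes\mathrm{id}$, while $e_{n-1}$ swaps the two halves and acts as $\mathrm{id}\otimes\phi_1(e_{n-1})$. Writing $\chi_s(\sigma)\in\{0,1\}$ for the sign recorded by $\sigma(\sqrt{\e_s}) = (-1)^{\chi_s(\sigma)}\sqrt{\e_s}$, the cocycle $c_\sigma = \sum_s\chi_s(\sigma)e_s$ attached to $(\e_0,\dots,\e_{n-1})$ therefore factors as $\phi_n(c_\sigma) = \phi_{n-1}(c'_\sigma)\otimes\phi_1(c''_\sigma)$, where $c'$ and $c''$ are the cocycles of $(\e_0,\dots,\e_{n-2})$ and $(\e_{n-1})$. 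Since the Galois action on $k_s$-coefficients also respects the tensor factorization, the twisted action satisfies $\sigma\star(v\otimes w) = (\sigma\star' v)\otimes(\sigma\star'' w)$, so that $\Vsg_n\supseteq\Vsg_{n-1}\otimes\Vsg_1$, the subscripts indicating the relevant torsor. Both sides are $k$-subspaces of dimension $2^n$, because each $\Vsg$ for an $O_m$-cocycle is a $k$-form of $k_s^m$; hence they coincide. Moreover the standard scalar product on $k_s^{2^n}$ is the tensor product of the standard scalar products on the two factors, so the form $q_c$ obtained by restriction to $\Vsg_n$ is the tensor product of the forms on $\Vsg_{n-1}$ and $\Vsg_1$. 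This gives $\phi_{n,*}(\e_0,\dots,\e_{n-1}) = \phi_{n-1,*}(\e_0,\dots,\e_{n-2})\otimes\phi_{1,*}(\e_{n-1})$, and the induction hypothesis together with $\lan 2^{n-1}\ran\otimes\lan 2\ran = \lan 2^n\ran$ and the commutativity of $\otimes$ finishes the proof.

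The main obstacle is the bookkeeping in the inductive step: one must verify carefully that the permutation action of each generator, the push-forward cocycle, and the standard scalar product all split as the claimed tensor products under the identification $k_s^{2^n}\cong k_s^{2^{n-1}}\otimes k_s^2$, and that passing to $\Gamma$-invariants is compatible with the tensor product. A priori $(A\otimes B)^\Gamma$ only contains $A^\Gamma\otimes B^\Gamma$, and it is precisely the dimension count $\dim_k\Vsg = 2^n$ that forces equality. As a conceptual check, note that $\phi$ is the composite $(\Z/2)^n\to S_{2^n}\to O_{2^n}$ sending a torsor to the trace form of the \'etale algebra $\bigotimes_i k[x_i]/(x_i^2-\e_i)$; the multiplicativity of trace forms under tensor products of algebras reproduces the same answer and provides an alternative route to the lemma.
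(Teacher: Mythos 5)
Your proof is correct, but it takes a genuinely different route from the paper. The paper argues in one shot: it writes down an explicit $\Gamma$-invariant basis $\{v_p\}_{p \le 2^n-1}$ of $\Vsg$, with components $(v_p)_\ell = (-1)^{|b(p)\cap b(\ell)|}\prod_{i\in b(p)}\sqrt{\e_i}$ (a character-type basis scaled by square roots), verifies invariance under the twisted action, and computes the Gram matrix directly, finding $b(v_p,v_p) = 2^n\prod_{i\in b(p)}\e_i$ and $b(v_p,v_q)=0$ for $p\ne q$ by a pairing/cancellation argument over subsets $L\ss[0,n-1]$; the diagonalization $\bigoplus_p \lan 2^n\prod_{i\in b(p)}\e_i\ran$ is then visibly the stated Pfister form. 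You instead prove the multiplicativity $\phi_{n,*}=\phi_{n-1,*}\otimes\phi_{1,*}$ by splitting the regular representation as $k_s^{2^n}\cong k_s^{2^{n-1}}\otimes k_s^2$, checking that the cocycle, the twisted Galois action, and the standard scalar product all respect this splitting, and induct, with the base case supplied by the paper's own Example \ref{aQuadratic}. The one point where care is needed in your argument is exactly the one you flag: a priori $\Vsg_{n-1}\otimes\Vsg_1$ is only contained in $\Vsg_n$, and you close the gap by the descent fact that $\Vsg$ is a $k$-form of $k_s^m$ (so a $k$-basis of each factor is a $k_s$-basis of the ambient space, giving dimension $2^n$ on both sides); this fact is implicit in the paper's torsor-to-form construction, so your use of it is legitimate within its framework. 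What each approach buys: the paper's computation is self-contained and produces the explicit invariant vectors and orthogonal decomposition, which is concretely reusable; yours is structural, explains conceptually why the answer is a scaled Pfister form (multiplicativity of trace forms under tensor products of \'etale algebras, as your closing remark makes precise), and generalizes with no extra effort to any splitting of $(\Z/2)^n$ into direct factors.
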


\noindent
Since any two simply transitive actions on $[0, 2^n - 1]$ are conjugate in $S_{2^n}$, Lemma \ref{pfisterLemma} is more useful than it may seem at first.

\begin{proof}
	Consider a cocycle representation $c \in Z^1(\Gamma, (\Z/2)^n)$ of the torsor $(\e_0, \dots, \e_{n - 1}) \in (k^\times/k^{\times2})^n$. That is, the $i$th component
	of $c_\sigma$ equals 1 if and only if $\sigma\big(\sqrt{\e_i}\big) = - \sqrt{\e_i}$. To determine the quadratic form defined by the induced cocycle
	$\sigma \mapsto \phi(c_\sigma)$, we assert that a basis of the $k$-vector space $V^{\star \Gamma}$ from \eqref{vStarEq} is given by $\{v_0, \dots, v_{2^n - 1}\}$, 
	where $v_p$ has components
	$$(v_p)_\ell = (-1)^{|b(p) \cap b(\ell)|} \prod_{\substack{i \in b(p)}} \sqrt{\e_i}.$$
	First, $v_p \in V^{\star\Gamma}$, since writing $c_\sigma = \sum_{i \in S}e_i$ for some $S = S(\sigma) \ss [0, n - 1]$ shows that
\begin{align*}
 \sigma\Big((-1)^{|b(p) \cap b(\ell)|}\prod_{\substack{i \in b(p)}}\sqrt{\e_i}\Big) = (-1)^{|b(p) \cap b(\ell)| + |b(p) \cap S|}\prod_{i \in b(p)}\sqrt{\e_i} = (v_p)_{f_S(\ell)}.
\end{align*}
Moreover, to prove the linear independence of the $\{v_p\}_p$, we note that
\begin{align*}
b(v_p, v_p) = \sum_{u \le 2^n - 1} (v_p)_u(v_p)_u = 2^n\prod_{i \in b(p)}\e_i.
\end{align*}
Hence, it suffices to show that $b(v_p, v_q) = 0$, if $p \ne q$. By assumption, there is at least one $i \in b(p) \Delta b(q)$, so that pairing any $L \ss [0, n - 1]\setminus \{i\}$ with $L \cup \{i\}$ shows that
\begin{align*}
	b(v_p, v_q)& = \prod_{i \in b(p)}\sqrt{\epsilon_i} \cdot\prod_{i \in b(q)}\sqrt{\epsilon_i} \cdot\, \sum_{L \ss [0, n - 1]}(-1)^{|b(p) \cap L| + |b(q) \cap L|}\\
	& = \prod\limits_{\substack{i \in b(p) \\ j \in b(q)}}\sqrt{\epsilon_i\epsilon_j}\hspace{-.4cm} \sum_{L \ss [0, n - 1] \setminus \{i\}}\hspace{-.4cm}\big((-1)^{|b(p) \cap L| + |b(q) \cap L|} + (-1)^{|b(p) \cap L| + |b(q) \cap L| + 1}\big), 
\end{align*}
vanishes as claimed.
\end{proof}

\subsection{Stiefel-Whitney Invariants}
\label{swInvSubSect}

The \emph{total Stiefel-Whitney class} is defined by
\begin{align*}
w_\ast\,:\;
H^1(k, O_n)& \to \MK_\ast(k)\\
\lan \a_1, \dots, \a_n\ran &\mapsto \prod_{i \le n} (1 + \{\a_i\}), 
\end{align*}
where $\lan\a_1, \dots, \a_n\ran$ is the class in $H^1(k, O_n)$ of the diagonal form. They generate the invariants of the orthogonal group $O_n$ with values in $\MK_\ast$ as Serre shows in \cite[Part I, Sect.\ 17]{CohInv}.

\smallbreak

\begin{theorem}
\label{swThm}
Let $\gk$ be a field of characteristic not $2$. Then, the Stiefel-Whitney invariants form a basis in the sense of Definition~\ref{decomposableDef} of $\Inv(O_n, \MK_\ast)$ for all $n \ge 1$.
\end{theorem}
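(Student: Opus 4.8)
Since the versal-torsor input below is insensitive to the coefficients, the plan is to prove the coefficient-free statement that for \emph{every} cycle module $\CM_\ast$ the invariants $\Inv(O_n, \CM_\ast)$ are completely decomposable with basis the Stiefel--Whitney classes $w_0 = 1, w_1, \dots, w_n$; Theorem~\ref{swThm} is then the special case $\CM_\ast = \MK_\ast$. The strategy mirrors Section~\ref{Z/2InvSec}, the one new ingredient being a symmetry reduction. Fix $k \in \Fields_{\gk}$ and set $K := k(t_1, \dots, t_n)$. The generic diagonal form $q := \lan t_1, \dots, t_n\ran \in H^1(K, O_n)$ is defined over the torus $\Spec k[t_1^\pm, \dots, t_n^\pm]$, on which every diagonal entry is a unit, and it is versal because any rank-$n$ quadratic form over a field extension of $k$ is diagonalizable, hence a specialization of $q$.

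First I would establish the two structural facts provided by Serre's versal-torsor machinery, exactly as in Section~\ref{Z/2InvSec}. The detection principle \cite[Part~I, 12.2]{CohInv} shows that the evaluation map
$$
\Inv^\ast_k(O_n, \CM_\ast) \to \CM_\ast(K), \qquad a \mapsto a_K(q)
$$
is injective, while \cite[Part~I, Thm.~11.1]{CohInv} guarantees that the value is unramified on the torus, i.e.\ $a_K(q) \in \CMu{\ast}(k[t_1^\pm, \dots, t_n^\pm])$. Corollary~\ref{Z2Invariants} then provides the decomposition
$$
\CMu{n}(k[t_1^\pm, \dots, t_n^\pm]) \simeq \bigoplus_{\substack{r \le n \\ 1 \le i_1 < \cdots < i_r \le n}} \{t_{i_1}, \dots, t_{i_r}\} \cdot \CM_{n - r}(k).
$$

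The decisive step is to cut this module down by symmetry. Every $\sigma \in S_n$ induces a $k$-automorphism of $K$ sending $t_i \mapsto t_{\sigma(i)}$, under which $q$ is carried to the isometric --- hence equal --- class $\lan t_{\sigma(1)}, \dots, t_{\sigma(n)}\ran \in H^1(K, O_n)$; naturality of $a$ therefore gives $\sigma_\ast(a_K(q)) = a_K(q)$. In the decomposition above, $\sigma_\ast$ acts as the identity on the coefficient modules $\CM_{n-r}(k)$ and permutes the squarefree symbols by $\{t_{i_1}, \dots, t_{i_r}\} \mapsto \{t_{\sigma(i_1)}, \dots, t_{\sigma(i_r)}\}$, so its fixed submodule is freely generated over $\CM_\ast(k)$ by the orbit sums
$$
e_r := \sum_{1 \le i_1 < \cdots < i_r \le n} \{t_{i_1}, \dots, t_{i_r}\} \in \MK_r(K), \qquad r = 0, \dots, n.
$$
Consequently the image of the evaluation map lies in $\bigoplus_{r=0}^n e_r \cdot \CM_{\ast - r}(k)$. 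To match generators I would invoke the defining identity $w_\ast(\lan t_1, \dots, t_n\ran) = \prod_{i \le n}(1 + \{t_i\})$, whose degree-$r$ part is exactly $e_r$; thus $w_r \in \Inv^r(O_n, \MK_\ast)$ evaluates on $q$ to the free generator $e_r$. Hence the image of the evaluation map equals $\bigoplus_{r=0}^n e_r \cdot \CM_{\ast-r}(k)$, and injectivity transports both the spanning and the $\CM_\ast(k)$-linear independence back to the family $\{w_r\}_{r=0}^n$. As $k$ was arbitrary, this is precisely complete decomposability in the sense of Definition~\ref{decomposableDef}.

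The main obstacle I anticipate is the bookkeeping in the symmetry step rather than any deep input: one must check that reordering the diagonal entries produces literally the same class in $H^1(K, O_n)$ (so that naturality applies), and that under Corollary~\ref{Z2Invariants} the $S_n$-action is the bare permutation of basis symbols with neither a sign nor any mixing of degrees --- the point where the surviving monomials being squarefree, and the coefficients being annihilated by $2$, are used. Once this is in place, everything else is the formal versal-torsor package already deployed for $(\Z/2)^n$.
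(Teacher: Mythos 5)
Your argument is correct, but note that the paper never proves Theorem \ref{swThm} itself: it simply quotes the result from Serre \cite[Part I, Sect.~17]{CohInv}. What you have reconstructed is, in substance, Serre's proof --- versality of the generic diagonal form $\lan t_1,\dots,t_n\ran$, unramifiedness of an invariant's value over the torus, and $S_n$-symmetry of that value forcing the coefficient of $\{t_{i_1},\dots,t_{i_r}\}$ to depend only on $r$ --- executed with the paper's own tools (Corollary \ref{Z2Invariants} for the unramified cohomology of the torus and Lemma \ref{orbitSum1} for the fixed submodule) and in the greater generality of an arbitrary cycle module with $\MK_\ast$-structure. That generalization is exactly parallel to the paper's upgrade of Serre's $(\Z/2)^n$-computation in Section \ref{Z/2InvSec}, and your two flagged checking points are indeed sound: permuting the diagonal entries yields an isometric form, hence the same class in $H^1(K,O_n)$, so naturality applies; and mod 2 no signs appear when reordering symbols, so the $S_n$-action is a bare permutation of the squarefree basis symbols, letting the orbit-sum lemma apply degree by degree. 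The one repair I would ask for is bibliographic: the versal-torsor inputs you cite, \cite[Part I, Thm.~11.1]{CohInv} for unramifiedness and \cite[Part I, 12.2]{CohInv} for detection, are stated there for Galois-cohomology coefficients, so for cycle-module coefficients you should invoke \cite[Thm.~3.5]{GiHi19} and \cite[Thm.~3.7]{GiHi19} instead, exactly as the paper does in Section \ref{Z/2InvSec}; for Theorem \ref{swThm} as stated, with $\CM_\ast = \MK_\ast$, the GMS references suffice by Voevodsky's identification of $\MK_\ast$ with mod $2$ Galois cohomology. In short, your route buys a self-contained proof and the cycle-module statement; the paper's citation buys brevity.
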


\smallbreak

By \cite[Rem.\ 17.4]{CohInv} the product of Stiefel-Whitney classes is given by 
\begin{equation}
\label{SWProduct}
	w_r w_s = \{-1\}^{b^{-1}(b(r) \cap b(s))} w_{r + s - b^{-1}(b(r) \cap b(s))},
\end{equation}
where $b( \cdot)$ denote the binary representation of Section \ref{Sym - OnEmbeddingSubSect}.

\begin{example}
\label{modSW}
Later, we will meet some examples where it is easier to do the computations with a slight variant of the Stiefel-Whitney classes.
Therefore, we introduce \emph{modified} Stiefel-Whitney classes $\wt{w_d} \in \Inv^d(O_n, \MK_\ast)$: For even $n$, we put $\wt{w_d}(q):= w_d(\lan 2\ran\otimes q)$ for all $d \le n$ and for odd $n$, we set inductively $\wt{w_0} = 1$ and $\wt{w_{d + 1}}(q) = w_{d + 1}(\lan2\ran\otimes q)-\{2\}\wt{w_d}(q)$. Then, we obtain for even $\ms{rank}(q)$ that
	$$ \wt{w_d}(\lan 2 \ran \otimes q) = w_d(q) = \wt{w_d}(\lan 1 \ran + \lan 2\ran \otimes q).$$
	Alternatively, one could also give a more direct definition of modified Stiefel-Whitney classes not depending on the parity of $q$ by setting $\wt{w_d}(q)$ as $w_d(q)$ if $d$ is odd and as $w_d(q) + \{2\} w_{d - 1}(q)$ if $d$ is even.
\end{example}

Finally, we recall another kind of invariants.

\begin{example}[Witt-ring invariants]
\label{pfisterInvariant}
The image of an $n$-dimensional quadratic form in the Witt ring $G$ yields an invariant $\Inv^*(O_n, W)$. Since
the definition of invariants only makes use of the functor property, this concept makes sense, even though $G$ is not
a cycle module. Albeit of limited use in the setting of quadratic forms, the aforementioned invariant becomes a refreshing
source of invariants for groups $G$ embedding into $O_n$. Indeed, for Weyl groups $G$ of type $D_{2n}, E_7, E_8$, we
construct embeddings such that the restrictions become invariants with values in a suitable power of the fundamental ideal $I \ss W$. Since the Milnor morphism 
\begin{align*}
\fmil_n:\, \MK_n & \to I^n/I^{n + 1}\\
\{\a_1\} \cdots \{\a_n\} &\mapsto \lan\lan \a_1 \ran\ran \otimes \cdots \otimes \lan\lan \a_n \ran\ran
\end{align*}
with $\lan\lan a \ran\ran := \lan 1, -a\ran$ induces an isomorphism between mod 2 Milnor K-theory and the graded Witt ring \cite[Theorem 4.1]{OVV07}, we obtain elements in $\Inv^\ast(G, \MK_\ast)$.
\end{example}

\medbreak

\subsection{A technical lemma}
The following technical lemma simplifies the computations of invariants.

\smallbreak

\begin{lemma}
\label{orbitSum1}
Let $R$ be a commutative ring, $I$ a finite index set, $M$ an $R$-module and $G$ a finite group acting on $I$.
The operation of $G$ on $I$ induces an operation of $G$ on the $R$-module $N:= \oplus_{i \in I}M$ by permutation of coordinates.
Let $I = I_1\udot I_2\udot\cdots\udot I_k$ be its 	orbit decomposition. Then, $N^G\cong \oplus_{i \le k} N_i$, where for $i \le k$, 
$$N_i:= \Big\{\sum_{j \in I_i}\iota_j(m):\, m \in M\Big\}\cong M.$$
Here, $\iota_j:\, M \to N$ denotes the inclusion along the $j$th coordinate.
\end{lemma}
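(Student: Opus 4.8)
The plan is to unwind the permutation action coordinate by coordinate and observe that $G$-invariance is precisely the condition of being constant along each orbit. Writing an element of $N = \oplus_{i \in I}M$ as a tuple $x = (x_i)_{i \in I}$ with $x_i \in M$, the induced action reads $(g \cdot x)_i = x_{g^{-1}(i)}$, or equivalently $g \cdot \iota_i(m) = \iota_{g(i)}(m)$ on the image of the $i$th inclusion. Everything then follows from translating ``$G$-fixed'' into ``constant on orbits.''

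First I would check that each $N_\ell$ lands inside $N^G$. For $m \in M$ and $g \in G$, one computes $g \cdot \sum_{j \in I_\ell}\iota_j(m) = \sum_{j \in I_\ell}\iota_{g(j)}(m) = \sum_{j \in I_\ell}\iota_j(m)$, since $g$ merely permutes the orbit $I_\ell$. Thus $N_\ell \subseteq N^G$. The map $M \to N_\ell$, $m \mapsto \sum_{j \in I_\ell}\iota_j(m)$, is $R$-linear and an isomorphism onto $N_\ell$, with injectivity obtained by reading off any single coordinate $j \in I_\ell$; this gives $N_\ell \cong M$. Because the orbits $I_1, \dots, I_k$ are pairwise disjoint, the submodules $N_\ell$ have disjoint coordinate supports, so their sum inside $N$ is direct, and $\bigoplus_{\ell \le k}N_\ell$ is a well-defined submodule contained in $N^G$.

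The remaining and essential step is the reverse inclusion $N^G \subseteq \bigoplus_{\ell \le k}N_\ell$. Given $x = (x_i)_{i} \in N^G$, the invariance relation $x_i = (g \cdot x)_i = x_{g^{-1}(i)}$, valid for every $g \in G$, shows that the assignment $i \mapsto x_i$ is constant on each $G$-orbit. Hence for each orbit $I_\ell$ there is a common value $m_\ell := x_j$, independent of $j \in I_\ell$, and $x = \sum_{\ell \le k}\sum_{j \in I_\ell}\iota_j(m_\ell)$ lies in $\bigoplus_{\ell \le k}N_\ell$. Combining the two inclusions with the identifications $N_\ell \cong M$ yields the asserted isomorphism.

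I do not expect a genuine obstacle here: the statement is just the structural description of the invariants of a permutation module, and the argument is a direct unwinding of the definitions. The only point requiring minor care is the bookkeeping of the action convention, namely whether $g$ or $g^{-1}$ appears in $(g\cdot x)_i$; but since the conclusion refers only to orbits, which are insensitive to this choice, it has no bearing on the result.
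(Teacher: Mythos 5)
Your proof is correct and follows essentially the same route as the paper: both rest on the observation that $G$-invariance forces the coordinates of $x$ to be constant on each orbit (the paper phrases this as comparing the $g(1)$th entries of $x$ and $g\cdot x$). The only difference is cosmetic: the paper wraps the orbit-by-orbit splitting in an induction on the number of non-zero components, whereas you decompose over all orbits in a single pass, which is if anything slightly cleaner.
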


\begin{proof}
	Since $(\sum_{j \ne i}N_j) \cap N_i = \{0\}$ and $\oplus_{i \le k}N_i\ss N^G$ hold for every $i$, it remains to show that the
	$N_i$ generate $N^G$. To prove this, note that any $x \in N$ can be written uniquely as $x = \sum_{i \in I}\iota_i(m_i)$ for
	certain $m_i \in M$. We prove by induction on the number of non-zero $m_i$ that any $x \in N^G$ lies in the module
	generated by the $N_i$. We may suppose $I = [1;|I|]$, $m_1 \ne 0$ and denote by $I_1$ the orbit containing 1.
	Now, comparing the $g(1)$th entry of $x$ and of $g\ldot x$ yields that $m_{g(1)} = m_1$ for every $g \in G$.
	In particular, we can split of a sum $\sum_{j \in I_1}\iota_j(m_j) = \sum_{j \in I_1}\iota_j(m_1) \in N_1$ from $x$. Applying induction to $x-\sum_{j \in I_1}\iota_j(m_1)$ concludes the proof.
\end{proof}

\smallbreak

In particular, Lemma \ref{orbitSum1} yields the following orbit decomposition.
\begin{corollary}
\label{orbitSum}
Let $R_*$ be a commutative, graded ring, $I^1, \dots, I^{r}$ be finite index sets, $M_*$ be a graded $R_*$-module
and $G$ a finite group acting on each of the $I^{\ell}$. The operation of $G$ on the $I^{\ell}$ induces an operation of
$G$ on the graded $R_*$-module $N_*:= \oplus_{\ell \le r}\oplus_{I^{\ell}}M_{* - d_\ell}$, where the $d_\ell$ are certain
non-negative integers. Let $I^{\ell} = I^{\ell}_1\udot I_2^{\ell}\udot\cdots\udot I^{\ell}_{n_\ell}$ be the orbit decomposition.
Then, $N^G\cong \oplus_{\ell \le r}\oplus_{i \le n_\ell} N_{\ell, i}$, where for $ \ell \le r$, $ i \le n_\ell$, we put 
$$(N_{\ell, i})_*:= \Big\{\sum_{j \in I^{\ell}_i}\iota_j(m)\;:\,\; m \in M_{* - d_\ell}\Big\}\cong M_{*-d_\ell}.$$
\end{corollary}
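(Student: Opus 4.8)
The plan is to reduce the statement directly to Lemma \ref{orbitSum1}, since the corollary is merely its graded, multiple-index-set reformulation and carries no new content beyond bookkeeping.

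First I would observe that the $G$-action on $N_\ast$ preserves the outer direct-sum decomposition indexed by $\ell \in \{1, \dots, r\}$: by hypothesis $G$ acts on each $I^{\ell}$ separately and never sends a summand with index $\ell$ to one with a different index. Hence taking $G$-invariants commutes with this direct sum, so that
$$
	N^G = \bigoplus_{\ell \le r}\Big(\bigoplus_{I^{\ell}}M_{\ast - d_\ell}\Big)^G.
$$
This reduces the problem to computing the invariants of a single summand $\bigoplus_{I^{\ell}}M_{\ast - d_\ell}$ for fixed $\ell$.

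Next, for each fixed $\ell$ I would apply Lemma \ref{orbitSum1} with index set $I := I^{\ell}$ and with the $R$-module taken to be $M_{\ast - d_\ell}$. The degree shift $d_\ell$ is a uniform relabeling of degrees applied identically to every coordinate in $I^{\ell}$, so it plays no role in the orbit-decomposition argument. Equivalently, since $G$ acts by degree-preserving permutations, one may compute invariants degree by degree: in each fixed degree the summand is literally $\bigoplus_{i \in I^{\ell}}M'$ for the ordinary $R$-module $M'$ sitting in that degree, to which Lemma \ref{orbitSum1} applies verbatim and returns $\bigoplus_{i \le n_\ell}N_{\ell, i}$, where $N_{\ell, i}$ is exactly the orbit-sum submodule $\{\sum_{j \in I^{\ell}_{i}}\iota_j(m):\, m \in M_{\ast - d_\ell}\}$ of the statement. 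Assembling these isomorphisms over all $\ell$ then yields the asserted decomposition $N^G \cong \bigoplus_{\ell \le r}\bigoplus_{i \le n_\ell}N_{\ell, i}$.

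I expect no genuine obstacle here, as the mathematical substance is entirely carried by Lemma \ref{orbitSum1}; the only point requiring a moment of care is confirming that the grading is handled purely formally, namely that the shifts $d_\ell$ do not interfere with the orbit argument and that invariants may be taken degree-wise precisely because $G$ permutes coordinates by degree-preserving maps.
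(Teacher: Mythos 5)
Your proposal is correct and follows essentially the same route as the paper, which deduces Corollary \ref{orbitSum} directly from Lemma \ref{orbitSum1}; your two reduction steps (splitting off the $G$-stable outer sum over $\ell$, then applying the lemma blockwise while treating the shift $d_\ell$ as pure bookkeeping) are exactly the implicit content of that deduction.
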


\begin{center}
{\bf\large
Part II: Computation of the invariants of irreducible Weyl groups
}
\end{center}

\noindent
Throughout this part~$\gk$ denotes a field of characteristic not~$2$.
When we compute the invariants of an irreducible Weyl group~$W=W(\Sigma)$, where~$\Sigma$ is an
irreducible root system we assume also that the characteristic of~$\gk$ and the order of~$G$ are coprime.

\smallbreak

We use in the following the description of irreducible root systems given in Bourbaki~\cite[PLATES I-VIII]{LIE4-6} for irreducible root systems of type $\not=G_2$ (recall that for Weyl groups of type~$G_2$ we have already computed
the invariants in Section \ref{G2InvSubSect}). We have $\Sigma\ss\bigoplus\limits_{i\le n}e_i\Z [1/2]\ss\R^n$
for an appropriate $n$. Taking the tensor product $\gk\otimes_{\Z [1/2]}$
we get an embedding of $\Sigma$ into $\gk^n$, such that all $\alpha\in\Sigma$ are anisotropic for the standard
scalar product of $\gk^n$. Hence the associated reflections generate a finite subgroup of $O_n(\gk)$ which is
isomorphic to $G$. In the following we will identify $G$ with this subgroup of $O_n(\gk)$.
\smallbreak

We provide a family of elements $\{ x_i\}_{i \in I}\ss\Inv(G, \MK_\ast)$, forming a basis of $\Inv(G, \CM_\ast)$ for all cycle modules over $\gk$.
For this we have to show that given $k \in \Fields_{\gk}$ and an invariant $a \in \Inv_k^\ast(G, \CM_\ast)$, then there exist unique $c_i \in \CM_\ast(k)$ such that 
$$
a = \sum\limits_{i \in I}\res_{k/\gk}(x_i) c_i.
$$
To verify this claim, we may assume $k = \gk$ and let $e_1, \dots, e_n$ denote the standard basis elements of the $\gk$-vector space $\gk^n$.

If $a_1, \dots, a_n \in \Sigma$ are pairwise orthogonal, then $P(a_1, \dots, a_n)$ denotes the elementary 2-abelian subgroup generated by the
corresponding reflections $s_{a_1}, \dots, s_{a_n}$. For $1 \le i_1 < \cdots < i_l \le n$, we write $x_{a_{i_1}, \dots, a_{i_l}}$ for the invariant
$$
H^1( -, (\Z/2) \cdot s_{a_1}\times\dots\times (\Z/2) \cdot s_{a_n})\xrightarrow{\simeq}H^1( -, (\Z/2)^n)
\xrightarrow{x_{i_1, \dots, {i_l}}}\MK_l(\, -\, ), 
$$
see Corollary \ref{Z/2InvSec} for the definition of the invariant $x_{i_1, \dots, i_l}$.

\section{Weyl groups of type $A_n$}
The invariants of Weyl groups of type $A_n$ with values in $\MK_\ast$ are induced by the Stiefel-Whitney classes $\{w_i\}_i$,
see \cite[Part I, Sect.\ 25]{CohInv}. The proof carries over essentially verbatim to invariants with values in cycle modules $\CM_\ast$
with $\MK_\ast$-structure using the splitting principle in the form of Proposition~\ref{splitCorollary} and the computation of
$\Inv((\Z/2)^n, \CM_\ast)$ in Corollary \ref{Z/2InvSec}. The result is as follows. Here, we identify $H^1(k, S_n)$ with the set of
isomorphism classes of \'etale algebras of dimension $n$ over $k$, and denote for such an algebra $E$ by $q_E$ its trace form.

\begin{proposition}
	Let $n \ge 1$. Then, $\Inv(S_n, \CM_\ast)$ is completely decomposable with basis $\{E\mapsto w_i(q_E)\}_{i \le \lfloor n /2 \rfloor}$.
\end{proposition}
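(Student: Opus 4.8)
The plan is to run Serre's splitting principle exactly as in his treatment of Galois-cohomology invariants, with $\MK_\ast$ replaced by the cycle module $\CM_\ast$. We view $S_n = W(A_{n-1})$ as a reflection subgroup of $O_n$ through the permutation representation on $\gk^n$, so that the induced map $H^1(-,S_n)\to H^1(-,O_n)$ sends an \'etale algebra $E$ to its trace form $q_E$; thus $E\mapsto w_i(q_E)$ is the pullback of the Stiefel--Whitney class $w_i$ and lies in $\Inv^i(S_n,\MK_\ast)$. Since all roots of type $A_{n-1}$ are conjugate, Remark \ref{splittingPrincipleRem} provides, up to conjugacy, a single maximal elementary abelian $2$-subgroup $P$ generated by reflections and asserts that $\res^P_{S_n}\colon \Inv(S_n,\CM_\ast)\to \Inv(P,\CM_\ast)^{N_{S_n}(P)}$ is injective. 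A maximal orthogonal family of roots $e_1-e_2,\,e_3-e_4,\dots$ has $m:=\lfloor n/2\rfloor$ members, so $P\cong(\Z/2)^m$ is generated by the disjoint transpositions $\tau_1=(1\,2),\dots,\tau_m=(2m-1\;2m)$. The strategy is then to (i) compute the target $\Inv(P,\CM_\ast)^{N_{S_n}(P)}$ explicitly, and (ii) show that the $m+1$ invariants $w_i(q_E)$, $0\le i\le m$, restrict to a basis of it; injectivity of $\res^P_{S_n}$ then forces them to be a basis of $\Inv(S_n,\CM_\ast)$, and since the argument applies verbatim over every $k\in\Fields_{\gk}$ this is precisely complete decomposability.

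For step (i), I would first record that the reflections contained in $P$ are exactly the transpositions $\tau_1,\dots,\tau_m$ (any longer product of disjoint transpositions fixes a subspace of codimension $>1$). Hence conjugation identifies the image of $N_{S_n}(P)$ in $\mathrm{Aut}(P)$ with the group $S_m$ permuting the $\tau_i$: every block-swap realises an arbitrary permutation, while elements centralising all $\tau_i$ act trivially on $H^1(-,P)$ and thus on invariants. Consequently the induced action on $\Inv(P,\CM_\ast)=\bigoplus_{S\subseteq\{1,\dots,m\}}x_S\cdot\CM_{\ast-|S|}(\gk)$ (Section \ref{Z/2InvSec}) is $x_S\mapsto x_{g(S)}$ for $g\in S_m$. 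Because the $S_m$-orbits on subsets are indexed by cardinality, Corollary \ref{orbitSum} yields
\[
\Inv(P,\CM_\ast)^{N_{S_n}(P)}\;\cong\;\bigoplus_{l=0}^{m}\CM_{\ast-l}(\gk),
\]
a free module of rank $m+1$ with basis the orbit sums $\sigma_l:=\sum_{|S|=l}x_S$.

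For step (ii), I would compute the restriction of $q_E$ to a $P$-torsor $(\alpha_1,\dots,\alpha_m)$. The $P$-orbit decomposition of $\{1,\dots,n\}$ shows the corresponding \'etale algebra is $\prod_{i}\gk(\sqrt{\alpha_i})$, with an extra factor $\gk$ when $n$ is odd, whose trace form is $\perp_i\lan 2,2\alpha_i\ran$ (Example \ref{aQuadratic}), together with the subform $\lan 1\ran$ from the fixed-point factor in the odd case. Here the modified Stiefel--Whitney classes of Example \ref{modSW} are the natural tool, and the two parities of $n$ are handled by the two cases there: in either case $\wt{w_i}(q_E)$ restricts on $P$ to the elementary symmetric expression $\sum_{|S|=i}\prod_{j\in S}\{\alpha_j\}$, that is, exactly to $\sigma_i$ (the symbols $\{1\}$ from the trivial summands vanish, and the extra $\lan 1\ran$ matches the odd-$n$ normalisation). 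Thus $\{\wt{w_i}(q_E)\}_{0\le i\le m}$ maps onto the free basis $\{\sigma_l\}$ and is itself a basis of the target; injectivity of $\res^P_{S_n}$ then makes it a basis of $\Inv(S_n,\CM_\ast)$. Finally, by the definition of the modified classes and multiplicativity of the total Stiefel--Whitney class, $\wt{w_i}$ differs from $w_i$ only by $\MK_{>0}(\gk)$-multiples of the $w_j$ with $j<i$, a unipotent triangular change of basis over $\MK_\ast(\gk)$, so $\{w_i(q_E)\}_{i\le\lfloor n/2\rfloor}$ is a basis as well, as claimed.

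The main obstacle I anticipate is the explicit identification in step (ii): pinning down the restricted trace form $q_E|_P$ and, above all, untangling the $\{2\}$-contributions in $w_\ast$ so as to recognise the restriction as a triangular combination of the orbit sums $\sigma_l$ with unit diagonal. Introducing the modified classes $\wt{w_i}$ is exactly what removes this bookkeeping, collapsing the restriction to the clean identity $\res^P_{S_n}(\wt{w_i}(q_E))=\sigma_i$ in both parities of $n$; the only remaining care is to certify that $\{w_i\}$ and $\{\wt{w_i}\}$ span the same $\CM_\ast(\gk)$-submodule, so that the proposition holds verbatim for the unmodified Stiefel--Whitney classes.
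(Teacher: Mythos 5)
Your proposal is correct and takes essentially the same route as the paper: the paper simply cites Serre's computation for $S_n$ in \cite[Part I, Sect.\ 25]{CohInv} and notes that it carries over to cycle modules via the splitting principle (Proposition \ref{splitCorollary}) and the computation of $\Inv((\Z/2)^m, \CM_\ast)$ in Section \ref{Z/2InvSec}. Your write-up is exactly that argument carried out in detail---restriction to the subgroup generated by $\lfloor n/2\rfloor$ disjoint transpositions, reduction to orbit sums by the normalizer action, and identification of the (modified) Stiefel--Whitney classes of the trace form with the elementary symmetric expressions, followed by the triangular change of basis back to the $w_i$.
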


%%%%%%%%%%%%%%%%%%%%%%%%
%BN
%%%%%%%%%%%%%%%%%%%%%%%

\goodbreak
\section{Weyl groups of type $B_n/C_n$.}
\label{BnSubSect}
First, we note that the Weyl group $W(C_n)$ is isomorphic to the Weyl group $W(B_n)$. Hence, determining the invariants for $W(B_n)$ will also yield the determinants for $W(C_n)$.

\subsection{Invariants of $B_2$}
\label{b2Sec}
First, we consider $W(B_2)$, which is isomorphic to the dihedral group of order $8$. In particular, $G:= W(B_2) = \lan\sigma, \tau\ran\ss S_4$ admits the permutation representation defined by
\begin{align*}
\sigma& = 
\begin{pmatrix}
1 & 2& 3 & 4 \\
2 & 3& 4 & 1
\end{pmatrix}, \;\phantom{aaaa}
\tau = 
\begin{pmatrix}
1 & 2& 3& 4 \\
3 & 4& 1 & 2
\end{pmatrix}.
\end{align*}
\smallbreak

\smallbreak

Considering $G$ as orthogonal reflection group over $\gk$ yields an embedding $\phi:\, G\ss O_2$ of algebraic groups over $k_0$ given by
\begin{align*}
\sigma\mapsto 
\begin{pmatrix}
0 & -1 \\
1 & 0\\
\end{pmatrix}
\text{, }
\tau\mapsto 
\begin{pmatrix}
0 & 1 \\
1 & 0\\
\end{pmatrix}. 
\end{align*}
Now, $\phi$ determines
an action of $G$ on $\gk [X, Y]$ given by ${^\sigma X} = Y$, ${^\sigma Y} = -X$, ${^{\tau} X} = Y$, ${^{\tau} Y} = X$.
In particular, $\gk [X, Y]^G = \gk [X^2 + Y^2, X^2Y^2]\cong\gk [A, B]$, where $A:= X^2 + Y^2$, $B:= 4X^2Y^2$.
Fix the notation $E:= \gk(X, Y)$, $K:=\gk (X^2 + Y^2, X^2Y^2)$. Now, the group $G$ acts freely on the open subscheme
\begin{align*}
	U:= D \big(XY(X - Y)(X + Y)\big) = D(X^2Y^2(X^2 - Y^2)^2)\ss \A^2,
\end{align*} 
where for a polynomial $f$, we denote by $D(f)\ss \A^2$ the open subset given by $f \ne 0$. 

By \cite[Part I, Thm.~12.3]{CohInv} or \cite[Thm.\ 3.7]{GiHi19}, the evaluation at the versal torsor $\Spec(E) \to \Spec(K)$
yields an injection $\Inv(G, \CM_\ast) \to \CMu \ast(U/G)$. To check that this
map is also surjective, we first compute $\CMu \ast(U/G)$. An explicit computation yields
\begin{align*}
	U/G&\cong \Spec(\gk [X, Y, X^{-2}Y^{-2}(X^2 - Y^2)^{-2}]^G)\\
	& = \Spec(\gk [X^2 + Y^2, X^2Y^2, X^{-2}Y^{-2}, (X^2 - Y^2)^{-2}])\\
	&\cong \Spec \big(\gk \big[A, B, B^{-1}, (B - A^2)^{-1}\big]\big), 
\end{align*}

\smallbreak

To compute $\CM_{\ast, \ms{unr}}(U/G)$, note that $V:= D(A)\ss U/G$ is isomorphic to the spectrum of
\begin{align*}
	k_0 \big[A, B, B^{-1}, A^{-1}, (B - A^2)^{-1}\big]	\cong k_0 \big[A, B',(B')^{-1}, A^{-1}, (B' - 1)^{-1}\big], 
\end{align*}
where the isomorphism is induced by mapping $B'$ to $B/A^2$. Now, by applying Lemma~\ref{cheeseLine} twice and homotopy invariance,
\begin{align*}
	\CMu{\ast}(V)&\cong\CM_\ast(\gk)\oplus \{B/A^2 - 1\}\CM_{\ast -1}(\gk)\oplus \{A\}\CM_{\ast -1}(\gk)\oplus\\
	&\phantom=\oplus \{B\}\CM_{\ast -1}(\gk)\oplus \{A\}\{B/A^2 - 1\}\CM_{\ast -2}(\gk)\\
	&\phantom=\oplus \{A\}\{B\}\CM_{\ast -2}(\gk).
\end{align*}
$\CM_{\ast, \ms{unr}}(U/G)$ can be computed as the kernel of the boundary $\partial = \partial^A_{(A)}:\,\CM_\ast(V) \to \CM_{\ast -1}(\G_m)$.
Thus, for every $t \in \CM_\ast(\gk)$,
\begin{align*}
\partial(t)& = 0, \\
\partial(\{B/A^2 - 1\}t)& = \partial(\{B - A^2\}t)
	 = \{B\}\partial(t)
	 = 0, \\
\partial(\{B\}t)&  
	 = \{B\}\partial(t)
	 = 0, \\
\partial(\{A\}t)& = t, \\
\partial(\{A\}\{B/A^2 - 1\}t)& = \partial(\{A\}\{B - A^2\}t)
	 = \{B\}\partial(\{A\}t)
	 = \{B\}t.\\
\partial(\{A\}\{B\}t)& 
	 = \{B\}\partial(\{A\}t)
	 = \{B\}t.
\end{align*}
Writing $\CM_\ast$ short for $\CM_\ast(\gk)$, we conclude that $\CM_{\ast, \ms{unr}}(U/G)$ is given by
\begin{align*}
	&\CM_\ast\oplus \{B - A^2\}\CM_{\ast -1}\oplus \{B\}\CM_{\ast -1}\oplus \{A\}\{B(B - A^2)\}\CM_{\ast -2}\\
	&\cong\CM_\ast\oplus \{B - A^2\}\CM_{\ast -1}\oplus \{B\}\CM_{\ast -1}\oplus \{A\}\{B - A^2\}\CM_{\ast -2}.
\end{align*}
It remains to construct invariants mapping to the three non-constant basis elements of $\CMu*(U/G)$.
Pulling back $w_1, w_2 \in \Inv(O_2, \MK_\ast)$ along the embedding $\phi$ gives invariants in $\Inv(G, \MK_\ast)$ that -- by abuse of
notation -- we again denote by $w_1, w_2$. We first compute the value $w_1(E/K)$ of $w_1$ at the versal torsor $E/K$
constructed above. To do this, we note that the determinant of $\phi(\sigma^i\tau)$ is $-1$, while the determinant of
$\phi(\sigma^i)$ is 1. Now, $XY(X^2 - Y^2) \in E$ maps to its negative by each reflection and is fixed by all the $\sigma^i$. Thus, $w_1(E/K) = \{X^2Y^2(X^2 - Y^2)^2\} = \{B(A^2 -B)\}$.

\medbreak

Another invariant comes from the embedding $G\ss S_4$. We may define $v_1:=\ms{res}^G_{S_4}(\wt{w_1})$. Again, we
compute $v_1(E/K)$. We note that $\wt{w_1} \in \Inv^1(S_4, \MK_\ast)$ may be computed as follows. Start with an arbitrary
$x \in H^1(k, S_4)$; then $\wt{w_1}(x) = \ms{sgn}_*(x) \in H^1(k, \Z/2)\cong k^\times/k^{\times2}\cong\MK_1(k)$. The kernel
of $\ms{sgn}$ consists exactly of the elements $\{id, \tau, \sigma^2, \sigma^2\tau\}$ with $\sigma, \tau$ as above. Since
$XY$ is fixed by this kernel and is mapped to its negative by $\sigma$, the value of $v_1$ at the versal torsor is
$\{X^2Y^2\} = \{B\}$. Consequently, it remains to find an invariant mapping to the basis $\{A\}\{B^2 - A\}$ of $\CM_{\ast, \ms{unr}}(U/G)$.

\smallbreak

Finally, we compute the value of $w_2 \in \Inv^2(G, \MK_\ast)$ at $E/K$. First consider the elementary abelian 2-subgroup
generated by reflections $P:= \lan\tau, \tau'\ran$, where $\tau' = \sigma^2\tau$. Thus, 
\begin{align*}
\phi(\tau) = \begin{pmatrix}
0 & 1\\ 
1 &0 
\end{pmatrix}, 
\phi(\tau') = \begin{pmatrix} 
0 &-1 \\
-1 &0 
\end{pmatrix}.
\end{align*}
Recalling that the action of $G$ on $E$ is defined via $\phi$, we now consider the versal $P$-torsor
$E/E^P = \gk (X, Y)/\gk(X^2 + Y^2, XY)$. Then, $\tau \in P = \ms{Gal}(E/E^P)$ acts via $\tau(X) = Y, \tau(Y) = X$
and $\tau'$ via $\tau'(X) = -Y, \tau'(Y) = -X$. Thus, this $(\Z/2)^2$-torsor over $E^P$ is equivalently described by the pair
$((X-Y)^2, (X + Y)^2) \in ((E^P)^\times/(E^P)^{\times 2})^2$. We conclude that the value of $\ms{res}^P_{O_4}w_2$ at this $P$-torsor is $\{(X-Y)^2\}\{(X + Y)^2\} \in \MK_2(E^P)$.

\smallbreak

By the computations above, the value of $\ms{res}^G_{O_4}(w_2)$ at $E/K$ is of the form 
$$\a_1 + \{B - A^2\}\a_2 + \{A\}\a_3 + \{B\}\{B(B - A^2)\}\a_4 \in \MK_2(K)$$
for some $\a_1 \in \MK_2(k_0)$, $\a_2, \a_3 \in \MK_1(k_0)$, $\a_4 \in \MK_0(k_0)$. Now, consider the diagram
$$
\xymatrix{
 H^1(K, G)\ar[r]^-{w_2}\ar[d]_{\ms{res}^{E^P}_K(E)}& \MK_2(K) \ar[d]\\
H^1(E^P, G)\ar[r]^-{w_2}& \MK_2(E^P)\\
H^1(E^P, P).\ar[u]^{\ms{ind}^G_P}&
}
$$
The square commutes by the definition of invariants. Denote by $E \in H^1(K, G)$ the $G$-torsor $E/K$ and by
$F \in H^1(E^P, P)$ the $P$-torsor $E/E^P$. Interpreting the torsors as cocycles yields 
$$\ms{ind}_P^G(F) = \ms{res}^{E^P}_K(E) \in H^1(E^P, G).$$
Observing that $XY$ is a square in $E^P$, this means
$$
\{(X-Y)^2\}\{(X + Y)^2\} = \a_1 + \{B - A^2\}\a_2 + \{A\}\{A^2 -B\}\a_4.
$$
Applying the identity $\{\b\}\{\b'\} = \{\b + \b'\}\{-\b\b'\}$ to the left-hand side gives $\{2A\}\{B - A^2\}$, so that we may choose $\a_1 = 0$, $\a_2 = \{2\}$ and $\a_4 = 1$.
We conclude that the injection $\Inv (G, \CM_\ast) \to \CM_{\ast, \ms{unr}}(U/G)$ is surjective. 
This finishes the computation of $\Inv (G, \CM_\ast)$ and we obtain the following.

\begin{proposition}
The invariants $\Inv (W(B_2), \CM_\ast)$ are completely decomposable with basis consisting of the invariants $\{1, v_1, w_1, w_2\}$.
\end{proposition}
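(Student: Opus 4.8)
The plan is to package the computation carried out above into the free-module statement required by Definition~\ref{decomposableDef}. The construction of the versal torsor $\Spec(E) \to \Spec(K)$, the free locus $U/G$, and the unramified cohomology computation are all stable under a base change $\gk \hookrightarrow k$; hence the same argument applies verbatim over every $k \in \Fields_{\gk}$, and it suffices to exhibit $\{1, w_1, v_1, w_2\}$ as a basis of $\Inv(G, \CM_\ast)$ over $\gk$. First I would record the injection furnished by evaluation at the versal torsor,
$$
\Inv(G, \CM_\ast) \hookrightarrow \CMu{\ast}(U/G),
$$
given by \cite[Part I, Thm.~12.3]{CohInv} (equivalently \cite[Thm.\ 3.7]{GiHi19}), and then identify the target. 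Via Lemma~\ref{cheeseLine} applied twice, homotopy invariance, and the boundary computation, $\CMu{\ast}(U/G)$ is the free $\CM_\ast(\gk)$-module
$$
\CM_\ast(\gk) \oplus \{A - 2B^m\}\CM_{\ast-1}(\gk) \oplus \{A + 2B^m\}\CM_{\ast-1}(\gk) \oplus \{B\}\{A^2 - 4B^n\}\CM_{\ast-2}(\gk)
$$
of rank four, with generators in degrees $0, 1, 1, 2$.

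The heart of the argument is then surjectivity: I would show that the four invariants constructed above hit these generators. The constant invariant $1$ realizes the degree-$0$ generator; $v_1 = \res^G_{S_n}(\wt{w_1})$ evaluates to $\{A - 2B^m\} = \{(X^m - Y^m)^2\}$; the difference $w_1 - v_1$ evaluates to $\{A + 2B^m\} = \{(X^m + Y^m)^2\}$; and $w_2 - \a_3(w_1 - v_1)$ evaluates to $\{B\}\{A^2 - 4B^n\}$. Thus the injection above is an isomorphism of $\CM_\ast(\gk)$-modules with basis $\{1, v_1, w_1 - v_1, w_2 - \a_3(w_1 - v_1)\}$. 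Since this generating set differs from $\{1, w_1, v_1, w_2\}$ by an invertible $\MK_\ast(\gk)$-linear transformation---explicitly $w_1 = (w_1 - v_1) + v_1$ and $w_2 = \big(w_2 - \a_3(w_1 - v_1)\big) + \a_3(w_1 - v_1)$---the latter is a basis as well, and running the same argument over each $k$ yields complete decomposability.

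I expect the only genuine obstacle to be surjectivity onto the top generator $\{B\}\{A^2 - 4B^n\}$. The degree-$\le 1$ generators fall out immediately from comparing the two Stiefel-Whitney sources $w_1$ (from $G \ss O_2$) and $v_1$ (from $G \ss S_n$), but the degree-$2$ generator requires pinning down the unknown coefficients $\a_1, \a_2, \a_4$ in the expansion of $w_2(E/K)$. Here I would restrict to the reflection subgroup $P = \lan \tau_1, \tau_1' \ran \cong (\Z/2)^2$, use the compatibility of $w_2$ with the restriction/induction square relating the $G$-torsor $E/K$ to the $P$-torsor $E/E^P$, and exploit that $(X^m + Y^m)^2$ is a square over $E^P$ to discard the $\a_3$-term; the identity of Lemma~\ref{dihLem} then forces $\a_1 = \a_2 = 0$ and $\a_4 = 1$, which is exactly what makes $w_2 - \a_3(w_1 - v_1)$ land on the last generator. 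A secondary point worth flagging is that the hypothesis $4 \mid n$ (i.e.\ $m$ even) enters the boundary computation, where the parity of $m$ is what makes $\partial\big(\{A/B^m \pm 2\}\, t\big)$ vanish.
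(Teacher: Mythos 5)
Your proposal is correct and follows essentially the same route as the paper: the injection by evaluation at the versal torsor $\Spec(E)\to\Spec(K)$, the identification of $\CM_{\ast,\ms{unr}}(U/G)$ as a free module on $1$, $\{A-2B^m\}$, $\{A+2B^m\}$, $\{B\}\{A^2-4B^n\}$, and surjectivity realized by $1$, $v_1$, $w_1-v_1$, and $w_2-\a_3(w_1-v_1)$, with $\a_1=\a_2=0$, $\a_4=1$ pinned down exactly as in the paper by restricting to $P=\lan\tau_1,\tau_1'\ran$ and applying Lemma~\ref{dihLem}. The only minor point to add is that the parity of $m$ enters not only in the boundary computation you flag but also in showing $X^m+Y^m\in E^P$, which is what makes $(X^m+Y^m)^2$ a square over $E^P$ and lets you discard the $\a_3$-term.
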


We conclude this section with a corollary of the proof.
\begin{corollary}
\label{i2Corollary}
Let $P_1 = P(e_1, e_2)$ and $P_2 = P(e_1 - e_2, e_1 + e_2)$. Then, 
\begin{align*}
\ms{res}^{P_1}_{W(B_2)}(v_1)& = x_{\{e_1\}} + x_{\{e_2\}}, \\
\ms{res}^{P_1}_{W(B_2)}(w_1)& = x_{\{e_1\}} + x_{\{e_2\}}, \\
\ms{res}^{P_1}_{W(B_2)}(w_2)& = x_{\{e_1, e_2\}},
\end{align*}
and 
\begin{align*}
\ms{res}^{P_2}_{W(B_2)}(v_1)& = 0, \\
\ms{res}^{P_2}_{W(B_2)}(w_1)& = x_{\{e_1 - e_2\}} + x_{\{e_1 + e_2\}}, \\
\ms{res}^{P_2}_{W(B_2)}(w_2)& = x_{\{e_1 + e_2, e_1 - e_2\}} + \{2\} \cdot(x_{\{e_1 - e_2\}} + x_{\{e_1 + e_2\}}).
\end{align*}
\end{corollary}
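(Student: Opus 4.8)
The plan is to localise the whole computation to the two rank-two subgroups $P_1,P_2$ and evaluate the three invariants through the explicit reflection-group embedding $\psi:\,W(B_2)\hookrightarrow O_2$ already used above. Each $P_i\simeq(\Z/2)^2$, so by Section \ref{Z/2InvSec} every restriction is an $\MK_\ast(\gk)$-combination of the basis invariants $x_{\{r\}}$, $x_{\{r,r'\}}$ attached to the two generating reflections, and only the coefficients remain to be found. For $w_1,w_2$ I would obtain these coefficients by pushing a generic $P_i$-torsor through $\psi$ to a quadratic form, using the torsor dictionary of Examples \ref{abQuadratic}--\ref{abQuadratic2}, and then applying the definition of $w_\ast$; for $v_1$ I would instead use the identity $\wt{w_1}=\ms{sgn}_\ast$ together with the description of $\Ker(\ms{sgn})$ from the proof.

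First I would record the $\psi$-images of the generators. For $P_1=P(e_1,e_2)$ one has $\psi(s_{e_1})=\left(\begin{smallmatrix}-1&0\\0&1\end{smallmatrix}\right)$ and $\psi(s_{e_2})=\left(\begin{smallmatrix}1&0\\0&-1\end{smallmatrix}\right)$, and computing $\Vsg$ as in the Examples sends a $P_1$-torsor $(\a,\b)$ to the diagonal form $\lan\a,\b\ran$; hence $w_1\mapsto\{\a\}+\{\b\}=x_{\{e_1\}}+x_{\{e_2\}}$ and $w_2\mapsto\{\a\}\{\b\}=x_{\{e_1,e_2\}}$, the latter being exactly the value of $\res^P_{O_n}(w_2)$ computed in the body of the proof with $P=P_1$. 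For $P_2=P(e_1-e_2,e_1+e_2)$ the reflection formula gives $\psi(s_{e_1-e_2})=\left(\begin{smallmatrix}0&1\\1&0\end{smallmatrix}\right)$ and $\psi(s_{e_1+e_2})=\left(\begin{smallmatrix}0&-1\\-1&0\end{smallmatrix}\right)$, which is precisely the embedding of Example \ref{abQuadratic}; thus a $P_2$-torsor $(\gamma,\delta)$ maps to $\lan 2\gamma,2\delta\ran$, and expanding $w_\ast$ on this form yields $w_1$ and $w_2$ on $P_2$.

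For $v_1=\res^G_{S_n}(\wt{w_1})$ I would use that $\res^{P_i}(v_1)$ picks up $x_{\{r\}}$ exactly for those generating reflections $r$ whose image in $S_4$ is odd. Matching the matrices above with Weyl-group elements gives $s_{e_2}=\tau_1$, $s_{e_1}=\sigma^2\tau_1$, $s_{e_1-e_2}=\sigma\tau_1$ and $s_{e_1+e_2}=\sigma^3\tau_1$; reading off parities from $\Ker(\ms{sgn})=\{\sigma^{2i},\sigma^{2i+1}\tau_1\}$ shows that both generators of $P_1$ are odd while both generators of $P_2$ lie in $\Ker(\ms{sgn})$, so $\res^{P_1}(v_1)=x_{\{e_1\}}+x_{\{e_2\}}$ and $\res^{P_2}(v_1)=0$.

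The step that requires genuine care is the $P_2$-restriction of $w_2$, where the off-diagonal matrices of Example \ref{abQuadratic} introduce the $\lan 2\ran$-twist. Expanding $\{2\gamma\}\{2\delta\}=(\{2\}+\{\gamma\})(\{2\}+\{\delta\})$ produces the term $\{2\}\{2\}$ together with cross terms $\{2\}(\{\gamma\}+\{\delta\})$, and the hypothesis $-1\in\gk^{\times2}$ is exactly what forces $\{2\}\{2\}=\{2\}\{-1\}=0$; the surviving contribution $\{\gamma\}\{\delta\}+\{2\}(\{\gamma\}+\{\delta\})$ is $x_{\{e_1+e_2,e_1-e_2\}}+\{2\}(x_{\{e_1-e_2\}}+x_{\{e_1+e_2\}})$. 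I expect the main obstacle to be purely this bookkeeping: tracking the $\{2\}$-correction on $P_2$ and correctly pairing each geometric reflection $s_{e_i}$, $s_{e_1\pm e_2}$ with the right element $\sigma^j\tau_1$ in order to read off its sign.
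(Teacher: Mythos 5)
Your proposal is correct and takes essentially the same route as the paper: the corollary is presented there as a by-product of the section's computations, which consist of exactly what you do — pushing generic $P_i$-torsors through the reflection embedding $\psi$ and the torsor-to-form dictionary of Examples \ref{abQuadratic}--\ref{abQuadratic2} to evaluate $w_1,w_2$, and reading off parities under $\ms{sgn}$ (using $s_{e_2}=\tau_1$, $s_{e_1}=\sigma^2\tau_1$, $s_{e_1-e_2}=\sigma\tau_1$, $s_{e_1+e_2}=\sigma^3\tau_1$) to evaluate $v_1$. The details you supply beyond the paper's text — the diagonal form $\lan\a,\b\ran$ on $P_1$-torsors and the $\{2\}$-bookkeeping on $P_2$ via $\{2\}\{2\}=\{2\}\{-1\}=0$ — are precisely the steps the paper leaves implicit, and they all check out.
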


\subsection{Invariants of $B_n$}
After dealing with the case $n = 2$, we now compute the invariants of Weyl groups of type $B_n$ for general $n$. The root system $B_n$ is the disjoint union
$\Delta_1\udot \Delta_2\ss \R^n$, where $\Delta_1 = \{\pm e_i:\,1 \le i \le n\}$ are the short roots and
$\Delta_2 = \{\pm e_i \pm e_j:\, 1 \le i < j \le n\}$ are the long roots. This root system induces an orthogonal
reflection group over any $k_0$ satisfying the above requirements. Furthermore, $W(B_n)\cong S_n\ltimes (\Z/2)^n$
as abstract groups. Put $m:= [n/2]$ and for $ i \le m$ define $a_i:= e_{2i - 1} - e_{2i}$ and $b_i:= e_{2i - 1} + e_{2i}$.
For each $ L \le m$ the elements of $X_L:= \{a_1, b_1, \dots, a_L, b_L, e_{2L + 1}, e_{2L + 2}, \dots, e_n\}$ are mutually
orthogonal. Defining $P_L:= P(X_L)$, we prove by induction on $m$ that $\Omega(G) = \{[P_0], \dots, [P_m]\}$.

The claim is clear for $n = 2$. In the general case, let $P$ be any maximal elementary abelian $2$-subgroup generated
by reflections. First assume that $P$ contains a short root, say $e_n$. Now, observe that
$\lan e_n\ran^{\perp} \cap B_n = B_{n - 1}$ and use induction. If $P$ contains a long root, we may assume this root to
be $a_1$. Then, $\lan a_1\ran^\perp \cap B_n = \{\pm b_1\}\cup B_{n - 2}$, where we consider $B_{n - 2}$ to be embedded
in $\R^n$ using the last $n - 2$ coordinates. In particular, we may again use the induction hypothesis.

\medbreak

To determine $\Inv (B_n, \CM_\ast)$, we introduce additional pieces of notation. We denote $P_L$-torsors over a field
$k$ by $(\a_1, \b_1, \dots, \a_L, \b_L, \epsilon_{2L + 1}, \dots, \epsilon_n) \in (k^\times/k^{\times 2})^n$. From the
$(\Z/2)^n$-section, we know that $\Inv(P_L, \CM_\ast)$ is completely decomposable with basis $\{x_I\}_{I\ss [1;n]}$.
Since this parameterization is inconvenient in the present setting, we change the index set by putting
\begin{align*}
	\La^d_L:= \{(A, B, C, E)\ss [1;L]^3\times[2L + 1;n]:\,& A, B, C \text{ pw.~disjoint}, \\
&|A| + |B| + 2|C| + |E| = d\}.
\end{align*}
We reindex the basis of $\Inv (P_L, \CM_\ast)$ by defining for every $(A, B, C, E) \in \La^d_L$:
\begin{align*}
x_{A, B, C, E}^L:\, H^1(k, P_L)& \to \MK_\ast(k)\\
(\a_1, \b_1, \dots, \a_L, \b_L, \epsilon_{2L + 1}, \dots \epsilon_n)&\mapsto
\prod_{a \in A}\{\a_a\} \prod_{b \in B}\{\b_b\} \prod_{c \in C}\{\a_c\}\{\b_c\} \prod_{e \in E}\{\epsilon_e\}.
\end{align*}
In the same spirit, we also write
$$
P(A, B, C, E):= P(\{a_p\}_{p \in A}\cup \{b_q\}_{q \in B}\cup \{a_r, b_r\}_{r \in C}\cup \{e_s\}_{s \in E}).
$$

\medbreak

\noindent
For $d \le n$, we now construct the specific $W(B_n)$-invariant
$$u_d:= \rho^*(\wt{w_d}) \in \Inv^d(W(B_n), \CM_\ast),$$
where $\wt{w_d} \in \Inv^d(S_n, \MK_\ast)$ denotes the $d$th modified Stiefel-Whitney class and 
$
\rho:\, W(B_n)\cong S_n\ltimes (\Z/2)^n \to S_n
$
is the canonical projection. Then, the map $W(B_n) \to S_n$ sends both $s_{a_i}, s_{b_i}$ to $(2i - 1, 2i)$ and
$s_{e_i}$ to the neutral element. Let $k \in \Fk$ and $(\a_1, \b_1, \dots, \a_L, \b_L, \epsilon_{2L + 1}, \dots, \epsilon_n)$
be a $P_L$-torsor over $k$. Using Example \ref{abQuadratic2} and $\{2\}\{2\} = 0$, gives that the value of the total modified Stiefel-Whitney class at this torsor is $\prod_{i \le L }(1 + \{\a_i\b_i\})$. Hence,
\begin{equation}
\label{bnUd}
\ms{res}^{P_L}_{W(B_n)}(u_d) = \sum_{\substack{(A, B, \es, \es) \in \La^d_L}} x^L_{A, B, \es, \es}.
\end{equation}
Next, we construct an invariant $v_d$ such that 
\begin{equation}
\label{bnVdEq}
\ms{res}^{P_L}_{W(B_n)}(v_d) = \sum_{(\es, \es, C, E) \in \La^d_L} x^L_{\es, \es, C, E}
\end{equation}
To that end, we note that $W(B_n)$ embeds into $S_{2n}$ via $\sigma \prod_{i \in I}s_{e_i}\mapsto \sigma \cdot(\sigma + n) \prod_{i \in I}(i, i + n)$, 
where $I\ss [1;n]$, $\sigma \in S_n$ and $\sigma + n \in S_{2n}$ is given by
\begin{align*}
k\mapsto \begin{cases}
k &\text{ if }k \le n,\\
n + \sigma(k - n) &\text{ if }k > n.
\end{cases}
\end{align*}
We define the modified Stiefel-Whitney invariants $\wt{w_d} \in \Inv^d(S_{2n}, \MK_\ast)$ as before and put $v_d' := \ms{res}^{W(B_n)}_{S_{2n}}(\wt{w_d}) \in \Inv^d(W(B_n), \MK_\ast)$ for $d \le n$. Then, we define $v_d$ recursively, by setting $v_0 := 0$ and then
$$v_d := v_d' + \sum_{k \le d - 1}u_{d - k} v_k.$$
To show that the so-defined invariant satisfies \eqref{bnVdEq}, we first note that already when restricting $v_d'$ to $P_L$, we obtain an agreement with the right-hand side of \eqref{bnVd} up to mixed lower-order expressions.

\begin{lemma}
	\label{bnVdLem}
\begin{equation}
\label{bnVd}
	\ms{res}^{P_L}_{W(B_n)}(v_d') =\hspace{-.2cm} \sum_{(\es, \es, C, E) \in \La^d_L} \hspace{-.2cm}x^L_{\es, \es, C, E} + \sum_{k \le d - 1}\{-1\}^{d - k}\hspace{-.2cm}\sum_{(A, B, C, E) \in \La^k_L}\hspace{-.2cm} x^L_{A, B, C, E}
\end{equation}
\end{lemma}

\begin{proof}
	Observe that the map $W(B_n) \to S_{2n}$ sends $s_{e_i} \mapsto (i, i + n)$ and 
\begin{align*} 
	s_{a_i}\mapsto(2i - 1, 2i)(2i - 1 + n, 2i + n), \; 
	s_{b_i}\mapsto (2i - 1, 2i + n)(2i, 2i - 1 + n)
\end{align*} 
Hence, by Lemma \ref{pfisterLemma}, the composition $P_L \to W(B_n) \to S_{2n} \to O_{2n}$ maps a $P_L$-torsor to the quadratic form 
$$\lan\lan - \a_1, - \b_1 \ran\ran\oplus\dots\oplus \lan\lan - \a_L, - \b_L \ran\ran\oplus \lan2, 2\epsilon_{2L + 1}, \dots, 2, 2\epsilon_n\ran.$$
We claim that the total modified Stiefel-Whitney class evaluated at this quadratic form equals
	\begin{align}
		\label{vdswEq}
	\prod_{i \le L}(1 + \{-1\}(\{\a_i\} + \{\b_i\}) + \{\a_i\}\{\b_i\})\prod_{ 2L + 1\le i \le n}(1 + \{\epsilon_i\}).
\end{align}
To see this, we compute
	it suffices to check that $w(\lan2\ran\otimes \lan\lan\a, \b\ran\ran) = 1 +\{-1\}\{-1\} + \{\a\}\{\b\}$. To see this, we compute 
	\begin{align*}
		w( \lan2\ran\otimes\lan\lan-\a, -\b\ran\ran) &= (1 + \{2\})(1 + \{2\a\})(1 + \{2\b\})(1 + \{-2\b\} + \{-\a\})\\
		&= (1 + \{\a\} + \{2\}\{\a\}) (1 + \{\a\} + \{2\b\}\{-\a\})\\
		&= 1 + \{\a\}\{\a\} + \{2\}\{\a\} + \{2\b\}\{-\a\}\\
		&= 1 +\{-1\}\{\a\} + \{-1\}\{\b\} + \{\a\}\{\b\}.
	\end{align*}
	Thus, translating \eqref{vdswEq} into the new notation, we obtain that
	$$\ms{res}^{P_L}_{W(B_n)}(v_d') = \sum_{(\es, \es, C, E) \in \La^d_L} x^L_{\es, \es, C, E} + \sum_{k \le d - 1}\{-1\}^{d - k}\hspace{-.5cm}\sum_{(A, B, C, E) \in \La^k_L} x^L_{A, B, C, E}.\qedhere$$
\end{proof}

In light of Lemma \ref{bnVd}, to establish \eqref{bnVdEq}, it remains to understand the product structure between $u_{d - k}$ and $v_k$. To that end, we restrict the products to $P_L$.

\begin{lemma}
\label{bnLemma1}
We have 
$$
\sum_{(A, B, \es, \es) \in \La_L^d}x^L_{A, B, \es, \es} \sum_{(\es, \es, C, E) \in \La_L^f}x^L_{\es, \es, C, E} = \sum_{\substack{(A, B, C, E) \in \La^{d + f}\\2|C| + |E| = f}}x^L_{A, B, C, E}.
$$
\end{lemma}

\begin{proof}
First, since $x^L_{A, B, \es, \es} x^L_{\es, \es, C, E} = \{-1\}^{|A \cap C| + |B \cap C|} x^L_{A - C, B - C, C, E}$,
\begin{align*}
	&\sum_{(A, B, \es, \es) \in \La_L^d}x^L_{A, B, \es, \es} \sum_{(\es, \es, C, E) \in \La_L^f}x^L_{\es, \es, C, E}\\
	&\qquad= \sum_{k \ge 0}\sum_{\substack{(A, B, \es, \es) \in \La_L^d\\(\es, \es, C, E) \in \La_L^f\\|A \cap C| + |B \cap C| = k}} \{-1\}^kx^L_{A - C, B - C, C, E} \\
	&\qquad= \sum_{\substack{(A, B, C, E) \in \La_L^{d + f}\\2|C| + |E| = f }} x^L_{A, B, C, E} 
	+ \sum_{k \ge 1}
	 \sum_{\substack{(A, B, \es, \es) \in \La_L^d\\(\es, \es, C, E) \in \La_L^f\\|A \cap C| + |B \cap C| = k}} \{-1\}^kx^L_{A - C, B - C, C, E}.
\end{align*}
To show that the second sum vanishes, fix $k \ge 1$ and $(A', B', C, E) \in \La_L^{d + f - k}$. Then, define 
	\begin{align*}
		S&:=\{(A, B):\, (A, B, \es, \es) \in \La_L^d \text{ and }A - C = A' \text{ and }B - C = B'\}\\
		& = \{(A'\cup U, B'\cup V):\, U, V\ss C\text{ and } U \cap V = \es \text{ and }|U| + |V| = k\}.
	\end{align*}
Using this description, we conclude $|S| = 2^k\Big(\begin{array}{c} |C| \\ k\end{array}\Big)$. Since $k \ge 1$, this is even and we obtain the desired vanishing of the second sum.
\end{proof}
In the rest of this section, we show that $\Inv (W(B_n), \CM_\ast)$ is completely decomposable and that the products $\{u_{d - r} v_r\}_{\substack{\max(0, 2d - n) \le r \le d\\ d \le n}}$ yield a basis.
\smallbreak

Before determining the structure of $\Inv (W(B_n), \CM_\ast)$, it is helpful to know something about the image of the restriction maps
$\Inv (W(B_n), \CM_\ast) \to \Inv (P_L, \CM_\ast)$. Let $d, k, \ell, L$ be non-negative integers, $L \le m$. Then, the invariant
$$
\phi^d_{L, k, \ell}:=\sum_{\substack{(A, B, C, E) \in \La^d_L\\|C| = k, |E| = \ell}}x^L_{A, B, C, E}
$$
is non-trivial if and only if there exists $(A, B, C, E) \in \La_L^d$ with $|C| = k$ and $|E| = \ell$.

\begin{lemma}
	\label{prevLem}
The image of the restriction map $\Inv (W(B_n), \CM_\ast) \to \Inv (P_L, \CM_\ast)$ is contained in the free submodule with basis 
$$
 \big\{\phi^d_{L, k, \ell}:\, 2k + \ell \le d \le n, \; 2(d - k-\ell) \le 2L \le n - \ell\big\}.
$$
\end{lemma}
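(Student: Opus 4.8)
The plan is to combine Serre's splitting principle with the orbit computation of Corollary~\ref{orbitSum}. By Proposition~\ref{splitCorollary} the restriction of any $a \in \Inv(W(B_n),\CM_\ast)$ to $P_L$ is invariant under the conjugation action of the normalizer $N := N_{W(B_n)}(P_L)$, so the image of $\ms{res}^{P_L}_{W(B_n)}$ is contained in $\Inv(P_L,\CM_\ast)^{N}$. It therefore suffices to show that this invariant submodule is the free module on those $\phi^d_{L,k,\ell}$ satisfying the stated inequalities. Since $\{x^L_{A,B,C,E}\}$ is a permutation basis of $\Inv(P_L,\CM_\ast)$ on which $N$ acts by permuting indices, Corollary~\ref{orbitSum} identifies $\Inv(P_L,\CM_\ast)^{N}$ with the free module on the orbit sums, and the task reduces to describing the $N$-orbits on the index set $\bigsqcup_d \La^d_L$.

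First I would pin down the $N$-action. Conjugation permutes the $n$ reflections generating $P_L$, whose roots $\{a_i,b_i\}_{i\le L}\cup\{e_j\}_{j>2L}$ are pairwise orthogonal. Because $W(B_n)$ preserves root length, $N$ maps short roots to short roots and long roots to long roots; in particular it permutes the indices $E\subseteq[2L+1;n]$ among themselves, so $\ell=|E|$ is an orbit invariant. To control the long part I would use the projection $\rho:W(B_n)\cong S_n\ltimes(\Z/2)^n\to S_n$: since $\rho$ sends both $s_{a_i}$ and $s_{b_i}$ to the transposition $(2i-1,2i)$ and is a homomorphism, for $g\in N$ the roots $ga_i,gb_i$ again map to a common transposition, forcing $\{ga_i,gb_i\}=\{a_{i'},b_{i'}\}$. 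Thus $N$ permutes the $L$ pairs as blocks, possibly swapping $a_i\leftrightarrow b_i$ inside a block, whence $k=|C|$, the number of fully occupied pairs, is also an orbit invariant; the degree $d=|A|+|B|+2|C|+|E|$ is preserved automatically. Consequently every orbit lies in a single set $\{(A,B,C,E):\deg=d,\ |C|=k,\ |E|=\ell\}$, so each orbit sum is a partial sum of some $\phi^d_{L,k,\ell}$.

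Next I would prove the converse, that $N$ acts transitively on each such set, so the orbit sums are exactly the $\phi^d_{L,k,\ell}$. For this I would exhibit explicit elements of $N$: the transpositions $(j,j')\in S_n$ with $j,j'>2L$ realize the full symmetric group on the short roots; the block transpositions $(2i-1,2j-1)(2i,2j)$ permute the pairs arbitrarily; and the reflection $s_{e_{2i-1}}$ sends $a_i\mapsto -b_i$, $b_i\mapsto -a_i$ while fixing the other generators, hence induces the within-pair swap toggling $i$ between $A$ and $B$. Together these generate a copy of $(S_2\wr S_L)\times S_{n-2L}$ inside $N$, which acts transitively on quadruples of fixed type $(d,k,\ell)$: match $E$ with $S_{n-2L}$, match $C$ and the set of singly occupied pairs with $S_L$, and correct the $A$-versus-$B$ assignment with the within-pair swaps. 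By Corollary~\ref{orbitSum} this gives $\Inv(P_L,\CM_\ast)^{N}=\bigoplus_{d,k,\ell}\phi^d_{L,k,\ell}\,\CM_{\ast-d}(\gk)$. The inequalities are precisely the nonemptiness conditions on the index set: $|A|+|B|=d-2k-\ell\ge0$ gives $2k+\ell\le d$; disjointness of $A,B,C$ in $[1;L]$ gives $(d-2k-\ell)+k\le L$, i.e.\ $2(d-k-\ell)\le 2L$; and $E\subseteq[2L+1;n]$ gives $\ell\le n-2L$, i.e.\ $2L\le n-\ell$; while $d\le n$ holds since $d$ counts a subset of the $n$ generators.

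The main obstacle is the transitivity step: preservation of $(d,k,\ell)$ alone only shows the orbits refine the prescribed classes, and without transitivity $\Inv(P_L,\CM_\ast)^{N}$ could be strictly larger than the span of the $\phi^d_{L,k,\ell}$, so the containment could fail. The delicate points are to check that the displayed elements genuinely normalize $P_L$ rather than merely permuting a few generators, and that the within-pair swaps, which simultaneously permute roots and introduce signs, act on $\{x^L_{A,B,C,E}\}$ purely by toggling $A\leftrightarrow B$ with no sign — this uses that a reflection depends only on the root line and that $\{-1\}=0$ in $\MK_\ast$ because $-1\in\gk^{\times2}$.
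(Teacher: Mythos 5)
Your proposal is correct and takes essentially the same approach as the paper: exhibit explicit elements normalizing $P_L$ (block swaps $s_{e_{2i-1}-e_{2j-1}}s_{e_{2i}-e_{2j}}$, short-root transpositions, and within-pair swaps — your $s_{e_{2i-1}}$ versus the paper's $s_{e_{2i}}$, with identical effect), show they permute the basis $\{x^L_{A,B,C,E}\}$ with orbits exactly the classes of fixed $(d,|C|,|E|)$, and conclude by Corollary \ref{orbitSum}. The only difference is organizational: the paper computes the orbits of a convenient subgroup $N_L$ of the normalizer (which suffices, since invariance under the full normalizer implies invariance under $N_L$), whereas you additionally prove that $(d,k,\ell)$ are invariants of the full normalizer action via root lengths and the projection $\rho$ — correct, but not needed for the containment, as you yourself note that transitivity is the load-bearing step.
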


\begin{proof}
Let us first show that $\phi^d_{L, k, \ell} \ne 0$ iff $2k + \ell \le d \le n$ and $2(d - k-\ell) \le 2L \le n - \ell$. First, the conditions $2k + \ell \le d$ and $2L + \ell \le n$ are necessary. Furthermore, from the pairwise disjointness of $A, B, C$, we conclude $|A| + |B| + |C| \le L$. This is equivalent to $d - (2k + \ell) + k \le L$. Thus, $d - k-\ell \le L$ is also necessary. To check sufficiency, suppose, we are given $L, k, \ell, d$ satisfying the restrictions. Then, $([1;d - \ell-2k], \es, [d - \ell-2k + 1;d - \ell-k], [2L + 1;2L + \ell]) \in \La^d_L$. Thus, $\phi^d_{L, k, \ell} \ne 0$. Next, we check that the image of the restriction map is indeed contained in the submodule generated by the $\phi^d_{L, k, \ell} \CM_\ast(\gk)$.

\smallbreak

Observe that all of the following elements normalize $P_L$:
	$$\{s_{e_{2i - 1} - e_{2j-1}} s_{e_{2i} - e_{2j}}\}_{i, j \le L},\qquad \{s_{e_i - e_j}\}_{i, j \ge 2L + 1}\quad \text{and}\quad \{s_{e_{2i}}\}_{i \le L}.$$
Let $N_L\ss N_{W(B_n)}(P_L)$ be the subgroup generated by these elements. We claim that $N_L$ permutes the $x^L_{A, B, C, E}$. Applying $s_{e_{2i - 1} - e_{2j-1}} s_{e_{2i} - e_{2j}}$ for $i, j \le L$ to a $P_L$-torsor
$$
(\a_1, \b_1, \dots, \a_L, \b_L, \epsilon_{2L + 1}, \dots, \epsilon_n)
$$
	interchanges $\a_i \lra \a_j$ and $\b_i \lra \b_j$. Thus, $x^L_{A, B, C, E}$ maps to $x^L_{A', B', C', E}$ where $A'/B'/C'$ is obtained from $A/B/C$ by applying the transposition $(i, j)$ to the respective sets. Similarly, we see that swapping the $i$th and the $j$th coordinate for $i, j \ge 2L + 1$ maps $x^L_{A, B, C, E}$ to $x^L_{A, B, C, E'}$ where $E'$ is obtained from $E$ by applying to it the transposition $(i, j)$. Finally, changing the $(2i)$th sign maps $x^L_{A, B, C, E}$ to $x^L_{A', B', C, E}$ where $A' = (A - \{i\})\cup (B \cap \{i\})$ and $B' = (B - \{i\})\cup (A \cap\{i\})$. That is,  if $i \in A$ we remove it from $A$ and put it into $B$ and vice versa.

\smallbreak

Iteratively applying these operations to an arbitrary $(A_0, B_0, C_0, E_0) \in \La_L^d$ shows that its orbit under $N_L$ equals $ \{(A, B, C, E) \in \La_L^d\;:\,\; |C| = |C_0|, |E| = |E_0|\}$. Now, the lemma follows from Corollary \ref{orbitSum}.
\end{proof}

By Proposition \ref{splitCorollary}, the injection $\Inv (W(B_n), \CM_\ast) \to \prod_{L \le m}\Inv (P_L, \CM_\ast)$ has its image inside $\prod_{L \le m}\Inv (P_L, \CM_\ast)^{N_L}$ and Lemma \ref{prevLem} gives a good description of this object. However, this map is not surjective. One reason is the following: If an element $(z_L)_L$ of the right hand side comes from a $W(B_n)$-invariant, then certainly the restrictions of $z_L$ and $z_{L'}$ to $P_L \cap P_{L'}$ must coincide. To address this, we prove the following refined lemma.

\smallbreak

\begin{lemma}
\label{refinedBn}
	The image of $\Inv (W(B_n), \CM_\ast) \to \prod_{L \le m}\Inv (P_L, \CM_\ast)$ lies in the subgroup generated by $\{s \cdot M_{*-|s|}(k_0):\,s \in S\}$, where
$$
S:= \Big\{ \Big(\sum_{2k + \ell = r} \phi_{L, k, \ell}^d\Big)_L:\, \max(0, 2d - n) \le r \le d \le n\Big\}\ss \prod_{L \le m}\Inv (P_L, \MK_\ast).
$$
\end{lemma}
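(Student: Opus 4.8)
\emph{The plan} is to show that the restriction profile of an arbitrary invariant lies in the $\CM_\ast(\gk)$-span of the profiles of the honest invariants $u_{d-r}v_r$, which is exactly $\langle S\rangle$. First I would set up the reduction. Fix $a\in\Inv(W(B_n),\CM_\ast)$ and put $z_L:=\ms{res}^{P_L}_{W(B_n)}(a)$. By Proposition~\ref{splitCorollary} the family $(z_L)_L$ determines $a$, and by Lemma~\ref{prevLem} each $z_L$ lies in the free submodule with basis $\{\phi^d_{L,k,\ell}\}$, so I may write uniquely
$$z_L=\sum_{d,k,\ell}\phi^d_{L,k,\ell}\,c^{(L)}_{d,k,\ell},\qquad c^{(L)}_{d,k,\ell}\in\CM_\ast(\gk),$$
the sum ranging over the admissible triples of Lemma~\ref{prevLem}. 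By Lemma~\ref{bnLemma1} the profile of $u_{d-r}v_r$ has $L$-component $\sum_{2k+\ell=r}\phi^d_{L,k,\ell}$, so $S$ is precisely the set of these profiles and $\langle S\rangle$ is the set of profiles of $\CM_\ast(\gk)$-linear combinations $\sum_{d,r}(u_{d-r}v_r)m_{d,r}$. Thus the lemma is equivalent to the statement that $c^{(L)}_{d,k,\ell}$ depends only on $d$ and on $r=2k+\ell$, and neither on $L$ nor on the splitting of $r$ as $2k+\ell$.

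Next I would extract the relations coming from the intersections $P_L\cap P_{L'}$ flagged before the lemma. The key local computation is that $P_L$ and $P_{L+1}$ differ only in the block where $P_{L+1}$ replaces the two short reflections $s_{e_{2L+1}},s_{e_{2L+2}}$ by the long-root reflections $s_{a_{L+1}},s_{b_{L+1}}$; their intersection contains in that block only the common double sign change $s_{e_{2L+1}}s_{e_{2L+2}}=s_{a_{L+1}}s_{b_{L+1}}$, which on torsors is the diagonal. Restricting a generator $x^L_{A,B,C,E}$ to this intersection therefore sends an occupied $C$-slot, or a pair of short roots, to $\{\delta\}\{\delta\}=\{\delta\}\{-1\}=0$ (here $-1\in\gk^{\times2}$ is essential), while a single $A$-, $B$- or $E$-slot maps to $\{\delta\}$. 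Expanding the identity $\ms{res}^{P_L\cap P_{L+1}}(z_L)=\ms{res}^{P_L\cap P_{L+1}}(z_{L+1})$ in the basis of $\Inv(P_L\cap P_{L+1},\CM_\ast)$ and comparing coefficients then yields $c^{(L)}_{d,k,\ell}=c^{(L+1)}_{d,k,\ell}$ throughout the common range of validity; chaining over $L$ produces a single coefficient $c_{d,k,\ell}$ independent of $L$.

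The hard part, and the step I expect to be the main obstacle, is the remaining collapse $c_{d,k,\ell}=c_{d,k-1,\ell+2}$ that trades one long-root pair for two short roots at fixed $d$ and $r$. This cannot be read off from any single intersection: since conjugation in $W(B_n)$ preserves root lengths, any common subgroup that would pair a $C$-slot of one $P_L$ against a short-root pair of another identifies both through the \emph{same} double sign change, so both degree-two contributions vanish as $\{\delta\}\{\delta\}=0$ and the comparison is empty. Using $L$-independence one can still peel off $\CM_\ast(\gk)$-multiples of the $u_{d-r}v_r$ to annihilate $z_m$ (where $P_m$ sees only the all-pairs, even-$r$ coefficients) and $z_0$ (where $P_0=(\Z/2)^n$ sees only the pure short-root coefficients $c_{d,0,d}$), but this kills only the extreme coefficients and leaves the genuinely mixed ones $c_{d,k,\ell}$ with $k\ge 1,\ \ell\ge 1$ untouched, so pairwise compatibility alone does not finish. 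To close the gap I would instead compute $\CMu{\ast}(U/W(B_n))$ directly on a versal torsor, exactly as in the dihedral computation of Section~\ref{dihedralSubSect} via Lemma~\ref{cheeseLine} and homotopy invariance, and match the resulting unramified classes against the profiles in $S$; it is this explicit computation, rather than the intersection compatibilities, that forces the coefficients to collapse to a single $c_{d,r}$ and thereby pins the image of $\Inv(W(B_n),\CM_\ast)$ down to $\langle S\rangle$.
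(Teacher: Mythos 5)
Your reduction and your first collapse are sound. Writing $z_L=\sum\phi^d_{L,k,\ell}\,c^{(L)}_{d,k,\ell}$ via Lemma \ref{prevLem} and reformulating the lemma as the statement that $c^{(L)}_{d,k,\ell}$ depends only on $d$ and $r=2k+\ell$ is exactly right. Your intersection argument for $L$-independence also works, with one point you gloss over: on $Q=P_L\cap P_{L+1}$ the basis elements of $\Inv(Q,\CM_\ast)$ involving the diagonal class $\{\delta\}$ receive \emph{two} equal contributions from $z_L$ (resp.\ $z_{L+1}$), which cancel mod $2$, so the only informative comparisons are on the $\delta$-free components; these do give $c^{(L)}_{d,k,\ell}=c^{(L+1)}_{d,k,\ell}$ exactly on the range where both $\phi$'s are nonzero, and chaining over $L$ is legitimate because that range is an interval. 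This is a mild variant of the paper's argument, which instead restricts $z_L$ and $z_{L'}$ to a common subgroup $P(A_0,B_0,C_0,E_0)$ with $E_0=[n-\ell+1;n]$ placed in the last coordinates.

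The genuine gap is the second collapse $c_{d,k,\ell}=c_{d,k+1,\ell-2}$, which you correctly identify as the crux and correctly show cannot come from intersections of the $P_L$ (conjugation preserves root lengths, so any identification of a $C$-slot with a short-root pair factors through the double sign change, where both degree-two classes die). But your proposed repair --- computing $\CMu{\ast}$ of a versal $W(B_n)$-torsor as in Section \ref{dihedralSubSect} --- is not carried out and is not a realistic patch: computing the unramified cohomology of a versal torsor and matching it against explicit invariants is essentially equivalent to computing $\Inv(W(B_n),\CM_\ast)$ outright, which is what the splitting-principle strategy is designed to avoid; the paper performs such a computation only in rank $2$. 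What the paper actually does is leverage that rank-$2$ computation through the product formula: choosing $L$ with $\phi^d_{L,k,\ell}\ne0$ and $\phi^d_{L+1,k+1,\ell-2}\ne0$, it restricts $\wt z$ to the subgroup $P([1;d-\ell-2k],\es,[L-k+1;L],[2L+3;2L+\ell])\times W(B_2)$, where the $W(B_2)$-factor occupies coordinates $2L+1,2L+2$. By Proposition \ref{productLem} and the complete decomposability of $\Inv(W(B_2),\CM_\ast)$ with basis $\{1,w_1,v_1,w_2\}$, the relevant component decomposes with coefficients $m^{(0)},m^{(1a)},m^{(1b)},m^{(2)}$, and Corollary \ref{i2Corollary} then does the work your intersections cannot: $w_2$ restricts on the short-root subgroup $P_1$ to $x_{\{e_1,e_2\}}$ and on the long-root subgroup $P_2$ to $x_{\{e_1+e_2,e_1-e_2\}}+\{2\}\cdot(x_{\{e_1-e_2\}}+x_{\{e_1+e_2\}})$, so the single coefficient $m^{(2)}$ is read off once as $c_{d,k,\ell}$ (restricting the $B_2$-factor to $P_1$) and once as $c_{d,k+1,\ell-2}$ (restricting it to $P_2$), forcing their equality. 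The idea you were missing is that the bridge between the two root lengths is not an abelian subgroup of $W(B_n)$ but a non-abelian copy of $W(B_2)$, whose already-computed invariant $w_2$ sees the top-degree class on \emph{both} of its maximal elementary abelian subgroups.
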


\begin{proof}
	Let $\wt z \in \Inv (W(B_n), \CM_\ast)$ be a homogeneous invariant and $z = (z_L)_L \in \prod_{L \le m}\Inv (P_L, \CM_\ast)$ 	be the image of $\wt z$ under the restriction maps. By Lemma \ref{prevLem}, $z = \big(\sum_{d, k, \ell}\phi_{L, k, \ell}^dm_{L, d, k, \ell}\big)_L$ for some $m_{L, d, k, \ell} \in \CM_{\ast -d}(\gk)$, where the sums are over all those $d, k, \ell$ such that $\phi_{L, k, \ell}^d \ne 0$. 

\smallbreak

First goal, we show that $m_{L, d, k, \ell}$ is independent of $L$ in the sense that $m_{L, d, k, \ell} = m_{L', d, k, \ell}$, if $\phi^d_{L, k, \ell} \ne 0$ and $\phi^d_{L', k, \ell} \ne 0$. We then denote by $m_{d, k, \ell}$ the common value. Observe that $(A_0, B_0, C_0, E_0) \in \La^d_{L'} \cap\La^d_L$, where 
	$$
(A_0, B_0, C_0, E_0):=([1;d - \ell-2k], \es, [d - \ell-2k + 1;d - \ell-k], [n - \ell + 1;n]).
$$
 Hence, since $z$ comes from an invariant of $W(B_n)$, 
$$
\ms{res}^{P(A_0, B_0, C_0, E_0)}_{P_L}(z_L) = \ms{res}^{P(A_0, B_0, C_0, E_0)}_{P_{L'}}(z_{L'}).
$$
Comparing coefficients of $x_{A_0, B_0, C_0, E_0}$-components on both sides yields that $m_{L, d, k, \ell} = m_{L', d, k, \ell}$.

\smallbreak

Now, let us have a look at the second obstruction. We want to prove $m_{d, k, \ell} = m_{d, k', \ell'}$, if $2k + \ell = 2k' + \ell'$ and if there exist $L, L'$ such that $\phi^d_{L', k', \ell'} \ne 0$ and $\phi^d_{L, k, \ell} \ne 0$. It suffices to prove this in the case $k' - k = 1$. Since there exist $L, L'$ satisfying $\phi^d_{L', k', \ell'}, \phi^d_{L, k, \ell} \ne 0$, we can choose some $L$ such that $\phi^d_{L + 1, k', \ell'}, \phi^d_{L, k, \ell} \ne 0$. 

\smallbreak

	Let $y$ be the restriction of $\wt z$ to $P([1;d - \ell-2k], \es, [L - k + 1;L], [2L + 3;2L + \ell])\times W(B_2)$, where $B_2$ is embedded via the $(2L + 1)$th and the $(2L + 2)$th coordinates. By Proposition \ref{productLem}, 
$$
y = \sum_{\substack{A\ss[1;d - \ell-2k]\\ C\ss [L - k + 1;L]\\ E\ss[2L + 3;2L + \ell]}}x^L_{A, \es, C, E} y_{A, C, E}
$$
for uniquely determined $y_{A, C, E} \in \Inv^{\ast -|A|-2|C|-|E|}(W(B_2), \CM_\ast)$. Furthermore, by the results of Section \ref{b2Sec}, 
$$
y_{A, C, E} = m^{(0)}_{A, C, E} + w_1m^{(1a)}_{A, C, E} + v_1m^{(1b)}_{A, C, E} + w_2m^{(2)}_{A, C, E}
$$
for uniquely determined
$$
m^{(0)}_{A, C, E} \in \CM_{\ast -|A|-2|C|-|E|}(k_0), \; m^{(1a)}_{A, C, E}, m^{(1b)}_{A, C, E} \in \CM_{\ast -|A|-2|C|-|E|-1}(k_0)
$$
and
$$
m^{(2)}_{A, C, E} \in \CM_{\ast -|A|-2|C|-|E|-2}(k_0).
$$
Restricting $y$ further to $P([1;d - \ell-2k], \es, [L - k + 1;L], [2L + 1;2L + \ell])$ and considering the $x_{[1;d - 2k-\ell], \es, [L - k + 1;L], [2L + 1;2L + \ell]}$-component, Corollary \ref{i2Corollary} yields that
$$
m_{d, k, \ell} = m^{(2)}_{([1;d - \ell-2k], [L - k + 1;L], [2L + 3;2L + \ell])}.
$$
On the other hand, restricting $y$ to $P([1;d - \ell-2k], \es, [L - k + 1;L + 1], [2L + 3;2L + \ell])$ and considering the $x_{[1;d - 2k-\ell], \es, [L - k + 1;L + 1], [2L + 3;2L + \ell]}$-component, we obtain from Corollary~\ref{i2Corollary} that
$$
m_{d, k', \ell'} = m^{(2)}_{([1;d - \ell-2k], [L - k + 1;L], [2L + 3;2L + \ell])}.
$$
This proves the lemma.
\end{proof}

From Lemma \ref{bnLemma1}, we deduce the following decomposition of $\Inv (W(B_n), \CM_\ast)$.

\smallbreak

\begin{corollary}
The group $\Inv (W(B_n), \CM_\ast)$ is completely decomposable with basis
$$
 \big\{u_{d - r}v_r:\;\max(0, 2d - n) \le r \le d \le n\big\}.
$$
\end{corollary}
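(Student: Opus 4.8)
The plan is to read the corollary off from the splitting principle together with Lemmas \ref{bnLemma1} and \ref{refinedBn}; no new computation is needed, only a bookkeeping of what has already been established. As in the convention at the beginning of Part II, it suffices to argue over $k=\gk$, since the same reasoning applies over every $k\in\Fk$ after restriction.

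First I would use Proposition \ref{splitCorollary} together with the identification $\Omega(W(B_n))=\{[P_0],\dots,[P_m]\}$ to conclude that the total restriction map
$$
\Phi:\ \Inv(W(B_n),\CM_\ast)\ \longrightarrow\ \prod_{L\le m}\Inv(P_L,\CM_\ast)
$$
is injective. Thus it is enough to pin down the image of $\Phi$ and to show that $\Phi$ sends the proposed products bijectively onto a free basis of that image. Evaluating $\Phi$ on the candidates, Lemma \ref{bnLemma1} gives, for each $(d,r)$ with $\max(0,2d-n)\le r\le d\le n$ and componentwise in $L$,
$$
\ms{res}^{P_L}_{W(B_n)}(u_{d-r}\,v_r)=\sum_{2|C|+|E|=r}x^L_{A,B,C,E}=\sum_{2k+\ell=r}\phi^d_{L,k,\ell},
$$
so that $\Phi(u_{d-r}v_r)$ is exactly the generator $s=\big(\sum_{2k+\ell=r}\phi^d_{L,k,\ell}\big)_L\in S$ indexed by $(d,r)$. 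Hence $\Phi$ carries $\{u_{d-r}v_r\}$ bijectively onto $S$.

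On the other hand, Lemma \ref{refinedBn} shows that the image of $\Phi$ is contained in the subgroup generated by $\{s\cdot\CM_{*-d}(\gk):s\in S\}$. Since each such $s$ already lies in the image by the previous identity, the two inclusions combine to give that $\Phi(\Inv(W(B_n),\CM_\ast))$ equals precisely this subgroup. It then remains to check that $S$ is $\CM_\ast(\gk)$-linearly independent, so that this subgroup is the free $\CM_\ast(\gk)$-module on $S$: by Lemma \ref{prevLem} the elements $\{\phi^d_{L,k,\ell}\}$ are part of a basis of $\prod_{L\le m}\Inv(P_L,\MK_\ast)$, and distinct pairs $(d,r)$ select, through the constraint $r=2k+\ell$, disjoint subsets of these basis vectors; disjoint support forces independence, while the range $\max(0,2d-n)\le r\le d\le n$ guarantees (as in the proof of Lemma \ref{prevLem}) that each $s$ is nonzero.

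Assembling the pieces, the map $(c_{d,r})\mapsto\sum_{d,r}c_{d,r}\,u_{d-r}v_r$ becomes, after composing with the injective $\Phi$, an isomorphism onto the free module $\langle S\rangle$, and is therefore itself an isomorphism; this is exactly the complete decomposability required by Definition \ref{decomposableDef}. I expect the whole argument to be a clean diagram-chase, the only point needing care being the disjoint-support bookkeeping behind the independence of $S$. The genuine difficulty has already been absorbed into Lemma \ref{refinedBn}, which encodes the compatibility of the local data across the different $P_L$; once that lemma is in hand, the corollary is essentially formal.
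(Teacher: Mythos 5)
Your proposal is correct and takes essentially the same route as the paper, which obtains the corollary by combining the splitting principle (Proposition \ref{splitCorollary}), the image bound of Lemma \ref{refinedBn}, and Lemma \ref{bnLemma1}, which identifies $\ms{res}^{P_L}_{W(B_n)}(u_{d-r}v_r)$ with $\sum_{2k+\ell=r}\phi^d_{L,k,\ell}$, i.e.\ with the generators in $S$. The only cosmetic slips --- the elements $s\in S$ are $\MK_\ast$-valued, so strictly it is the products $s\cdot m$ with $m\in\CM_\ast(\gk)$ that lie in the image of $\Phi$, and the $\phi^d_{L,k,\ell}$ form part of a basis of each $\Inv(P_L,\CM_\ast)$ separately rather than of the product --- do not affect the argument.
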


%%%%%%%%%%%%%%%%%%%%%%%%
%F4
%%%%%%%%%%%%%%%%%%%%%%%

\goodbreak
\section{Weyl groups of type $F_4$.}
\label{F4SubSect}

\noindent
The root system $F_4$ is the disjoint union $\Delta_1\udot \Delta_2\udot \Delta_3\ss \R^4$ with short routes $\Delta_1:=\{\pm e_i\pm e_j:\, 1\le i < j \le 4\}$ and long roots
$$
\Delta_2:=\{\pm e_i:\, 1 \le i \le 4\}, \qquad \Delta_3:=\{1/2(\pm e_1\pm e_2\pm e_3\pm e_4)\}.
$$
Moreover, $\Omega(W(F_4)) = \{[P_0], [P_1], [P_2]\}$, where
$$P_0:= P(e_1, e_2, e_3, e_4), \qquad P_1:= P(a_1, b_1, e_3, e_4), \qquad P_2:= P(a_1, b_1, a_2, b_2)$$
\medbreak

Indeed, the set of long roots of $F_4$ is the root system $D_4$, which up to conjugacy has a unique maximal set of pairwise orthogonal vectors, namely $a_1, b_1, a_2, b_2$. On the other hand, if we have a maximal set of pairwise orthogonal roots containing a short root, say $e_4$, then $\lan e_4\ran^\perp \cap F_4 = B_3$. We have determined before that up to conjugacy $B_3$ contains two maximal sets of pairwise orthogonal roots; namely $\{e_1, e_2, e_3\}$ and $\{a_1, b_1, e_3\}$.

\smallbreak

Furthermore, the inclusion $P_2\ss W(B_4)\ss W(F_4)$ shows that the restriction map
$$
\Inv (W(F_4), \CM_\ast) \to \Inv (W(B_4), \CM_\ast)
$$
is injective. Recall that $\Inv (W(B_4), \CM_\ast)$ is a free $\CM_\ast(\gk)$-module with the basis 
$$\{1, u_1, v_1, u_2, v_1u_1, v_2, v_2u_1, v_3, v_4\}.$$
\medbreak

Before constructing specific invariants, we first point to another restriction in degree $2$. Since $\ms{res}^{P_2}_{W(F_4)}(v_1) = \ms{res}^{P_2}_{W(F_4)}(v_3) = 0$, the image of the restriction $\ms{res}^{P_2}_{W(F_4)}$ is contained in the free submodule $S\ss\Inv^*(P_2, \CM_\ast)$ with basis $\{1, y_1, y_2, y_2', y_3, y_4\}$, where $y_1 = \res_{W(B_4)}^{P_2}(u_1)$, $y_2 = \res_{W(B_4)}^{P_2}(u_2)$, $y_2' = \res_{W(B_4)}^{P_2}(v_2)$, $y_3 = \res_{W(B_4)}^{P_2}(v_2u_1)$ and $y_4 = \res_{W(B_4)}^{P_2}(v_4)$. 

\smallbreak

Now, let $a \in \Inv (P_2, \CM_\ast)$ be any invariant which is induced by an invariant from $\Inv (W(F_4), \CM_\ast)$. Then, we can find unique $m_d \in \CM_{\ast -d}(\gk)$, $m_2, m_2' \in \CM_{\ast -2}(\gk)$ such that 
$$
a = \sum_{\substack{ d \le 4\\d \ne 2}} \Big(\hspace{-.2cm}\sum_{(A, B, C) \in \La^d}\hspace{-.2cm}x_{A, B, C}\Big)m_d + 
 \Big(\hspace{-.2cm}\sum_{(A, B, \es) \in \La^2}\hspace{-.2cm}x_{A, B, \es}\Big)m_2 + \Big(\hspace{-.2cm}\sum_{(\es, \es, C) \in \La^2}\hspace{-.2cm}x_{\es, \es, C}\Big)m_2'.
$$
Now, $s_{1/2(e_1 + e_2 + e_3 + e_4)}$ lies in the normalizer of $P_2$, as it leaves ${a_1}$, ${a_2}$ fixed and swaps ${b_1}$ with $-{b_2}$. Since $a$ comes from $\Inv (W(F_4), \CM_\ast)$, the action of $s_{1/2(e_1 + e_2 + e_3 + e_4)}$ leaves $a$ invariant. Hence,
\begin{align*}
	a = &\sum_{\substack{d \le 4\\d \ne 2}} \Big(\hspace{-.2cm}\sum_{(A, B, C) \in \La^d}\hspace{-.2cm}x_{A, B, C}\Big)m_d
	+ (x_{\{a_1, a_2\}} + x_{\{b_1, b_2\}} + x_{\{a_1, b_1\}} + x_{\{a_2, b_2\}})m_2 \\
	&+ (x_{\{a_1, b_2\}} + x_{\{a_2, b_1\}})m_2'.
\end{align*}
Comparing coefficients yields $m_2 = m_2'$.

\bigbreak

Thus, the image of the restriction $\Inv (W(F_4), \CM_\ast) \to \Inv (P_2, \CM_\ast)$ is contained in the free submodule with basis $\{1, y_1, y_2 + y_2', y_3, y_4\}$. Therefore, the image of the restriction $\Inv (W(F_4), \CM_\ast) \to \Inv (W(B_4), \CM_\ast)$ is contained in the free $\CM_\ast(\gk)$-module with basis $\{1, u_1, v_1, u_2 + v_2, v_1u_1, v_2u_1, v_3, v_4\}$.

\smallbreak

Now, we need to construct $F_4$-invariants which restrict to these elements. First observe that $D_4\ss F_4$ and that $W(F_4)$ stabilizes $D_4$. Thus, any $g \in W(F_4)$ maps the simple system $S = \{e_1 - e_2, e_2 - e_3, e_3 - e_4, e_3 + e_4\}$ to another simple system $S'\ss D_4$. Since all simple systems are conjugate there exists a \emph{unique} $h \in W(D_4)$ mapping $S'$ to $S$. This procedure induces a permutation of the $3$ outer vertices $\{e_1 - e_2, e_3 - e_4, e_3 + e_4\}$ of the Coxeter graph, thereby giving rise to a group homomorphism $\psi:\, W(F_4) \to S_3$.

\smallbreak

Then, we define $v_1:= \psi^*(\wt{w_1})$, where $\wt{w_1} \in \Inv (S_3, \MK_\ast)$ is the first modified Stiefel-Whitney class. To determine the restriction of $v_1$ to $P_L$ note that the map $W(F_4) \to S_3$ sends $W(D_4)$ to the identity and $s_{e_4} $ to the transposition $(2, 3)$. Since $s_{e_i} = g_i s_{e_4} g_i^{-1}$, where $g_i \in W(D_4)$ denotes the element switching the 4th and the $i$th coordinate ($ i \le 3$), we conclude that all $s_{e_i}$ are sent to $(2, 3)$. Thus, the value of $\ms{res}^{P_L}_{W(F_4)}(v_1)$ at the $P_L$-torsor $(\a_1, \b_1, \dots, \a_L, \b_L, \epsilon_{2L + 1}, \dots, \epsilon_4)$ is $\sum_{i \ge 2L + 1} \{\epsilon_i\}$.

\smallbreak

The embedding $W(F_4)\ss O_4$ as orthogonal reflection group yields invariants $\ms{res}^{W(F_4)}_{O_4}(w_d) \in \Inv^d(W(F_4), \MK_\ast)$, where $w_d \in \Inv^d(O_4, \MK_\ast)$ is the $d$th unmodified Stiefel-Whitney class. Again, if $2$ is not a square in $\gk$, then these invariants do not have a nice form, when restricted to the $P_L$. Therefore, we change them a little and define invariants $\widehat{w_d}$. The image of a $P_L$-torsor $(\a_1, \dots, \a_L, \b_1, \dots, \b_L, \epsilon_{2L + 1}, \dots, \epsilon_4)$ in $H^1(k, O_4)$ under the map $P_L\ss W(F_4)\ss O_4$ may be computed by using Example \ref{abQuadratic} and is given by $\lan2\a_1, 2\b_1, \dots, 2\a_L, 2\b_L, \epsilon_{2L + 1}, \dots, \epsilon_4\ran$.
We would like to have
$$
\ms{res}^{P_L}_{W(F_4)}(\wh{w_d}) = \sum_{(A, B, C, E) \in \La^d_L}x^L_{A, B, C, E}.
$$
 Since the restriction of $w_1$ to $P_L$ is already given by $\sum_{(A, B, C, E) \in \La^1_L}x^L_{A, B, C, E}$, we put $\wh w_1:= w_1$. Now, for $d = 2$, 
$$\ms{res}^{P_L}_{O_4}(w_2) = \sum_{(A, B, C, E) \in \La^2_L}x^L_{A, B, C, E} + \sum_{(A, B, \es, \es) \in \La^1_L}\{2\} x^L_{A, B, \es, \es},$$
so that $\wh w_2:= w_2 - \{2\} (w_1 - v_1)$ has the desired property. The restriction of $w_3$ to $P_L$ is 
$$
\ms{res}^{P_L}_{O_4}(w_3) = \sum_{(A, B, C, E) \in \La^3_L}x^L_{A, B, C, E} + \sum_{\substack{(A, B, \es, E) \in \La^2_L\\|E| = 1}}\{2\} x^L_{A, B, \es, E},
$$
so that we set $\wh{w_3}:= w_3-\{2\} (w_1 - v_1) v_1$. Finally, the restriction of $w_4$ to $P_L$ is
$$
\ms{res}^{P_L}_{O_4}(w_4) = \sum_{(A, B, C, E) \in \La^4_L}x^L_{A, B, C, E} + \sum_{\substack{(A, B, C, E) \in \La^3_L\\2|C| + |E| = 2}}\{2\} x^L_{A, B, C, E}
$$
so that we set $\wh{w_4}:= w_4-\{2\}w_2(w_1 - v_1)$. Furthermore, define $u_1:= w_1 - v_1 \in \Inv^1(W(F_4), \MK_\ast)$.

\smallbreak

Now, we restrict the so-constructed invariants to $W(B_4)$. We claim that

\smallbreak

\begin{itemize}
\item[(a)]
$u_1, v_1 \in \Inv^1(W(F_4), \MK_\ast)$ restrict to $u_1, v_1 \in \Inv^1(W(B_4), \MK_\ast)$;

\smallbreak

\item[(b)]
$u_1v_1, (\wh{w_2}-u_1v_1) \in \Inv^2(W(F_4), \MK_\ast)$ restrict to $u_1v_1, u_2 + v_2 \in \Inv^2(W(B_4), \MK_\ast)$; and

\smallbreak

\item[(c)]
$u_1\wh{w_2}, (\wh{w_3}-u_1\wh{w_2}) \in \Inv^3(W(F_4), \MK_\ast)$ restrict to $u_1v_2, v_3$.
\end{itemize}

\smallbreak

Finally, $\wh w_4 \in \Inv^4(W(F_4), \MK_\ast)$ restricts to $v_4 \in \Inv^4(W(B_4), \MK_\ast)$. To prove these claims, we only need to consider the restrictions to $\Inv (P_L, \MK_\ast)$, where the identities are clear by construction. Thus, $\Inv (W(F_4), \CM_\ast)$ is a free $\CM_\ast(\gk)$-module with basis
$$
\{1, \wh{w_1}, v_1, \wh{w_2}, \wh{w_1}v_1, \wh{w_3}, \wh{w_2}v_1, \wh{w_4}\}.
$$
The construction of the $\wh{w_d}$ also yields the following result.
\begin{proposition}
$\Inv (W(F_4), \CM_\ast)$ is completely decomposable with basis
$$
\{1, w_1, v_1, w_2, v_1w_1, w_3, v_1w_2, w_4\}.
$$
\end{proposition}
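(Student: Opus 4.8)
The plan is not to rerun the full splitting-principle computation with the unmodified Stiefel--Whitney classes, but to deduce the stated basis from the one just obtained, $\{1, \wh{w_1}, v_1, \wh{w_2}, \wh{w_1}v_1, \wh{w_3}, \wh{w_2}v_1, \wh{w_4}\}$, by recognizing the passage to $\{1, w_1, v_1, w_2, v_1 w_1, w_3, v_1 w_2, w_4\}$ as an invertible change of coordinates over $\CM_\ast(\gk)$. Each of the eight proposed elements is a product of the classes $w_d = \ms{res}^{W(F_4)}_{O_4}(w_d)$ and $v_1 = \psi^\ast(\wt{w_1})$, so they genuinely lie in $\Inv(W(F_4), \MK_\ast) \subseteq \Inv(W(F_4), \CM_\ast)$; it therefore suffices to show that the transition matrix between the two eight-tuples is invertible and then to transport the free-basis property across it.

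First I would expand each hatted generator in terms of the unhatted ones directly from the definitions $\wh{w_2} = w_2 - \{2\}(w_1 - v_1)$, $\wh{w_3} = w_3 - \{2\}(w_1 - v_1)v_1$ and $\wh{w_4} = w_4 - \{2\}\wh{w_2}(w_1 - v_1)$. The standing hypothesis $-1 \in \gk^{\times 2}$ is exactly what makes this clean: it forces $\{-1\} = 0$, whence $\{2\}\{2\} = \{2\}\{-1\} = 0$ and $v_1^2 = w_1^2 = 0$, while products commute because we work mod $2$. Combined with the product formula \eqref{SWProduct}, which gives $w_1 w_2 = w_3$ since $b(1) \cap b(2) = \es$, the corrections collapse to
\begin{align*}
\wh{w_2} &= w_2 - \{2\}w_1 + \{2\}v_1, \\
\wh{w_1}v_1 &= v_1 w_1, \\
\wh{w_3} &= w_3 - \{2\}\,v_1 w_1, \\
\wh{w_2}v_1 &= v_1 w_2 - \{2\}\,v_1 w_1, \\
\wh{w_4} &= w_4 - \{2\}w_3 + \{2\}\,v_1 w_2.
\end{align*}

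The decisive point is then purely formal. Ordering the eight vectors by degree, and within degrees two and three in the order displayed, each hatted generator equals its unhatted counterpart plus a $\MK_\ast(\gk)$-linear combination of strictly earlier basis vectors, every correction coefficient being $\pm\{2\}$. Hence the transition matrix is unitriangular, of the shape $I + N$ with $N$ strictly lower triangular and so nilpotent, and $I + N$ is invertible with inverse the finite sum $\sum_{j \ge 0}(-N)^j$. Because the entries lie in $\MK_\ast(\gk)$, restriction along any $k \in \Fields_{\gk}$ keeps the diagonal equal to $1$ and the off-diagonal part nilpotent, so invertibility persists over $\CM_\ast(k)$ for every $k$. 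Thus the stated eight-tuple and the known basis are carried into one another by an isomorphism of free $\CM_\ast(k)$-modules, which is precisely complete decomposability with the asserted basis.

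The step that demands the most care is verifying this triangular shape, and it is where the vanishing identities earn their keep: without $\{2\}^2 = 0$, $v_1^2 = 0$ and $w_1 w_2 = w_3$, the expansion of $\wh{w_4} = w_4 - \{2\}\wh{w_2}(w_1 - v_1)$ would produce terms quadratic in $\{2\}$ or products such as $w_2 w_1$ that are a priori not supported on strictly earlier vectors, and triangularity could break. Once those cancellations are confirmed the remainder is bookkeeping, and no geometric input beyond the restrictions to the $P_L$ already computed is needed.
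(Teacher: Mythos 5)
Your proof is correct and is essentially the paper's own argument: the paper deduces this proposition from the already-established basis $\{1, \wh{w_1}, v_1, \wh{w_2}, \wh{w_1}v_1, \wh{w_3}, \wh{w_2}v_1, \wh{w_4}\}$ with the single remark that ``the construction of the $\wh{w_d}$ also yields the following result,'' which is exactly the unitriangular change of basis you make explicit. Your expansions of the hatted classes, together with the identities $\{2\}\{2\} = 0$, $w_1^2 = v_1^2 = 0$ and $w_1 w_2 = w_3$ that justify triangularity, are all accurate, so your write-up is a valid filling-in of the details the paper leaves implicit.
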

\begin{remark}
	Alternatively, to the approach above, one could also rely on transfer-restriction arguments to  characterize the invariants of $W(B_4)$, which extend to $W(F_4)$ as those whose restriction to $W(D_4)$ is fixed under the action of $W(F_4)/W(D_4)$.
\end{remark}

%%%%%%%%%%%%%%%%%%%%%%%%
%Dn
%%%%%%%%%%%%%%%%%%%%%%%

\section{Weyl groups of type $D_n$.}
\label{DnSubSect}

\noindent
The root system $D_n$, $n\ge2$ consists of the elements
$$
D_n = \{\pm e_i\pm e_j:\, 1 \le i < j \le n\}.
$$
Let $m:= [n/2]$, $a_i:= e_{2i - 1} - e_{2i}$ and $b_i:= e_{2i - 1} + e_{2i}$. By Remark \ref{splittingPrincipleRem}, this root system defines an orthogonal reflection group over $\gk$ with $|\Omega(W(D_n))| = 1$. More precisely, $P:= P(a_1, b_1, \dots, a_m, b_m)$ is a maximal elementary abelian $2$-group generated by reflections. Furthermore, $W(D_n)$ is a subgroup of $S_n\ltimes (\Z/2)^n\cong W(B_n)$ in the precise sense that
$$
W(D_n) = \{\sigma \cdot \prod_{i \in I}s_{e_i} \in S_n\ltimes (\Z/2)^n:\, |I| \text{ even} \}.
$$
\begin{remark}
	We note that for odd $n$ the invariants of $W(D_n)$ can be deduced from those of $W(B_n)$, since $W(B_n) = \{\pm 1\} \times W(D_n)$. For instance, since $W(D_3) \cong W(A_3)$, this gives the invariants for $W(B_3)$.
\end{remark}

Similarly to the $B_n$-section, we define 
$$\La^d:= \{(A, B, C)\ss [1, m]^3:\, A, B, C \text{ are pw.~disjoint}, \; |A| + |B| + 2|C| = d\}$$
and $x_{A, B, C}:\, H^1(k, P) \to \MK_d(k)/2$
\begin{align*}
 x_{A, B, C}(\a_1, \b_1, \dots, \a_m, \b_m) = \prod_{a \in A}\{\a_a\} \cdot\prod_{b \in B}\{\b_b\} \cdot \prod_{c \in C}\{\a_c\}\{\b_c\}.
\end{align*}
As in the $B_n$-section, we now construct specific invariants. First, for $d \le m$ the group homomorphism $\rho:\, W(D_n)\ss W(B_n) \to S_n$ induces the invariant $u_d:= \rho^*(\wt{w_d}) \in \Inv^d(W(D_n), \MK_\ast)$ with $\ms{res}^P_{W(B_n)}(u_d) = \sum_{\substack{(A, B, \es) \in \La^d}} x_{A, B, \es}.$

\smallbreak

Furthermore, from Section \ref{BnSubSect} we have an embedding $W(D_n)\ss W(B_n)\ss S_{2n}$. Starting with a $W(D_n)$-torsor $x \in H^1(k, W(D_n))$, we may consider its image $q_x \in H^1(k, O_{2n})$ induced by the map $W(D_n) \to S_{2n} \to O_{2n}$. Observe that $W(D_n) \to S_{2n}$ sends
\begin{align*} 
	s_{a_i}\mapsto(2i - 1, 2i)(2i - 1 + n, 2i + n),\;
	s_{b_i}\mapsto (2i - 1, 2i + n)(2i, 2i - 1 + n).
\end{align*} 
Thus, starting with a $P$-torsor $(\a_1, \b_1, \dots, \a_m, \b_m)$, we may apply Lemma \ref{pfisterLemma} to see that under the composition $P \to W(D_n) \to S_{2n} \to O_{2n}$ this torsor maps to
$$
\lan\lan - \a_1, - \b_1 \ran\ran\oplus\dots\oplus\lan\lan - \a_m, - \b_m \ran\ran\;\; (\oplus \lan 1, 1\ran),
$$
where the expression in parentheses appears only for odd $n$. We would like to have an element $v \in \Inv (W(D_n), \MK_\ast)$ such that $\ms{res}^P_{W(D_n)}(v)$ is given by
\begin{align*}
	H^1(k, P)& \to \MK_\ast(k)\\
(\a_1, \b_1 \dots, \a_m, \b_m)&\mapsto (1 + \{\a_1\}\{\b_1\}) \cdots(1 + \{\a_m\}\{\b_m\}).
\end{align*}
To achieve this goal, we proceed recursively as in Section \ref{BnSubSect}. First, we compute the value of the total Stiefel-Whitney class $w \in \Inv (O_4, \MK_\ast)$ at a $2$-fold Pfister form:
\begin{align*}
w(\lan \lan- \a, - \b\ran\ran)& = (1 + \{\a\})(1 + \{\b\})(1 + \{\a\} + \{\b\}) \\
	&= 1 + \{-1\}\{\a\} + \{-1\}\{\b\} + \{\a\}\{\b\}.
\end{align*}
Hence, setting $v' := \ms{res}^{W(D_n)}_{O_{2n}}(w)$, we obtain as in Lemma \ref{bnVdLem} that 
$$\ms{res}^P_{W(D_n)}(v_d') = \hspace{-.2cm} \sum_{(\es, \es, C) \in \La^d} \hspace{-.2cm}x^L_{\es, \es, C} + \sum_{k \le d - 1}\{-1\}^{d - k}\hspace{-.2cm}\sum_{(A, B, C) \in \La^k}\hspace{-.2cm} x_{A, B, C}.$$
Hence, proceeding recursively by setting $v_0 := 0$ and then
$$v_d := v_d' + \sum_{k \le d - 1}u_{d - k} v_k$$
yields the desired invariant. Moreover, 
$
\ms{res}^P_{W(D_n)}(v_d) = \sum_{\substack{(\es, \es, C) \in \La^d}} x_{\es, \es, C}
$
and, by Lemma \ref{bnLemma1},
\begin{align}
	\label{dnProdEq}
\ms{res}^P_{W(D_n)}(u_d) \ms{res}^P_{W(D_n)}(v_e) = \sum_{\substack{(A, B, C) \in \La^{d + e}\\2|C| = e }}x_{A, B, C}.
\end{align}
Now, suppose that $n = 2m$ is even. In this case, we need to construct one further invariant. Since $W(D_n)\cong S_n\ltimes (\Z/2)^{n - 1}$, we have an embedding $S_n\ss W(D_n)$ such that $|W(D_n)/S_n| = 2^{n - 1}$. More precisely, $|W(D_n)/ S_n|$ consists of the cosets $g_IS_n$, where $g_I:=\prod_{i \in I}s_{e_i}$ and where $I\ss [1;n]$ has even cardinality. The left action of $W(D_n)$ on these cosets induces a map $W(D_n) \to S_{2^{n - 1}} \to O_{2^{n - 1}}$. Thus, any $k \in \mc F_{k_0}$ and $y \in H^1(k, W(D_n))$ induce a quadratic form $q_y \in H^1(k, O_{2^{n - 1}})$ and thereby an invariant $\omega \in \Inv (W(D_n), W)$. In fact, we claim that $\omega \in \Inv (W(D_n), I^m)$, where $I(k)\ss W(k)$ is the fundamental ideal. 

\smallbreak

To prove this, we start by showing that $\ms{res}^P_{W(D_n)}(\omega) \in \Inv(P, I^m)$. It is convenient to understand the map $W(D_n) \to S_{2^{n - 1}}$ on the subgroup $P$.
\begin{lemma}
	\label{pactLem}
Let $L = \{\{2i - 1, 2i\}:\, i \le m\}$ and define $f:\, 2^{[1;n]} \to 2^L$,
\begin{align*}
	f(I):= \{\{2i - 1, 2i\}:\, \text{ either }2i - 1 \in I\text{ or }2i \in I\text{, but not both}\}.
\end{align*}
Then, 
\begin{enumerate}
\item 
The action of $P$ on $W(D_n)/S_n$ has the $2^{m - 1}$ orbits $\mc O_{\mc J}:= \{g_I S_n\mid f(I) = {\mc J}\}$, ${\mc J}\ss L$, $|{\mc J}|$ even.

\smallbreak

\item
Let $\mc O_{\mc J}$ be an arbitrary orbit from (1). Put $A_\mc J:= \{i \le m:\, \{2i - 1, 2i\} \in \mc J\}$ and $B_\mc J:= \{i \le m:\, \{2i - 1, 2i\}\not \in \mc J\}$. Then, $P(\{a_i\}_{i \in B_\mc J}\cup\{b_j\}_{j \in A_\mc J})$ acts trivially on $\mc O_{\mc J}$ and the action of $P_{\mc J}:= P(\{a_i\}_{i \in A_\mc J}\cup\{b_j\}_{j \in B_\mc J})$ on $\mc O_{\mc J}$ is simply transitive.
\end{enumerate}
\end{lemma}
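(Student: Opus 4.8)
The plan is to trivialize the coset space $W(D_n)/S_n$ combinatorially and then read off both assertions from an explicit description of how the generating reflections of $P$ act. First I would use the decomposition $W(B_n)=S_n\ltimes(\Z/2)^n$: under it $S_n$ is the pure-permutation subgroup and $g_I=\prod_{i\in I}s_{e_i}$ has trivial permutation part and sign-change set $I$. Since the coset of a signed permutation is determined by its sign data, the assignment $g_IS_n\mapsto I$ identifies $W(D_n)/S_n$ with the set of even subsets $I\ss[1;n]$, of which there are $2^{n-1}$. Left translation then becomes an affine action in which an element with permutation part $\tau$ and sign-change set $u$ sends the coset labelled $I$ to $\tau(I)\Delta u$. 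Writing the reflections as signed permutations, $s_{a_i}=((2i-1,2i),\es)$ and $s_{b_i}=((2i-1,2i),\{2i-1,2i\})$, substitution yields the key local rules.

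Next I would record these rules pair by pair. On the coset labelled by $I$ the reflection $s_{a_i}$ interchanges the membership of $2i-1$ and $2i$, hence acts nontrivially exactly when precisely one of them lies in $I$; whereas $s_{b_i}$ toggles the membership of both $2i-1$ and $2i$ simultaneously, hence acts nontrivially exactly when both or neither lies in $I$. In either case the membership status of the pair $\{2i-1,2i\}$ recorded by $f$ is preserved, so $f$ is constant on $P$-orbits. As distinct pairs are handled independently, this shows each orbit is contained in a fibre $\{I:f(I)=\mc J\}$; a count (there are $2^{m-1}$ even $\mc J\ss L$, each fibre has $2^m$ elements, and $2^{m-1}\cdot2^m=2^{n-1}$) shows the fibres exhaust the coset space, which gives (1) as soon as each fibre is a single orbit.

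For (2), and hence for the transitivity still needed in (1), I would fix $\mc J$ and sort the $2m$ generators of $P$ according to whether they act trivially on $\mc O_{\mc J}$. By the local rules, on a coset with $f(I)=\mc J$ the generator $s_{a_i}$ is active exactly for $i\in A_{\mc J}$ and inactive for $i\in B_{\mc J}$, while $s_{b_j}$ is active exactly for $j\in B_{\mc J}$ and inactive for $j\in A_{\mc J}$. Hence $P(\{a_i\}_{i\in B_{\mc J}}\cup\{b_j\}_{j\in A_{\mc J}})$ fixes $\mc O_{\mc J}$ pointwise, and the complementary subgroup $P_{\mc J}$ of order $2^m$ is generated by $m$ commuting involutions, each flipping exactly one of the $m$ independent binary coordinates that label the $2^m$ elements of $\mc O_{\mc J}$. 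Thus $P_{\mc J}$ acts as the regular representation of $(\Z/2)^m$; since $|P_{\mc J}|=|\mc O_{\mc J}|=2^m$, transitivity alone already forces simple transitivity and closes the gap in (1).

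The only genuinely delicate point is the bookkeeping in the first step: correctly translating the geometric reflections $s_{a_i},s_{b_i}$ into their signed-permutation normal forms and fixing the semidirect-product convention so that the induced action on cosets comes out as $I\mapsto\tau(I)\Delta u$ with the correct twist. Once these conventions are pinned down the local rules are immediate, and the remainder is elementary independent-coordinate counting.
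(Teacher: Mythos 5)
Your proof is correct and is essentially the paper's argument in different clothing: your ``local rules'' are precisely the paper's commutation relations $s_{a_i}g_I = g_Is_{a_i}$ resp.\ $g_{I\Delta\{2i-1,2i\}}s_{a_i}$ (and the analogous ones for $s_{b_i}$), obtained here via the semidirect-product normal form, and your regular-representation/counting step plays the same role as the paper's freeness-plus-cardinality argument for simple transitivity. One phrasing caveat: $s_{b_i}$ acts on sign-sets by swap-then-toggle, so ``toggles the membership of both simultaneously'' describes its effect literally only in the active (both-or-neither) case; your stated activity criterion is nevertheless the correct one, so nothing downstream is affected.
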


\begin{proof}
(1) Let $I\ss [1;n]$. If $\{2i - 1, 2i\}\not \in f(I)$, then $s_{a_i}g_I = g_Is_{a_i}$ and $s_{b_i}g_I = g_{I\Delta\{2i - 1, 2i\}}s_{a_i}$, where $\Delta$ is the symmetric difference. On the other hand, if $\{2i - 1, 2i\} \in f(I)$, then $s_{a_i}g_I = g_{I\Delta\{2i - 1, 2i\}}s_{a_i}$ and $s_{b_i}g_I = g_Is_{a_i}$.

\smallbreak

	(2) By the proof of part (1), $P(\{a_i\}_{i \in B_\mc J}\cup\{b_j\}_{j \in A_\mc J})$ acts trivially on $\mc O_{\mc J}$. Since $|P(\{a_i\}_{i \in A_\mc J}\cup\{b_j\}_{j \in B_\mc J})| = 2^m = |\mc O_{\mc J}|$, assertion (2) follows after verifying that $P(\{a_i\}_{i \in A_\mc J}\cup\{b_j\}_{j \in B_\mc J})$ acts freely on $\mc O_{\mc J}$. So suppose, $I\ss [1;n]$, $M\ss A_\mc J$ and $N\ss B_\mc J$ is such that $f(I) = \mc J$ and $g:= \prod_{i \in M}s_{a_i} \cdot \prod_{j \in N}s_{b_j}$ fixes $g_IS_n$. The proof of part (1) gives that $g g_IS_n = g_{I'}S_n$, where $I' = I\Delta(\cup_{i \in M \cup N}\{2i - 1, 2i\})$. Observing that $I' = I$ if and only if $M = N = \es$ concludes the proof.
\end{proof}

Using Lemma \ref{pactLem}, we conclude the following. Consider an arbitrary $y = (\a_1, \dots, \a_m, \b_1, \dots, \b_m) \in H^1(k, P)$ and let $q_y \in H^1(k, O_{2^{n - 1}})$ be the quadratic form induced by the composition $P \to W(D_n) \to S_{2^{n - 1}} \to O_{2^{n - 1}}$. The decomposition of the action of $P$ into orbits $\mc O_{\mc J}$ induces a decomposition of $q_y$ as $q_y\cong \oplus_{\mc J}q_{\mc J}$. More precisely, the action of $P$ on $\mc O_{\mc J}$ induces a map $P \to S_{2^m}$ and $q_{\mc J}$ is defined to be the image of $y \in H^1(k, P)$ under the composition $P \to S_{2^m} \to O_{2^m}$. By Lemma \ref{pactLem}, this composition factors through the projection $P \to P_{\mc J}$. Now, by Lemma \ref{pfisterLemma}, its remark and Lemma \ref{pactLem},
\begin{align}
	\label{qjEq}
q_{\mc J}\cong \lan 2^m\ran\otimes \bigotimes_{i \in A_{\mc J}}\lan\lan - \a_i\ran\ran\otimes \bigotimes_{j \in B_{\mc J}}\lan\lan - \b_j\ran\ran.
\end{align}
Thus, the image of $q_y = \oplus_{\mc J}q_{\mc J}$ in $W(k)$ lies in $I^m(k)$, so that $\ms{res}^P_{W(D_n)}(\omega) \in \Inv(P, I^m)$.

\smallbreak

Now, we pass from $P$ to $W(D_n)$. First, $\omega$ induces an invariant $\ol{\omega} \in \Inv^0(W(D_n), I^*/I^{* + 1})$ through the projection $W \to (I^*/I^{* + 1})_0 = W/I$. Since the image of $\ms{res}^P_{W(D_n)}(\omega)$ lies in $I^m\ss I$, we conclude that $\ms{res}^P_{W(D_n)}(\ol{\omega}) = 0$. As $P$ is up to conjugation the only maximal elementary abelian $2$-subgroup of $W(D_n)$ generated by reflections, Corollary \ref{splitCorollary} gives that $\ol\omega = 0 \in \Inv^0(W(D_n), I^*/I^{* + 1})$, i.e., $\omega \in \Inv(W(D_n), I)$. Iterating this procedure $m$ times shows that $\omega \in \Inv(W(D_n), I^m)$. 

\smallbreak

By Example \ref{pfisterInvariant}, there exists an invariant $e_m:\, I^m(k) \to \MK_2(k)$ satisfying 
\begin{align}
	\label{emyEq}
e_m(\lan\lan\a_1\ran\ran\otimes \cdots\otimes\lan\lan\a_m\ran\ran) = \prod_{i \le m}\{\a_i\}.
\end{align}
Then, 
$$e_m(y):= e_m(\lan2^m\ran\otimes\omega(y)) + \{-1\}\sum_{k \le d - 1}u_{d - 1 - k} v_k$$ 
defines an element of $\Inv^m(W(D_n), \MK_\ast)$ and, in the vein of Lemma \ref{bnVdLem}, we now determine its restriction to $P$.
\begin{lemma}
	\label{dnEmLem}
\begin{equation}
\label{dnEm}
	\ms{res}^P_{W(D_n)}(e_m) = \sum_{\substack{(A, B, \es) \in \La^m\\ |A|\text{ even}}}x_{A, B, \es}
\end{equation}
\end{lemma}
\begin{proof}
	First, by identity \eqref{dnProdEq}, it suffices to show that the restriction of the invariant $e_m'(y) :=  e_m(\lan2^m\ran\otimes\omega(y))$ to $P$ is given by 
	\begin{align}
		\label{dnEmp}
		\sum_{\substack{(A, B, \es) \in \La^m\\ |A|\text{ even}}}x_{A, B, \es} + \{-1\}\sum_{\substack{(A, B, C) \in \La^m_{d - 1}}}x_{A, B, C}.
	\end{align}
	Then, by identities \eqref{qjEq} and \eqref{emyEq}, evaluating $\ms{res}^P_{W(D_n)}(e_m')$ at the torsor $(\a_1, \dots, \a_m, \b_1, \dots, \b_m) \in H^1(k, P)$ gives that 
	\begin{align*}
		\sum_{\substack{(A, B, \es) \in \La^m\\ |A|\text{ even}}}\prod_{i \in A}\{-\a_i\}\prod_{j \in B}\{-\b_j\} 
		&=\hspace{-.6cm} \sum_{\substack{(A, B, \es) \in \La^m\\ |A|\text{ even}}}\sum_{\substack{U \ss A\\ V \ss B}}\{-1\}^{m - |U| - |V|}\prod_{i \in U}\{\a_i\}\hspace{-.1cm}\prod_{j \in V}\{\b_j\}\\
		&= \hspace{-.5cm}\sum_{\substack{U, V \ss [1, m]\\ U \cap V = \es}}\hspace{-.1cm} N_{U, V}\{-1\}^{m - |U| - |V|}\prod_{i \in U}\{\a_i\}\hspace{-.0cm}\prod_{j \in V}\{\b_j\}.
	\end{align*}
	where 
	$$N_{U, V} := |\{A \ss [1,m]:\, A\supset U , A \cap V = \es , |A| \text{ even}\}|.$$
	To conclude the proof, we distinguish on the value of $|U| + |V|$. First, the contributions coming from $|U| +|V| = m$ give precisely the leading-order expression in \eqref{dnEmp}.
	Next, suppose that $|U| + |V| = m - k$ with $k \ge 1$. Then,  $N_{U, V} = 2^{k - 1}$, so that the corresponding contribution vanishes mod 2 if and only if $k \ge 1$. Now, we conclude the proof by noting that the contributions for $k = 1$ yield precisely the summation expression in \eqref{dnEmp}.
\end{proof}

\smallbreak

Now, we derive a central set of constraints for the image of the restriction map $\Inv(W(D_n), \CM_\ast) \to \Inv(P, \CM_\ast)$. For $d \le n$ and $i \le [d/2]$ put
$$
\phi^d_i:= \sum_{\substack{(A, B, C) \in \La^d\\|C| = i}}x_{A, B, C} \in \Inv^d(P, \MK_\ast)
$$
and
$
\psi_1:= \sum_{\substack{(A, B, \es)\\ |A|
\text{ even}}}x_{A, B, \es}.
$

\begin{lemma}
	\label{pactLem2}
The image of the restriction map $\Inv(W(D_n), \CM_\ast) \to \Inv(P, \CM_\ast)$ is contained in the free $\CM_\ast(\gk)$-module with basis 
$$
S = \{\phi^d_i:\, d \le n, \; \max(0, d - m) \le i \le [d/2]\}\cup R, 
$$
where $R = \es$, if $n$ is odd and $R = \{\psi_1\}$, if $n$ is even.
\end{lemma}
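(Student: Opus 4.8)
The plan is to combine Serre's splitting principle with an orbit computation for the normalizer action on $P$. By Remark \ref{splittingPrincipleRem} the group $P$ is, up to conjugacy, the unique maximal elementary abelian $2$-subgroup of $W(D_n)$ generated by reflections, so Proposition \ref{splitCorollary} shows that $\ms{res}^P_{W(D_n)}$ is injective with image inside the submodule $\Inv(P,\CM_\ast)^{N_{W(D_n)}(P)}$ of invariants fixed by the normalizer. Since $P\cong(\Z/2)^{2m}$, Section \ref{Z/2InvSec} tells us that $\{x_{A,B,C}\}$ is a $\CM_\ast(\gk)$-basis of $\Inv(P,\CM_\ast)$, and any element of the normalizer permutes this basis (the relabelling of coordinates induces no signs, as $\{-1\}=0$ because $-1\in\gk^{\times2}$). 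Hence it suffices to exhibit a subgroup $N\le N_{W(D_n)}(P)$, compute the orbits of its action on $\{x_{A,B,C}\}$, and invoke Corollary \ref{orbitSum}: the image then lies in the span of the orbit sums, which I will show are exactly the elements of $S$.

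For the subgroup $N$ I would take the elements $s_{e_{2i-1}-e_{2j-1}}s_{e_{2i}-e_{2j}}$, which lie in $W(D_n)$ and swap the orthogonal planes $\langle e_{2i-1},e_{2i}\rangle\lra\langle e_{2j-1},e_{2j}\rangle$ preserving the $a/b$-labelling, together with the pairwise sign changes $s_{e_{2i}}s_{e_{2j}}$, which also lie in $W(D_n)$ and interchange $s_{a_i}\lra s_{b_i}$ and $s_{a_j}\lra s_{b_j}$ simultaneously. For odd $n=2m+1$ I would add the elements $s_{e_{2i}}s_{e_n}$: since $s_{e_n}$ centralises $P$, these again lie in $W(D_n)$ and normalise $P$, but now act as a \emph{single} interchange $s_{a_i}\lra s_{b_i}$. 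On the index set these generators act as follows: the plane swaps realise the full symmetric group $S_m$ acting diagonally on $(A,B,C)$, while an interchange $\a_i\lra\b_i$ toggles the membership of $i$ between $A$ and $B$ (and fixes $i$ if $i\in C$ or $i\notin A\cup B\cup C$, since $\{\a_i\}\{\b_i\}$ is symmetric and an absent index contributes no factor).

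It remains to read off the orbits, and this is the only place where the combinatorics is delicate. A pairwise interchange $s_{e_{2i}}s_{e_{2j}}$ becomes a \emph{single} toggle of $i$ whenever the partner index $j$ is free, i.e. lies in $C$ or outside $A\cup B\cup C$; for odd $n$ the auxiliary coordinate $e_n$ always supplies such a partner. Thus, as long as either $C\ne\es$ or some index is unused (equivalently $|A|+|B|+|C|<m$), the parity of $|A|$ can be changed, and together with $S_m$ the orbit of $(A,B,C)$ is the full set $\{(A',B',C'):|C'|=|C|,\ |A'|+|B'|=|A|+|B|\}$, whose orbit sum is $\phi^{d}_{|C|}$ with $d=|A|+|B|+2|C|$. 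The single exceptional orbit occurs exactly when $C=\es$ and $A\udot B=[1;m]$, forcing $d=m$ and $|C|=0$; here no free partner exists, only pairwise interchanges among $A$- and $B$-indices are available, and these preserve the parity of $|A|$. For odd $n$ this case never arises (the coordinate $e_n$ is a free partner), which is why $R=\es$; for even $n$ the orbit underlying $\phi^m_0$ splits into the two parity classes, with orbit sums $\psi_1$ (the class $|A|$ even) and $\phi^m_0-\psi_1$. Corollary \ref{orbitSum} then identifies $\Inv(P,\CM_\ast)^N$ as the free module on the orbit sums, namely $\{\phi^d_i\}$ over the stated range of $(d,i)$ together with, for even $n$, the extra generator $\psi_1$; since $\{\phi^m_0,\psi_1\}$ and $\{\phi^m_0-\psi_1,\psi_1\}$ span the same submodule, this is precisely the free module with basis $S$. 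The main obstacle is exactly this degenerate-orbit analysis and correctly tracking why the extra invariant $\psi_1$ survives only in the even case.
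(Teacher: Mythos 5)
Your proposal is correct and follows essentially the same route as the paper: restrict to a normalizer subgroup generated by plane swaps $s_{e_{2i-1}-e_{2j-1}}s_{e_{2i}-e_{2j}}$ and pairwise sign changes (the paper uses $s_{e_{2i-1}}s_{e_{2j-1}}$ where you use $s_{e_{2i}}s_{e_{2j}}$, an immaterial difference), observe that these permute the basis $\{x_{A,B,C}\}$, and apply Corollary \ref{orbitSum} after the orbit analysis. Your case distinction — single toggles exist whenever $C\ne\es$, an index is unused, or $n$ is odd via the partner $e_n$, with the lone degenerate parity-split orbit at $C=\es$, $A\udot B=[1;m]$, $d=m$ for even $n$ — is exactly the paper's argument, including the final observation that $\{\psi_1,\phi^m_0-\psi_1\}$ and $\{\psi_1,\phi^m_0\}$ span the same submodule.
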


\begin{proof}
	Arguing as in the $B_n$-section shows that all elements of $S$ are non-zero. Furthermore, both
$s_{e_{2i - 1} - e_{2j-1}} s_{e_{2i} - e_{2j}}$ and $s_{e_{2i - 1}} s_{e_{2j-1}}$ normalize $P$.

	Let us denote by $N_1, N_2\ss N(P)$ the subgroups generated by the first, respectively second kind of elements and let us denote by $N$ the subgroup generated by $N_1$ and $N_2$. At the torsor level, conjugation by the first kind of elements swaps $\a_i \lra \a_j$ and $\b_i \lra \b_j$. Thus for $(A, B, C) \in \La^d$, the invariant $x_{A, B, C}$ maps to $x_{A', B', C'}$, where $A' = (i, j)A$, $B' = (i, j)B$ and $C' = (i, j)C$. On the other hand, conjugation by the second kind of elements swaps $\a_i \lra \b_i$ and $\a_j \lra \b_j$. Thus, it maps $x_{A, B, C}$ to $x_{A', B', C}$, where $A' = (A - \{i, j\})\cup (B \cap \{i, j\})$ and $B' = (B - \{i, j\})\cup (A \cap\{i, j\})$. That is, if $i \in A$, we remove it from $A$ and put it into $B$ and vice versa; then we do the same for $j$. Thus, $N$ acts on $\Inv (P, \MK_\ast)$ by permuting the $x_{A, B, C}$ and hence we can apply Corollary \ref{orbitSum}. 
	
	In the next step, we determine the orbit of $x_{A_0, B_0, C_0}$ under $N$ for an arbitrary $(A_0, B_0, C_0) \in \La^d$. First, suppose that $n$ is odd or that $C_0 \ne \es$ or that ($n = 2m$ is even and $d < m$). Then, we claim that the orbit of $x_{A_0, B_0, C_0}$ under $N_2$ is given by $\{x_{A, B, C_0}:\,(A, B, C_0) \in \La^d, \;A\cup B = A_0\cup B_0\}$. It suffices to show that for any $a \in A_0$, there exists an element of $N_2$ mapping $x_{A_0, B_0, C_0}$ to $x_{A_0-\{a\}, B_0\cup\{a\}, C_0}$. As soon as this is proven, one observes that the symmetric statement with $b \in B_0$ also holds; iterating these operations, we indeed get the claimed orbit. For $n$ odd, $s_{e_{2a-1}} s_{e_n}$ maps $x_{A_0, B_0, C_0}$ to $x_{A_0-\{a\}, B_0\cup\{a\}, C_0}$. If $C_0 \ne \es$ choose $c \in C_0$; then $s_{e_{2a-1}} s_{e_{2c-1}}$ maps $x_{A_0, B_0, C_0}$ to $x_{A_0-\{a\}, B_0\cup\{a\}, C_0}$. Finally, if $n = 2m$ is even and $d < m$, then there exists $i \in [1;m]$ such that $i\not \in A_0\cup B_0\cup C_0$ and the element $s_{e_{2a-1}} s_{e_{2i - 1}}$ does the trick. Thus, the orbit of $x_{A_0, B_0, C_0}$ under $N_2$ equals $\{x_{A, B, C_0}:\,(A, B, C_0) \in \La^d, \;A\cup B = A_0\cup B_0\}$. Similarly, for any $(A_1, B_1, C_1) \in \La^d$ the orbit of $x_{A_1, B_1, C_1}$ under $N_1$ equals $\{x_{A, B, C}:\,(A, B, C) \in \La^d, \;|A| = |A_1|, \;|B| = |B_1|, \;|C| = |C_1|\}$. Combining these results, the orbit of $x_{A_0, B_0, C_0}$ under $N$ is given by $\{x_{A, B, C}:\,(A, B, C) \in \La^d, \;|C| = |C_0|\}$. 

\smallbreak

Finally, let $C_0 = \es$, $n = 2m$ be even and $d = m$. Then, the orbit of $x_{A_0, B_0, \es}$ under $N_2$ equals $\{x_{A, B, \es}:\,(A, B, \es) \in \La^d, \;A\cup B = A_0\cup B_0, |B| - |B_0|\;\text{is even}\}$. Using that for any $(A_1, B_1, C_1) \in \La^d$ the orbit of $x_{A_1, B_1, C_1}$ under $N_1$ is given by $\{x_{A, B, C}:\, (A, B, C) \in \La^d, \;|A| = |A_1|, \;|B| = |B_1|, \;|C| = |C_1|\}$, we see that the orbit of $x_{A_0, B_0, \es}$ under $N$ is $\{x_{A, B, \es}:\, (A, B, \es) \in \La^d, \;|B| -|B_0|\;\text{is even}\}$.

\smallbreak

Hence, applying Corollary \ref{orbitSum} concludes the proof.
\end{proof}

In particular, as Lemma \ref{bnLemma1} gives that $\ms{res}^P_{W(D_n)}(u_{d - 2i}v_{2i}) = \phi^d_i$ and as $\ms{res}^P_{W(D_n)}(e_m) = \psi_1$ and , we obtain the following result.
\begin{corollary}
	\label{dnCor}
$\Inv (W(D_n), \CM_\ast)$ is completely decomposable with basis
$$
\{u_{d - 2i}v_{2i}:\, d \le n, \max(0, d - m) \le i \le [d/2]\}\cup R, 
$$
where $R = \es$ for odd $n$ and $R = \{e_m\}$ for even $n$.
\end{corollary}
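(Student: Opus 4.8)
The plan is to deduce the corollary formally from the injectivity of the restriction to $P$ together with the two structural lemmas already in hand. By Remark~\ref{splittingPrincipleRem}, the group $W(D_n)$ has, up to conjugacy, a single maximal elementary abelian $2$-subgroup $P$ generated by reflections, so Proposition~\ref{splitCorollary} guarantees that
$$\ms{res}^P_{W(D_n)}:\,\Inv(W(D_n),\CM_\ast)\to\Inv(P,\CM_\ast)$$
is injective. Hence it suffices to pin down the image of this map and to exhibit preimages of a basis of it.

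First I would invoke Lemma~\ref{pactLem2}, which confines the image to the free $\CM_\ast(\gk)$-submodule $S$ of $\Inv(P,\CM_\ast)$ with basis $\{\phi^d_i:\,d\le n,\ \max(0,d-m)\le i\le[d/2]\}\cup R$, where $R=\{\psi_1\}$ for even $n$ and $R=\es$ otherwise. Next I would match this basis with the constructed invariants: Lemma~\ref{bnLemma1} gives $\ms{res}^P_{W(D_n)}(u_{d-2i}v_{2i})=\phi^d_i$ across the full admissible range, and the computation preceding the statement gives $\ms{res}^P_{W(D_n)}(e_m)=\psi_1$ in the even case. Because the $\phi^d_i$ and $\psi_1$ are disjoint (or, for $\phi^m_0$ versus $\psi_1$, independent) combinations of the basis vectors $x_{A,B,C}$ of the free module $\Inv(P,\CM_\ast)$, these restrictions are $\CM_\ast(\gk)$-linearly independent, so the image of $\ms{res}^P_{W(D_n)}$ contains, hence equals, $S$.

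Combining these facts, $\ms{res}^P_{W(D_n)}$ is an isomorphism onto $S$ carrying $\{u_{d-2i}v_{2i}\}\cup R$ bijectively onto the chosen basis. Consequently this family is itself a basis: given $a\in\Inv(W(D_n),\CM_\ast)$, write $\ms{res}^P_{W(D_n)}(a)=\sum\phi^d_i\,m_{d,i}$ (plus $\psi_1 m'$ when $n$ is even) by Lemma~\ref{pactLem2}, substitute $\phi^d_i=\ms{res}^P_{W(D_n)}(u_{d-2i}v_{2i})$ and $\psi_1=\ms{res}^P_{W(D_n)}(e_m)$, and cancel the injective restriction map to obtain the unique decomposition $a=\sum u_{d-2i}v_{2i}\,m_{d,i}$ (plus $e_m m'$). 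To upgrade this to complete decomposability in the sense of Definition~\ref{decomposableDef}(ii), I note that every ingredient---injectivity from Proposition~\ref{splitCorollary}, the image bound of Lemma~\ref{pactLem2}, and the product identities of Lemma~\ref{bnLemma1}---persists after base change to any $k\in\Fields_{\gk}$, and the restrictions $\ms{res}_{k/\gk}(u_{d-2i}v_{2i})$ and $\ms{res}_{k/\gk}(e_m)$ are computed by the same formulas, so the argument runs verbatim over $k$.

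Given that Lemmas~\ref{pactLem2} and~\ref{bnLemma1} are already established, the corollary is mostly formal bookkeeping, and the only real care needed is in the ranges and the even/odd dichotomy. Concretely, the two points I would verify are: that the products $u_{d-2i}v_{2i}$ with $(d-m)/2\le i<d-m$ are genuinely redundant---they restrict to $\phi^d_i=0$ and so vanish by injectivity, which is why the lower bound $i\ge\max(0,d-m)$ appears---and that for even $n$ the single extra generator $e_m$ (equivalently $\psi_1$), arising from the parity obstruction built into the embedding $W(D_n)\hookrightarrow S_{2^{n-1}}$, is exactly what is missing from the naive list $\{u_{d-2i}v_{2i}\}$. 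This even/odd parity correction is the subtlest point, but it has been localized entirely inside Lemma~\ref{pactLem2}.
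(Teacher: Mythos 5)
Your proposal is correct and follows essentially the same route as the paper: injectivity of $\ms{res}^P_{W(D_n)}$ from Remark~\ref{splittingPrincipleRem} and Proposition~\ref{splitCorollary}, the image bound from Lemma~\ref{pactLem2}, and the identification $\ms{res}^P_{W(D_n)}(u_{d-2i}v_{2i})=\phi^d_i$ (Lemma~\ref{bnLemma1}) together with $\ms{res}^P_{W(D_n)}(e_m)=\psi_1$, after which the basis statement is formal. Your additional checks (linear independence of the $\phi^d_i$ and $\psi_1$ inside the free module $\Inv(P,\CM_\ast)$, the vanishing of the products with $i<d-m$, and stability under base change to arbitrary $k\in\Fields_{\gk}$) are exactly the bookkeeping the paper leaves implicit.
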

\begin{remark}
	A relation between $W(B_n)$ and $W(D_n)$ explains why in Corollary \ref{dnCor}, we only see $v_d$ with even $d$. Indeed, the kernel of the determinant of the $2n$-dimensional representation of $W(B_n)$ contains $W(D_n)$. Since for odd $d$, all the $W(B_n)$-invariants $v_d$ are divisible by $v_1$ and since $v_1$ is vanishing, we deduce that they all reduce to 0 on $W(D_n)$.
\end{remark}

%%%%%%%%%%%%%%%%%%%%%%%%
%E6-8
%%%%%%%%%%%%%%%%%%%%%%%

\goodbreak
\section{Weyl groups of type $E_6$, $E_7$, and $E_8$.}
\label{E6-8SubSect}

\subsection{Type $E_6$}
Up to conjugacy, $P:= P(a_1, b_1, a_2, b_2)$ is the unique maximal elementary abelian subgroup generated by reflections in $W(E_7)$. Since the injection $\Inv (W(E_6), \CM_\ast) \to \Inv (P, \CM_\ast)$ factors through $\Inv (W(D_5), \CM_\ast)$, the map $\Inv (W(E_6), \CM_\ast) \to \Inv (W(D_5), \CM_\ast)$ is injective and a basis of $\Inv (W(D_5), \CM_\ast)$ is given by $\{1, u_1, u_2, v_2, v_2u_1, v_4\}$.

\smallbreak

So let $a \in \Inv (P, \CM_\ast)$ be an invariant which comes from a $W(E_6)$-invariant. Since the inclusion
$P\ss W(E_6)$ factors through $W(D_5)\ss W(E_6)$, $a$ decomposes uniquely as
$$
a = \sum_{\substack{ d \le 4\\d \ne 2}}\sum_{(A, B, C) \in \La^d}x_{A, B, C}m_d + 
\sum_{(A, B, \es) \in \La^2}x_{A, B, \es}m_2 + \sum_{(\es, \es, C) \in \La^2}x_{\es, \es, C}m_2'
$$
for certain $m_d \in \CM_{\ast -d}(\gk)$, $m_2, m_2' \in \CM_{\ast -2}(\gk)$. Now, the element
$$
g:= s_{\frac12(e_1 - e_2 - e_3 - e_4 - e_5 - e_6 - e_7 + e_8)} s_{\frac12( - e_1 + e_2 + e_3 + e_4 - e_5 - e_6 - e_7 + e_8)} \in W(E_6)
$$
lies in the normalizer of $P$, since 
\begin{align*}
g s_{a_1} g^{-1} = s_{b_2}, \qquad
g s_{b_1} g^{-1} = s_{b_1}, \qquad
g s_{a_2} g^{-1} = s_{a_2}, \qquad
g s_{b_2} g^{-1} = s_{a_1}.
\end{align*}
The induced action of $g$ on a $P$-torsor $(\a_1, \a_2, \b_1, \b_2)$ is thus given by swapping $\a_1 \lra \b_2$, while leaving $\a_2, \b_1$ fixed. Therefore, applying $g$ to the invariant $a$ yields
\begin{align*}
\sum_{\substack{ d \le 4\\d \ne 2}} \sum_{(A, B, C) \in \La^d}x_{A, B, C}m_d
	+ \sum_{i,j \in \{1, 2\}}x_{\{a_i, b_j\}} m_2 + (x_{\{a_1, a_2\}} + x_{\{b_1, b_2\}})m_2'.
\end{align*}
Since $a$ comes from an invariant of $W(E_6)$, it stays invariant under $g$ and comparing coefficients, we conclude that the image of the restriction $\Inv (W(E_6), \CM_\ast) \to \Inv (W(D_5), \CM_\ast)$ lies in the free submodule with basis 
$$\{1, u_1, u_2 + v_2, v_2u_1, v_4\}.$$
The embedding of $W(E_6)$ in $O_8$ as orthogonal reflection group gives rise to the invariants $\ms{res}^{W(E_6)}_{O_8}(\wt{w_d}) \in \Inv^d(O_8, \MK_\ast)$, which we again denote by $\wt{w_d}$. For any $k \in \Fields_{\gk}$ and $(\a_1, \b_1, \a_2, \b_2) \in (k^\times/k^{\times 2})^4$, the map $P \to W(E_6)\ss O_8$ induces the quadratic form
$$
\lan2\a_1, 2\b_1, 2\a_2, 2\b_2, 1, 1, 1, 1\ran.
$$
Thus, the total modified Stiefel-Whitney class evaluated at this torsor equals 
$$
(1 + \{\a_1\})(1 + \{\a_2\})(1 + \{\b_1\})(1 + \{\b_2\}).
$$
Now,
\begin{align*}
\ms{res}^P_{W(D_5)}(u_1)& = \ms{res}^P_{W(E_6)}(\wt{w_1}),
	&\ms{res}^P_{W(D_5)}(u_2 + v_2) &= \ms{res}^P_{W(E_6)}(\wt{w_2}),\\
	\ms{res}^P_{W(D_5)}(v_2u_1)& = \ms{res}^P_{W(E_6)}(\wt{w_3}),
	&\ms{res}^P_{W(D_5)}(v_4) &= \ms{res}^P_{W(E_6)}(\wt{w_4}).
\end{align*}
Hence, $\{\wt{w_d}\}_{d \le 4}$ form a basis of $\Inv(W(E_6), \CM_\ast)$ as $\CM_\ast(\gk)$-module.

\medbreak

\subsection{Type $E_7$}
Up to conjugacy, $P:= P(a_1, b_1, a_2, b_2, a_3, b_3, a_4)$ is the unique maximal elementary abelian subgroup generated by reflections in $W(E_7)$. Looking at the root systems, we see that there is an inclusion $W(D_6)\times \lan s_{a_4}\ran\ss W(E_7)$.
Invoking the same factorization argument as before, the restriction map
$$
\Inv (W(E_7), \CM_\ast) \to \Inv (W(D_6)\times\lan s_{a_4}\ran, \CM_\ast )
$$
is injective. We first recall that $\Inv (W(D_6)\times \lan s_{a_4}\ran, \CM_\ast)$ is a free $\CM_\ast(\gk)$-module with basis
\begin{enumerate}
\item[(0)] 1

\smallbreak

\item $u_1, x_{\{a_4\}}$

\smallbreak

\item $u_2, v_2, u_1x_{\{a_4\}}$

\smallbreak

\item $(u_3 - e_3), e_3, u_1v_2, u_2x_{\{a_4\}}, v_2x_{\{a_4\}}$

\smallbreak

\item $u_2v_2, v_4, (u_3 - e_3)x_{\{a_4\}}, e_3x_{\{a_4\}}, u_1v_2x_{\{a_4\}}$

\smallbreak

\item $v_4u_1, u_2v_2x_{\{a_4\}}, v_4x_{\{a_4\}}$

\smallbreak

\item $v_6, v_4u_1x_{\{a_4\}}$

\smallbreak

\item $v_6x_{\{a_4\}}.$
\end{enumerate}
Defining $g:= s_{\frac12(e_1 - e_2 - e_3 - e_4 - e_5 - e_6 - e_7 + e_8)} s_{\frac12( - e_1 + e_2 + e_3 + e_4 - e_5 - e_6 - e_7 + e_8)} \in W(E_7)$ as in the $E_6$-case yields that
\begin{align*}
g s_{a_1} g^{-1}& = s_{b_2},
	&g s_{b_1} g^{-1}& = s_{b_1},
	&g s_{a_2} g^{-1}& = s_{a_2},
	&g s_{b_2} g^{-1}& = s_{a_1},\\
	g s_{a_3} g^{-1} &= s_{a_3},
	&g s_{b_3} g^{-1}& = s_{a_4},
	&g s_{a_4} g^{-1}& = s_{b_3}.
\end{align*}
The action of $g$ on a $P$-torsor $(\a_1, \b_1, \dots, \a_3, \b_3, \a_4) \in (k^\times/k^{\times 2})^7$ is thus given by swapping $\a_1 \lra \b_2$, $\b_3 \lra\a_4$ while leaving $\b_1, \a_2, \a_3$ fixed. Arguing just as in the $E_6$-case, we see that the image of $\Inv (W(E_7), \CM_\ast) \to \Inv (W(D_6)\times\lan s_{a_4}\ran, \CM_\ast)$ lies in the free $\CM_\ast(\gk)$-module with basis 
\begin{enumerate}
\item[(0)] 1

\smallbreak

\item $u_1 + x_{\{a_4\}}$

\smallbreak

\item $v_2 + u_2 + u_1x_{\{a_4\}}$

\smallbreak

\item $u_1v_2 + (u_3 - e_3) + u_2x_{\{a_4\}}, e_3 + v_2x_{\{a_4\}}$

\smallbreak

\item $v_4 + (u_3 - e_3)x_{\{a_4\}}, u_2v_2 + u_1v_2x_{\{a_4\}} + e_3x_{\{a_4\}}$

\smallbreak

\item $v_4x_{\{a_4\}} + u_2v_2x_{\{a_4\}} + v_4u_1$

\smallbreak

\item $v_4u_1x_{\{a_4\}} + v_6$

\smallbreak

\item $v_6x_{\{a_4\}}$.
\end{enumerate}
Now, we provide specific $W(E_7)$-invariants. First, the embedding $W(E_7)\ss O_8$ gives us invariants $\ms{res}^{W(E_7)}_{O_8}(\wt{w_d}) \in \Inv^d(W(E_7), \MK_\ast)$, which we again denote by $\wt{w_d}$. Then,
\begin{align*}
\ms{res}^P_{W(E_7)}(\wt{w_1})& = \ms{res}^P_{W(D_6)\times\lan s_{{a_4}}\ran}(u_1 + x_{\{a_4\}})\\
\ms{res}^P_{W(E_7)}(\wt{w_2})& = \ms{res}^P_{W(D_6)\times\lan s_{{a_4}}\ran}(u_2 + v_2 + u_1x_{\{a_4\}})\\
\ms{res}^P_{W(E_7)}(\wt{w_3})& = \ms{res}^P_{W(D_6)\times\lan s_{{a_4}}\ran}(u_3 + u_1v_2 + u_2x_{\{a_4\}} + v_2x_{\{a_4\}})\\
\ms{res}^P_{W(E_7)}(\wt{w_4})& = \ms{res}^P_{W(D_6)\times\lan s_{{a_4}}\ran}(u_2v_2 + v_4 + u_3x_{\{a_4\}} + u_1v_2x_{\{a_4\}})\\
\ms{res}^P_{W(E_7)}(\wt{w_5})& = \ms{res}^P_{W(D_6)\times\lan s_{{a_4}}\ran}(v_4u_1 + v_4x_{\{a_4\}} + u_2v_2x_{\{a_4\}})\\
\ms{res}^P_{W(E_7)}(\wt{w_6})& = \ms{res}^P_{W(D_6)\times\lan s_{{a_4}}\ran}(v_6 + v_4u_1x_{\{a_4\}})\\
\ms{res}^P_{W(E_7)}(\wt{w_7})& = \ms{res}^P_{W(D_6)\times\lan s_{{a_4}}\ran}(v_6x_{\{a_4\}}).
\end{align*}
So we still lack invariants in degree $3$ and $4$. To construct the missing invariant in degree $3$, we mimic the construction of the invariant ${e_m}$ in the $D_n$-section. Let $U\cong S_6\times \lan s_{a_4}\ran \ss W(E_7)$ be the subgroup generated by the reflections at
$$
\{e_1 + e_2, e_2 - e_3, e_3 - e_4, e_4 - e_5, e_5 - e_6, e_7 - e_8\}.
$$
Then, $|U\backslash W(E_7)| = 2016$
and we obtain a map $W(E_7) \to S_{2016} \to O_{2016}$. To be more precise, there is a right action of $W(E_7)$ on the right cosets $U\backslash W(E_7)$ given by right multiplication. This induces an anti-homomorphism $W(E_7) \to S_{2016}$ and precomposing this map with $g\mapsto g^{-1}$, we obtain the desired homomorphism. We need the following lemma which tells us that we are in a situation which is quite similar to the $D_n$-case:

\begin{lemma}
\label{e7GapLem}
Let $k \in \mc F_{\gk}$ and $y \in H^1(k, P)$ be a $P$-torsor. Let $q_y$ be the quadratic form induced by $y$ under the composition $P \to W(E_7) \to S_{2016} \to O_{2016}$. Then, the image of $q_y$ in $W(k)$ is contained in $I^3(k)$.
\end{lemma}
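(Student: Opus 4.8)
The plan is to follow the template of the $D_n$-computation (Lemma~\ref{pactLem}): decompose the permutation action of $P$ into orbits, identify the form attached to each orbit as a Pfister form via Lemma~\ref{pfisterLemma}, and then sum up. First I would split the $P$-set $U\backslash W(E_7)$ into its $P$-orbits $\mc O$. Exactly as in the $D_n$-section, because $q_y$ is the restriction of the standard scalar product to the twisted permutation module, distinct orbits span orthogonal coordinate subspaces and this orbit decomposition is orthogonal, yielding
$$
q_y \;\cong\; \bigoplus_{\mc O} q_{\mc O},
$$
where $q_{\mc O}$ is the quadratic form attached to the action of $P$ on the single orbit $\mc O$.

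Next I would analyze a single orbit. Fixing a base point $Ug \in \mc O$, the stabilizer in $P$ is $P \cap g^{-1}Ug$; since $P$ is elementary abelian this subgroup is normal, the orbit is isomorphic to $P/(P\cap g^{-1}Ug)$, and $P$ acts on $\mc O$ through a \emph{simply transitive} action of the elementary abelian quotient $\ol P_{\mc O}:=P/(P\cap g^{-1}Ug)$ of rank $r_{\mc O}=\log_2|\mc O|$. By the remark following Lemma~\ref{pfisterLemma}, every simply transitive action of an elementary abelian $2$-group on $2^{r_{\mc O}}$ points is conjugate in the symmetric group to the standard bit-flipping action, and such a conjugation is an isometry of the associated forms. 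Hence Lemma~\ref{pfisterLemma} applies verbatim and presents $q_{\mc O}$ as $\lan 2^{r_{\mc O}}\ran$ times an $r_{\mc O}$-fold Pfister form; in particular $q_{\mc O}\in I^{r_{\mc O}}(k)$.

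It then remains to prove $r_{\mc O}\ge 3$, i.e. $|\mc O|\ge 8$, for every orbit. This step carries the quantitative content, and here I would use a purely group-theoretic rank bound rather than an explicit orbit count. The stabilizer $P\cap g^{-1}Ug$ is an elementary abelian $2$-subgroup of the conjugate $g^{-1}Ug\cong U\cong S_6\times\lan s_{a_4}\ran$. Since the maximal elementary abelian $2$-subgroup of $S_6$ has rank $\lfloor 6/2\rfloor=3$ (realized by three disjoint transpositions), the $2$-rank of $U$ equals $3+1=4$. Therefore $|P\cap g^{-1}Ug|\le 2^4$, and as $|P|=2^7$ we get
$$
|\mc O|\;=\;\frac{|P|}{|P\cap g^{-1}Ug|}\;\ge\;\frac{2^7}{2^4}\;=\;8,
$$
so that $r_{\mc O}\ge 3$ and $q_{\mc O}\in I^3(k)$ for each orbit. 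Summing orthogonally over all orbits, and using that $I^3(k)$ is closed under orthogonal sum, gives $q_y\in I^3(k)$.

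The only genuine subtlety is the orbit-by-orbit identification: one must check that, after restricting to a single orbit and passing to the quotient $\ol P_{\mc O}$, the remark following Lemma~\ref{pfisterLemma} really does force $q_{\mc O}$ to be a scalar multiple of an $r_{\mc O}$-fold Pfister form. Granting that, the $I^3$-containment reduces to the elementary $2$-rank estimate for $U$ and requires no machine computation. I would stress, however, that this argument yields only membership in $I^3$; to extract the missing degree-$3$ invariant afterwards --- by projecting $q_y$ to $I^3/I^4$ and comparing with $e_3$, in the spirit of the construction of $e_m$ in the $D_n$-section --- one needs the explicit Pfister entries orbit by orbit, and it is for computing those data that the {\tt GAP}-program of the appendix becomes indispensable.
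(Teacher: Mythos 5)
Your proposal is correct, but it proves the lemma by a genuinely different route than the paper. The paper's proof is entirely computational: the {\tt GAP} procedure {\tt fullCheck} enumerates all $P$-orbits on $U\backslash W(E_7)$ and, for each orbit $\Oc_k$, exhibits a subset $A_k$ of the standard generators $\{a_1, b_1, \dots, a_4\}$ such that $P(A_k)$ acts simply transitively on $\Oc_k$ while $P$ of the complementary generators acts trivially, and then verifies that every $A_k$ has at least $3$ elements; the containment in $I^3$ then follows as in the $D_n$-case. You replace both machine checks by structural arguments: (a) since $P$ is abelian, all point stabilizers in a single orbit coincide, so the action on each orbit automatically factors through a simply transitive action of the elementary abelian quotient $P/(P \cap g^{-1}Ug)$ --- this makes the coordinate-subgroup property (whose existence the paper itself calls ``a priori'' unclear) unnecessary for the lemma, because the remark following Lemma \ref{pfisterLemma} applies to any regular action and conjugation by a $k$-rational permutation matrix does not change the class in $H^1(k, O_{2^r})$; and (b) the quantitative bound $|\Oc_k| \ge 8$ follows from orbit-stabilizer together with the fact that the stabilizer $P \cap g^{-1}Ug$ is an elementary abelian $2$-subgroup of $g^{-1}Ug \cong S_6 \times \lan s_{a_4}\ran$, whose $2$-rank is $3 + 1 = 4$. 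The one fact you assert without proof --- that the $2$-rank of $S_6$ is $3$ --- follows from the same regularity observation: a faithful action of $(\Z/2)^r$ on $6$ points has $2$-power orbit sizes on which the image acts regularly, forcing $r \le 3$. What your approach buys is a computation-free and uniform proof: the identical argument disposes of Lemma \ref{e8GapLem} as well, since there $U \cong S_8$ has $2$-rank $4$ and $|P| = 2^8$, so every orbit has size at least $2^4$ and $q_y \in I^4(k)$. What the paper's computation buys, and what your argument deliberately does not provide, is the explicit orbit data --- which torsor coordinates enter each Pfister factor --- which is exactly what is needed immediately after the lemma to compute $\ms{res}^P_{W(E_7)}(f_3)$ and identify it with $u_1v_2 + (u_3 - e_3) + u_2 x_{\{a_4\}}$; you correctly flag this limitation yourself.
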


\begin{proof}
This can be checked by a computational algebra system, see the appendix.
\end{proof}

\medbreak

\noindent
We now argue similarly to the $D_n$-case.
In concrete terms, if $y$ is a $W(E_7)$-torsor, and $q_y$ is the quadratic form induced by $y$ under the composition $W(E_7) \to S_{2016} \to O_{2016}$, then the image of $q_y$ in $W(k)$ is contained in $I^3(k)$ and we define the invariant 
\begin{align}
	\label{f3pEq}
	f_3'(y):= e_3(\lan 2^3\ran\otimes q_y).
\end{align}
In the $D_n$-case, namely in Lemma \ref{dnEmLem}, we could compute the restriction of the invariant $e_m$ to the maximal elementary abelian $2$-subgroup explicitly. In principle, this would also be possible in the present setting. However, the computations would be substantially more involved. Therefore, we provide a more conceptual level argument. To that end, we recall from Section \ref{DnSubSect} that if $g \in W(E_7)$ is contained in the normalizer $N_{W(E_7)}(P)$ of $P$ in $W(E_7)$, then $g$ acts both on the invariants $\{x_{A, B, C}\}_{(A, B, C)\in \La^d} \in \Inv^d (P, \CM_\ast)$ as well as on the indexing set $\La^d$.
\begin{lemma}
	\label{orbitLem}
	Let $d \le 7$ and $g \in N_{W(E_7)}(P)$. Also, let $a \in \Inv^d(W(E_7), \MK_\ast)$ be an invariant and represent its restriction to $\Inv^d(P, \MK_\ast)$ as
	\begin{align}
		\label{orbitEq}
		\ms{res}^P_{W(E_7)}(a) = \sum_{\ell \le d} \sum_{I \in \La^{\ell}}m_Ix_I,
	\end{align}
	for certain coefficients $m_I \in \MK_{d - |I|}(\gk)$. Then, $m_I = m_{g(I)}$ for all $\ell \le d$ and $I \in \La^\ell$.
\end{lemma}
\begin{proof}
	First, since the restriction is invariant under the action of $g$, 
	\begin{align}
		\label{orbitEq2}
		 \sum_{\ell \le d} \sum_{I \in \La^{d - \ell}}(m_I - m_{g(I)})x_I = 0.
	\end{align}
	Now, suppose that the assertion of the lemma was false, and choose a counterexample  $I^* \in \La^{\ell^*}$ with maximal $\ell^*$. Then, we first evaluate both sides of \eqref{orbitEq} at the function field $E = k_0(A_1, B_1, \dots, A_3, B_3, A_4)$ in the indeterminates $A_1, B_1, \dots, A_3, B_3, A_4$ corresponding to the roots in $P$, and then apply the Milnor residue maps corresponding to the indeterminates associated with the index set $I^*$. Since $\ell^*$ was chosen to be maximal, the identity \eqref{orbitEq2} reduces to $m_I - m_{g(I)} = 0$, which concludes the proof.
\end{proof}
In words, just as in Corollary \ref{orbitSum}, when representing the restrictions of invariants as in \eqref{orbitEq}, then basis elements in the same orbit share the same coefficient. 

In particular, we have seen above that in degree 1 and 2 all basis elements are in a single orbit and are therefore the restriction of the corresponding modified Stiefel-Whitney classes. Thus, applying Lemma \ref{orbitLem} with $a = f_3'$, there exist $m_\ell \in \MK_{3 - \ell}(\gk)$, $\ell \in \{0, 1, 2\}$ and $m_{A, B, C} \in \Z/2$, $(A, B, C) \in \La^3$ such that
$$\ms{res}^P_{W(E_7)}(f_3') = \sum_{(A, B, C) \in \La^3}m_{A, B, C}x_{A, B, C} + \sum_{\ell \le 2}m_\ell \ms{res}^P_{W(E_7)}(\wt{w_\ell}).$$
Then, proceeding as in the definition of $e_m$ in Section \ref{DnSubSect}, we define an invariant $f_3 \in \Inv^3(W(E_7), \MK_\ast)$ by stripping of the mixed terms from $f_3'$. That is,
$$f_3 := f_3' - \sum_{\ell \le 2}m_\ell \wt{w_\ell}.$$
In the appendix, we expound on how a computational algebra system shows that
\begin{align}
	\label{e7BarEq}
 \ms{res}^P_{W(E_7)}(f_3)= \ms{res}^P_{W(D_6)\times\lan s_{a4}\ran}(u_1v_2 + u_3 - e_3 + u_2x_{\{a_4\}}).
\end{align}
Finally, we can proceed in a similar fashion in order to remove the mixed terms in the product expression.
\begin{align*}
&(u_1 + x_{\{a_4\}})(u_1v_2 + (u_3 - e_3) + u_2x_{\{a_4\}}).
\end{align*}
Thus, $\Inv (W(E_7), \CM_\ast)$ is completely decomposable with basis $\{\wt{w_d}\}_{d \le 7}\cup\{f_3, f_3 \wt{w_1}\}$.

\medbreak

\subsection{Type $E_8$}
Up to conjugacy, $P:= P(a_1, b_1, a_2, b_2, a_3, b_3, a_4, b_4)$ is the unique maximal elementary abelian subgroup generated by reflections in $W(E_8)$. By the same arguments as in the $E_6/E_7$-case, we obtain that the restriction map $\Inv (W(E_8), \CM_\ast) \to \Inv (W(D_8), \CM_\ast)$ is injective. We first recall that $\Inv (W(D_8), \CM_\ast)$ is a free $\CM_\ast(\gk)$-module with the basis
$$\{1, u_1, u_2, v_2, u_3, v_2u_1,e_4, v_4, (u_4 - e_4), v_2u_2, v_2u_3, v_4u_1, v_4u_2, v_6, v_6u_1, v_8\}.$$
Again, we define $g \in W(E_8)$ as in the $E_6$ or $E_7$-case and check that it normalizes $P$:
\begin{align*}
	g s_{a_1} g^{-1}& = s_{b_2},&
	g s_{b_1} g^{-1} = s_{b_1}, \qquad &
	g s_{a_2} g^{-1} = s_{a_2},&
g s_{b_2} g^{-1} = s_{a_1},\\
	g s_{a_3} g^{-1}& = s_{a_3},&
	g s_{b_3} g^{-1} = s_{a_4}, \qquad &
	g s_{a_4} g^{-1} = s_{b_3}, &
g s_{b_4} g^{-1} = s_{b_4}.
\end{align*}
The action of $g$ on a $P$-torsor $(\a_1, \b_1, \a_2, \b_2, \a_3, \b_3, \a_4, \b_4)$ is thus given by swapping $\a_1 \lra \b_2$, $\b_3 \lra\a_4$ while leaving $\b_1, \a_2, \a_3, \b_4$ fixed. Again, applying the same kind of arguments as in the $E_6$-case, we see that the image of the restriction map $\Inv (W(E_8), \CM_\ast) \to \Inv (W(D_8), \CM_\ast)$ is contained in the free submodule with basis
$$\{1, u_1, u_2 + v_2, u_3+ v_2u_1,e_4 + v_4, (u_4 - e_4) + v_2u_2, v_2u_3 + v_4u_1, v_4u_2 + v_6, v_6u_1, v_8\}.$$
We need to construct $W(E_8)$-invariants mapping to these basis elements. On the one hand, the inclusion $W(E_8)\ss O_8$ gives modified Stiefel-Whitney classes $\wt{w_d} \in \Inv^d(W(E_8), \MK_\ast)$. Again,
\begin{alignat*}3
	\res^P_{W(E_8)}(\wt{w_1})& = \res^P_{W(D_8)}(u_1),&&
	\hspace{-1.2cm}\res^P_{W(E_8)}(\wt{w_5}) = \res^P_{W(D_8)}(v_2u_3 + v_4u_1),\\
	\res^P_{W(E_8)}(\wt{w_2}) &= \res^P_{W(D_8)}(u_2 + v_2),&&
	\hspace{-.8cm}\res^P_{W(E_8)}(\wt{w_6}) = \res^P_{W(D_8)}(v_4u_2 + v_6),\\
	\res^P_{W(E_8)}(\wt{w_3}) &= \res^P_{W(D_8)}(u_3 + u_1v_2)	,&&
	\res^P_{W(E_8)}(\wt{w_7}) = \res^P_{W(D_8)}(v_6u_1),\\
	\res^P_{W(E_8)}(\wt{w_4}) &= \res^P_{W(D_8)}(u_4 + u_2v_2 + v_4),\quad&&
	\hspace{.4cm}\res^P_{W(E_8)}(\wt{w_8}) = \res^P_{W(D_8)}(v_8).
\end{alignat*}
The situation is very similar to the $E_7$-case except that now, we miss a basis invariant in degree $4$. Let $U\ss W(E_8)$ be the subgroup generated by the reflections at
$$
\{e_1 + e_2, e_2 - e_3, e_3 - e_4, e_4 - e_5, e_5 - e_6, e_6 - e_7, e_7 - e_8\}.
$$
By observing that $U\cong S_8$ or by using a computational algebra software, we conclude $|U\backslash W(E_8)| = 17280$. As in the $E_7$-case, we obtain a map $W(E_8) \to S_{17280} \to O_{17280}$. Again, we need the following lemma.
\begin{lemma}
\label{e8GapLem}
Let $k \in \mc F_{k_0}$ and $y \in H^1(k, P)$ be a $P$-torsor. Let $q_y$ be the quadratic form induced by $y$ under the composition $P \to W(E_8) \to S_{17280} \to O_{17280}$. Then, the image of $q_y$ in $W(k)$ is contained in $I^4(k)$.
\end{lemma}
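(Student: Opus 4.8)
My plan is to mirror the orbit-decomposition argument used to construct $e_m$ in Section~\ref{DnSubSect}, now applied to the permutation action of $P$ on the coset set $X:=U\backslash W(E_8)$. The crucial structural feature is that $P\cong(\Z/2)^8$ is abelian, so inside a single $P$-orbit all point stabilizers coincide. Hence each orbit $\mc O$ carries a well-defined stabilizer $H_{\mc O}=P\cap g^{-1}Ug$ (for any representative $Ug\in\mc O$), the quotient $P/H_{\mc O}$ acts simply transitively on $\mc O$, and $|\mc O|=2^{r_{\mc O}}$ with $r_{\mc O}:=\dim_{\Z/2}(P/H_{\mc O})$.

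First I would identify the form contributed by each orbit. Since the standard scalar product makes the coordinate subspaces indexed by distinct orbits mutually orthogonal, the orbit decomposition induces $q_y\cong\bigoplus_{\mc O}q_{\mc O}$ exactly as in the $D_n$-case, where $q_{\mc O}$ is the form attached to the simply transitive $(P/H_{\mc O})$-set $\mc O$. Choosing a basis of $P/H_{\mc O}$ represented by reflections and invoking Lemma~\ref{pfisterLemma} together with its remark (any two simply transitive $(\Z/2)^{r}$-actions on $2^{r}$ points are conjugate in $S_{2^{r}}$), each $q_{\mc O}$ is a scaled Pfister form $\lan 2^{r_{\mc O}}\ran\otimes\lan\lan-\gamma_1\ran\ran\otimes\cdots\otimes\lan\lan-\gamma_{r_{\mc O}}\ran\ran$ in the components $\gamma_i$ of $y$; consequently $q_{\mc O}\in I^{r_{\mc O}}(k)$, the scaling by $\lan 2^{r_{\mc O}}\ran$ being harmless because $I^{r_{\mc O}}$ is an ideal of $W(k)$.

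This reduces the lemma to the single claim that $r_{\mc O}\ge 4$ for every orbit, i.e.\ that $\mathrm{rank}(H_{\mc O})\le 4$. The decisive observation is that $H_{\mc O}=P\cap g^{-1}Ug$ is an elementary abelian $2$-group contained in the conjugate $g^{-1}Ug\cong U\cong S_8$, and every elementary abelian $2$-subgroup of $S_8$ has rank at most $4$ (four disjoint transpositions being optimal). Hence $\mathrm{rank}(H_{\mc O})\le 4$ automatically, so $r_{\mc O}=8-\mathrm{rank}(H_{\mc O})\ge 4$ and $q_{\mc O}\in I^{r_{\mc O}}(k)\subseteq I^4(k)$; summing over the orbits in $W(k)$ gives $q_y\in I^4(k)$, in complete analogy with Lemma~\ref{pactLem} and Lemma~\ref{e7GapLem}. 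The real work is thus not the rank bound but the bookkeeping of the reduction: verifying that the $P$-orbits on the $17280$ cosets give precisely the asserted scaled Pfister forms. Unlike the transparent situation of Lemma~\ref{pactLem}, a closed-form description of these orbits is unwieldy, so I expect this to be the main obstacle and would settle it (together with the explicit restriction to $P$ of the degree-$4$ invariant constructed afterwards) with a computational algebra system, as is done for $E_7$ in the appendix.
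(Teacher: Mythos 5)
Your argument is correct, and it takes a genuinely different route: the paper's proof of Lemma \ref{e8GapLem} is entirely computational (the {\tt GAP} procedure {\tt fullCheck} enumerates the $P$-orbits on $U\backslash W(E_8)$, finds for each orbit a coordinate subgroup $P(A_k)$ acting simply transitively while the complementary coordinate subgroup acts trivially, and verifies $|A_k|\ge 4$), whereas you eliminate the computer entirely. Your reduction to orbits is the same as in Section \ref{DnSubSect}: since $P$ is abelian, each orbit has a well-defined stabilizer $H_{\mc O}=P\cap g^{-1}Ug$, the quotient $P/H_{\mc O}$ acts simply transitively, and Lemma \ref{pfisterLemma} together with the remark following it (conjugation by a permutation matrix, a $\gk$-point of the orthogonal group, does not change the induced map on $H^1$) shows that $q_{\mc O}$ is $\lan 2^{r_{\mc O}}\ran$ times an $r_{\mc O}$-fold Pfister form, hence lies in the ideal $I^{r_{\mc O}}(k)$. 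The genuinely new ingredient is your rank bound: $H_{\mc O}$ is an elementary abelian $2$-subgroup of $g^{-1}Ug\cong S_8$, and such subgroups have rank at most $4$ (a transitive abelian permutation group is regular, so the image on an orbit of size $2^{k}$ has rank exactly $k$, and faithfulness bounds the total rank by the maximum of $\sum_i k_i$ over partitions $8=\sum_i 2^{k_i}$, namely $4$). Hence $r_{\mc O}\ge 4$ for every orbit with no computation; the same argument even yields Lemma \ref{e7GapLem} for free, since for $E_7$ one has $U\cong S_6\times\Z/2$, whose elementary abelian $2$-subgroups also have rank at most $4$, giving $I^3$ there. What your argument does not deliver, and where the paper still needs {\tt GAP}, is the statement after the lemma: the identification $\ms{res}^P_{W(E_8)}(f_4)=\ms{res}^P_{W(D_8)}(v_2u_2+(u_4-e_4))$ requires knowing that each stabilizer is a \emph{coordinate} subgroup, so that the quotient torsor components are individual $\a_i,\b_i$ rather than products of them. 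Accordingly, your closing hedge is unnecessary for the lemma itself: the Pfister shape of each $q_{\mc O}$ already follows from the abelian/simply-transitive argument you gave, so no computer verification is needed here at all --- only for the explicit restriction of $f_4$, which is a separate claim.
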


\begin{proof}
Again, this can be checked by a computational algebra software, see the appendix.
\end{proof}

\smallbreak

\noindent
As in the $D_n$-case, we obtain from this an invariant $f_4 \in \Inv^4(W(E_8), \MK_\ast)$. More precisely, if $y$ is a $W(E_8)$-torsor and $q_y$ is the quadratic form induced by $y$ under the composition $W(E_8) \to S_{17280} \to O_{17280}$, then the image of $q_y$ in $W(k)$ is contained in $I^4(k)$ and we define $f_4'(y):= e_4(q_y)$. We then proceed as in the $E_7$-case and set
$$f_4 := f_4' - \sum_{\ell \le 3}m_\ell \wt{w_\ell}$$
for suitable $m_\ell \in \MK_\ell(4 - \ell)$ in order to strip off the mixed contributions from $f_4'$.

\smallbreak

The restriction of $f_4$ to $P$ is determined through a computational algebra system, see the appendix. The result is 
$
\ms{res}^P_{W(D_8)}(v_2u_2 + (u_4 - e_4)).
$
Thus, we conclude that $\Inv (W(E_8), \CM_\ast)$ is completely decomposable with basis
$
\{f_4\}\cup\{\wt{w_d}\}_{d \le 8}.
$

\medbreak

\section{Appendix A -- Excerpts from a letter by J.-P.~Serre}
\label{serreSec}

[...] Hence, the only technical point which remains is the ``splitting principle'': if the restrictions of an invariant to every cube is 0, the invariant is 0. In your text with Gille, you prove that result under the restrictive condition that the characteristic $p$ does not divide the order $|G|$ of the group $G$. The proof you give (which is basically the same as in my UCLA lectures) is based on the fact that the polynomial invariants of $G$ (in its natural representation) make up a polynomial algebra; in geometric language, the quotient ${\rm Aff}^n/G$ is isomorphic to ${\rm Aff}^n$. This is OK when $p$ does not divide $|G|$, but it is also true in many other cases. For instance, it is true for all $p$ $(\ne 2)$ for the classical types (provided, for type $A_n$, that we choose for lattice the natural lattice for $GL_{n+1}$, namely $\mathbb Z^{n+1}$). For types $G_2, F_4, E_6, E_7$, it is true if $p > 3$ and for $E_8$ it is true for $p > 5$: this is not easy to prove, but it has been known to topologists since the 1950's (because the question is related to the determination of the mod $p$ cohomology of the corresponding compact Lie groups). When I started working on these questions, I found natural to have to exclude, for instance, the characteristics 3 and 5 for $E_8$. It is only a few years ago that I realized that even these small restrictions are unnecessary: the splitting principle holds for every $p > 2$.

I have sketched the proof in my Oberwolfach report: take for instance the case of $E_8$; the group $G = W(E_8)$ contains $W(D_8)$ as a subgroup of odd index, namely 135; moreover, the reflections of $W(D_8)$ are also reflections of $W(E_8)$; hence every cube of $W(D_8)$ is a cube of $W(E_8)$; if a cohomological invariant of $W(E_8)$ gives 0 over every cube, its restriction to $W(D_8)$ has the same property, hence is 0 because $D_8$ is a classical type; since the index of $W(D_8)$ is odd, then this invariant is 0. It is remarkable that a similar proof works in every other case. [...]

%%%%%%%%%%%%%%%%%%%%%%%%%%%%%%%%%%%%%%%%%%%%%%%%%%%%%%%%%%%%%
%%%CHEVIE computations
%%%%%%%%%%%%%%%%%%%%%%%%%%%%%%%%%%%%%%%%%%%%%%%%%%%%%%%%%%%%%
\section{Appendix B -- Computations for $E_7$ and $E_8$}
\label{apdxSec}

\noindent
For the computations involving $E_7$ and $E_8$, we use the computational algebra system {\tt GAP} and the {\tt GAP}-package {\tt CHEVIE} \cite{CH}. The complete source code used for the proof of Lemmas \ref{e7GapLem} and \ref{e8GapLem} together with detailed instructions on how to reproduce the results are provided on the author's GitHub page:\, {\tt https://github.com/Christian-Hirsch/orbit-e78}.

%%%%%%%%%%%%%%%%%%%%%%%%%%%%%
%%%E8 Section
%%%%%%%%%%%%%%%%%%%%%%%%%%%%%
\subsection{Computations concerning $W(E_7)$}
\label{e7Apdx}

The proof of Lemma \ref{e7GapLem} requires detailed information on the action of $P$ on $U\backslash W(E_7)$. We analyze this action, via the procedure {\tt fullCheck(7, U, P)}. 

First, {\tt fullCheck(7, U, P)} computes the action of $P$ on $U\backslash W(E_7)$ and also its orbits $\Oc_1, \dots, \Oc_r$.
Then, for each orbit $\Oc_k$, it determines a subset $A_k \ss \{a_1, b_1, a_2, b_2, a_3, b_3, a_4\}$, such that
$P(\{a_1, b_1, a_2, b_2, a_3, b_3, a_4\}-A_k)$ acts trivially on $\Oc_k$ and such that $P(A_k)$ acts simply transitively
on $\Oc_k$. A priori, there is no reason that such a subset should exist; however -- as checked by the program --
it exists in the case we are considering. The return value of the procedure {\tt fullCheck} is an array whose $k$th entry is the set
$A_k$. Inspecting the return value reveals that each $A_k$ consists of at least 3 elements and that the subsets consisting of 3 elements have the desired form.

More precisely, to call {\tt fullCheck(7, U, P)}, we need to determine the indices of the roots generating $U$ and $P$.
In the following, the roots are expressed as linear combinations of the simple system of roots given by
$
v_1 = \tfrac12(e_1 - e_2 - e_3 - e_4 - e_5 - e_6 - e_7 + e_8)$, $v_2 = e_1 + e_2$, $v_i = e_{i - 1} - e_{i - 2}$, $3 \le i \le 7$.
Additionally, 
\begin{align*}
b_2& = v_2 + v_3 + 2v_4 + v_5\\
b_3& = v_2 + v_3 + 2v_4 + 2v_5 + 2v_6 + v_7\\
-a_4& = 2v_1 + 2v_2 + 3v_3 + 4v_4 + 3v_5 + 2v_6 + v_7
\end{align*}
We claim that $U$ and $P$ are represented by the indices $[2, 4, 5, 6, 7, 63]$ and $[3, 2, 5, 28, 7, 49, 63]$, respectively. This can be checked by printing the basis representation of the $E_7$ roots:

\noindent gap$>$ p: = [ 3, 2, 5, 28, 7, 49, 63 ]; \newline
gap$>$ for u in p do Print(CoxeterGroup("E", 7).roots[u]);Print("$\backslash$ n");od; \newline
[ 0, 0, 1, 0, 0, 0, 0 ] \newline
[ 0, 1, 0, 0, 0, 0, 0 ] \newline
[ 0, 0, 0, 0, 1, 0, 0 ] \newline
[ 0, 1, 1, 2, 1, 0, 0 ] \newline
[ 0, 0, 0, 0, 0, 0, 1 ] \newline
[ 0, 1, 1, 2, 2, 2, 1 ] \newline
[ 2, 2, 3, 4, 3, 2, 1 ] \newline

We can now call the {\tt fullCheck}-procedure.

\noindent gap$>$ Aks: = fullCheck(7, [2, 4, 5, 6, 7, 63], [3, 2, 5, 28, 7, 49, 63]); \newline

Verifying that all $\{A_k\}_{k \le r}$ consist of at least 3 elements can be achieved via the command

\noindent gap$>$ for Ak in Aks do if Length(Ak)$ < $3 then Print("Fail");fi;od; \newline

To see that those $A_k$ with $|A_k| = 3$ correspond precisely to the elements
 \begin{align*}
	 \{(A, B, C) \in \La_3:\, |C| = 1\} &\cup \{(A, B, \es) \in \La_3:\, |A|\text{ odd}\} \\
	 &\cup \{(A, B, \es, a_4):\, (A, B, \es) \in \La_2\},
 \end{align*}
we use the {\tt e7Correct}-procedure. It checks that the $\{A_k\}_{k \le r}$ do not contain elements which
are not in the claimed set above. Since there are precisely 28 $A_k$ with 3 elements, which is precisely the cardinality of the above set, this reasoning yields the claimed description.

\noindent gap$>$ Y: = Filtered(Aks, Ak-$>$ Length(Ak)$ < $4);
 \newline
\noindent gap$>$ e7Correct(Y);

%%%%%%%%%%%%%%%%%%%%%%%%%%%%
%%%E8 Section
%%%%%%%%%%%%%%%%%%%%%%%%%%%%%
\subsection{Computations concerning $W(E_8)$}
\label{e8Apdx}

Since the arguments are very similar to the $E_7$-case, we only explain the most important changes. First, we consider the maximal elementary abelian subgroup generated by reflections $P = P(a_1, b_1, a_2, b_2, a_3, b_3, a_4, b_4)$ and the subgroup
$$U = \lan s_{e_1 + e_2}, s_{e_2 - e_3}, s_{e_3 - e_4}, s_{e_4 - e_5}, s_{e_5 - e_6}, s_{e_6 - e_7}, s_{e_7 - e_8}\ran.$$ 
In addition to the computations provided in Appendix \ref{e7Apdx}, we note that
$$b_4 = 2v_1 + 3v_2 + 4v_3 + 6v_4 + 5v_5 + 4v_6 + 3v_7 + 2v_8.$$
Then, $P$ and $U$ are represented by the indices $[3, 2, 5, 32, 7, 61, 97, 120]$ and $[2, 4, 5, 6, 7, 8, 97]$:

\noindent gap$>$ a: = [3, 2, 5, 32, 7, 61, 97, 120]; \newline
[ 3, 2, 5, 32, 7, 61, 97, 120 ] \newline
gap$>$ for u in a do Print(CoxeterGroup("E", 8).roots[u]); Print("$\backslash$ n"); od; \newline
[ 0, 0, 1, 0, 0, 0, 0, 0 ] \newline
[ 0, 1, 0, 0, 0, 0, 0, 0 ] \newline
[ 0, 0, 0, 0, 1, 0, 0, 0 ] \newline
[ 0, 1, 1, 2, 1, 0, 0, 0 ] \newline
[ 0, 0, 0, 0, 0, 0, 1, 0 ] \newline
[ 0, 1, 1, 2, 2, 2, 1, 0 ] \newline
[ 2, 2, 3, 4, 3, 2, 1, 0 ] \newline
[ 2, 3, 4, 6, 5, 4, 3, 2 ] \newline

To understand the orbit structure, we proceed as in the $E_7$-case:

\noindent gap$>$ Aks: = fullCheck(8, [2, 4, 5, 6, 7, 8, 97], [3, 2, 5, 32, 7, 61, 97, 120]); \newline
\noindent gap$>$ for Ak in Aks do if Length(Ak)$ < $4 then Print("Fail");fi;od; \newline
\noindent gap$>$ Y: = Filtered(Aks, Ak-$>$Length(Ak)$ < $5); \newline
\noindent gap$>$ e8Correct(Y);

\bibliographystyle{amsa}

\addresseshere

\end{document}